\newtheorem{definition}{Definition}[section]
\newtheorem{theorem}[definition]{Theorem}
\newtheorem{lemma}[definition]{Lemma}
\newtheorem{remark}[definition]{Remark}
\newtheorem{problem}[definition]{Problem}
\newtheorem{note}[definition]{Note}
\newtheorem{proposition}[definition]{Proposition}
\begin{document}
\title{\bf 
The compact presentation for the\\
alternating central extension \\
 of the $q$-Onsager algebra}
\author{
Paul Terwilliger 
}
\date{}

\maketitle
\begin{abstract}
 The $q$-Onsager algebra $O_q$ is defined by two generators and  two relations, called the $q$-Dolan/Grady relations.
 We investigate the alternating central extension $\mathcal O_q$ of $O_q$. The algebra $\mathcal O_q$ was introduced by 
Baseilhac and Koizumi, who called it the current algebra of $O_q$. Recently
 Baseilhac and Shigechi gave a presentation of $\mathcal O_q$ by generators and relations. 
 The presentation is attractive, but the multitude of generators and relations makes
the presentation unwieldy. In this paper we obtain a presentation of $\mathcal O_q$
 that involves a subset of the original set of generators and a very  manageable set of relations. We call this presentation the
 compact presentation of $\mathcal O_q$. This presentation resembles the compact presentation of
 the alternating central extension for the positive part of $U_q(\widehat{\mathfrak{sl}}_2)$.
\bigskip

\noindent
{\bf Keywords}. $q$-Onsager algebra; $q$-Dolan/Grady relations; tridiagonal pair.
\hfil\break
\noindent {\bf 2020 Mathematics Subject Classification}. 
Primary: 17B37. Secondary: 05E14, 81R50.

 \end{abstract}
 
 \section{Introduction}
 \noindent We will be discussing the $q$-Onsager algebra $O_q$  \cite{bas1, qSerre}.
 This infinite-dimensional associative algebra is defined by two generators $W_0$, $W_1$ and  two relations, called the $q$-Dolan/Grady relations:
\begin{align*}
&\lbrack W_0, \lbrack W_0, \lbrack W_0, W_1\rbrack_q \rbrack_{q^{-1}} \rbrack =(q^2 - q^{-2})^2 \lbrack W_1, W_0 \rbrack,
\\
&\lbrack W_1, \lbrack W_1, \lbrack W_1, W_0\rbrack_q \rbrack_{q^{-1}}\rbrack = (q^2-q^{-2})^2 \lbrack W_0, W_1 \rbrack.
\end{align*}

\noindent
The algebra $O_q$ first appeared in algebraic combinatorics, in the study of association schemes that
are $P$-polynomial and $Q$-polynomial;
to our knowledge the $q$-Dolan/Grady relations first appeared in \cite[Lemma~5.4]{tersub3}. Discussions of the history can be found in
\cite{ito, qSerre, TD00}.
The algebra $O_q$ is the ``most general'' example of a tridiagonal algebra
\cite[Definition~3.9]{qSerre}.
In the topic of linear algebra, there is an object called a tridiagonal pair
\cite[Definition~1.1]{TD00}.
A finite-dimensional irreducible $O_q$-module is essentially the same thing as a tridiagonal pair of $q$-Racah type \cite[Theorem~3.10]{qSerre}. 
These tridiagonal pairs are classified up to isomorphism in \cite[Theorem~3.3]{ItoTer}. For more information about tridiagonal pairs, see \cite{ItoTerAug}, \cite[Section~19]{tbTD} and the references therein.
\medskip

\noindent  Over time the algebra $O_q$ found many applications in mathematics and physics; some examples are given below.
 The algebra  $O_q$ is used 
 to study boundary 
integrable systems 
\cite{
bas2,
bas1,
bas8,
basXXZ,
basBel,
BK05,
bas4,
basKoi,
basnc}.
The algebra $O_q$  can be realized as a left or right coideal subalgebra of the quantized enveloping algebra
$U_q(\widehat{\mathfrak{sl}}_2)$; see \cite{bas8, basXXZ, kolb}. The algebra $O_q$ is the simplest example of a quantum symmetric pair coideal subalgebra 
of affine type \cite[Example~7.6]{kolb}.
 In \cite{BK},
 two automorphisms of $O_q$ are introduced that resemble the Lusztig automorphisms of $U_q(\widehat{\mathfrak{sl}}_2)$; see also 
\cite{lusztigaut, z2z2z2}. These automorphisms are used in \cite{BK} to obtain a PBW basis for $O_q$, and they are used in \cite{diagram} to describe the Bockting double lowering operator
\cite{bockting, bocktingQexp} of a tridiagonal pair.
A Drinfeld type presentation of $O_q$ is obtained in \cite{LuWang}, and this  is used in \cite{LRW} to realize $O_q$ as an $\iota$Hall algebra of the projective line.
There is an algebra  homomorphism from  $O_q$ into the universal Askey-Wilson algebra 
\cite[Sections~9,10]{uaw}, \cite{pbw}. 
In \cite[Section~4]{basXXZ} some infinite-dimensional $O_q$-modules are constructed using $q$-vertex operators. 
\medskip

\noindent
 In \cite{BK05} Baseilhac and Koizumi introduce a current algebra  for $O_q$, in order to solve boundary integrable systems with hidden symmetries. We will denote this current algebra by $\mathcal O_q$.
In \cite[Definition~3.1]{basnc} Baseilhac and Shigechi give a presentation of $\mathcal O_q$ by generators and relations. 
By \cite[Theorem~9.14]{pbwqO} $\mathcal O_q$ is isomorphic to  $O_q\otimes  \mathbb F \lbrack z_1, z_2, \ldots \rbrack$,
where $\mathbb F$ is the ground field and $\lbrace z_n \rbrace_{n=1}^\infty$ are mutually commuting indeterminates.
Thus $\mathcal O_q$ is related to $O_q$ in roughly the same way that $\mathcal U^+_q$ is related to $U^+_q$,
where $U^+_q$ is the positive part of  $U_q(\widehat{\mathfrak{sl}}_2)$ \cite{damiani}
and $\mathcal U^+_q$ is 
 the alternating central extension of $U^+_q$ \cite{altCE}.
Motivated by this and as explained in \cite{pbwqO}, 
we call $\mathcal O_q$  the alternating central extension of $O_q$.  
\medskip

\noindent  The following recent results and conjectures are motivated by a  rough analogy between the pair $U^+_q$, $\mathcal U^+_q$ and the pair $O_q$, $\mathcal O_q$. 
In \cite{alternating}
we introduce the alternating elements in $U^+_q$, and show how they are related to the PBW basis for $U^+_q$ due to Damiani \cite{damiani}.
In \cite{conj} we conjecture the existence of some elements in $O_q$ that are similarly related to the  PBW basis for $O_q$ given in \cite{BK}.
In \cite{altCE} we show that for $\mathcal U^+_q$, the generators in the defining presentation give a PBW basis.
In \cite{pbwqO} we establish the corresponding result for $\mathcal O_q$.
In \cite[Theorem~3.1]{FMA} the algebra $\mathcal U^+_q$ is shown to have a  Freidel-Maillet presentation. The corresponding result for $\mathcal O_q$ is that it admits a presentation as a reflection algebra, and this is established in
 \cite[Theorem~2]{basnc}.
The algebra $U^+_q$ is the positive part of $U_q(\widehat{\mathfrak{sl}}_2)$, so of course there exists an injective algebra homomorphism
 $U^+_q\to U_q(\widehat{\mathfrak{sl}}_2)$. In \cite[Proposition~5.18]{FMA} Baseilhac gives a similar injective   algebra 
homomorphism $\mathcal U^+_q \to U_q(\widehat{\mathfrak{gl}}_2)$.
We mentioned earlier that
$O_q$  can be realized as a coideal subalgebra of 
$U_q(\widehat{\mathfrak{sl}}_2)$. We expect that $\mathcal O_q$ can be realized as a coideal
subalgebra of
$U_q(\widehat{\mathfrak{gl}}_2)$ in a similar way, and we will try to establish this in the future. 
\medskip




\noindent In  \cite[Definition~3.1]{compactUqp} we introduced the compact presentation of $\mathcal U^+_q$. In \cite[Section~7]{compactUqp} 
we described some features of $\mathcal U^+_q$ that are illuminated by the compact presentation.
In the present paper we do something similar for $\mathcal O_q$.
\medskip

\noindent Our results are summarized as follows.
The presentation of $\mathcal O_q$ from \cite[Definition~3.1]{basnc} is
reproduced in Definition \ref{def:Aq}
below.
The presentation is attractive, but the
 multitude of generators and relations makes the presentation unwieldy. In our first main result Theorem \ref{thm:m1Int}, we obtain a presentation of $\mathcal O_q$
 that involves a  subset of the original set of generators and a very manageable set of relations. We call this presentation the compact presentation of $\mathcal O_q$. 
 In our second main result  Theorem \ref{thm:m2}, we factor the vector space $\mathcal O_q$ into a tensor product of two subalgebras.
 \begin{theorem} \label{thm:m1Int}
 The algebra $\mathcal O_q$  has a presentation by generators 
$\mathcal W_0$, $\mathcal W_1$, $\lbrace \mathcal {\tilde G}_{k+1} \rbrace_{k \in \mathbb N}$ and relations
\begin{enumerate}
\item[\rm (i)]
$\lbrack \mathcal W_0, \lbrack \mathcal W_0, \lbrack \mathcal W_0, \mathcal W_1\rbrack_q \rbrack_{q^{-1}} \rbrack =(q^2-q^{-2})^2 \lbrack \mathcal W_1, \mathcal W_0 \rbrack$;
\item[\rm (ii)]
$\lbrack \mathcal W_1, \lbrack \mathcal W_1, \lbrack \mathcal W_1, \mathcal W_0\rbrack_q \rbrack_{q^{-1}}\rbrack = (q^2-q^{-2})^2  \lbrack \mathcal W_0, \mathcal W_1 \rbrack$;
\item[\rm (iii)] 
$\lbrack \mathcal W_0, \mathcal {\tilde G}_1 \rbrack = 
\lbrack \mathcal W_0, \lbrack \mathcal W_0, \mathcal W_1 \rbrack_q \rbrack$;
\item[\rm (iv)] $\lbrack \mathcal {\tilde G}_1, \mathcal W_1 \rbrack = 
\lbrack \lbrack \mathcal W_0, \mathcal W_1 \rbrack_q, \mathcal W_1 \rbrack$;
\item[\rm (v)] 
for $k\geq 1$,
\begin{align*}
&\lbrack \mathcal {\tilde G}_{k+1}, \mathcal W_0 \rbrack = 
\frac{
\lbrack \mathcal W_0, \lbrack \mathcal W_0, \lbrack \mathcal W_1,
\mathcal {\tilde G}_k
\rbrack_q
\rbrack_q
\rbrack}{(q^2-q^{-2})^2};
\end{align*}
\item[\rm (vi)] for $k\geq 1$,
\begin{align*}
\lbrack \mathcal W_1, \mathcal {\tilde G}_{k+1}\rbrack = 
\frac{
\lbrack\lbrack \lbrack \mathcal {\tilde G}_k, \mathcal W_0 \rbrack_q, 
\mathcal W_1 \rbrack_q,
\mathcal W_1
\rbrack}{(q^2-q^{-2})^2};
\end{align*}
\item[\rm (vii)] for $k, \ell \in \mathbb N$,
\begin{align*}
\lbrack \mathcal {\tilde G}_{k+1}, \mathcal {\tilde G}_{\ell+1} \rbrack=0.
\end{align*}
\end{enumerate}
\end{theorem}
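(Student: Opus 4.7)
Let $\mathcal{A}$ denote the algebra defined by the presentation (i)--(vii). The plan is to construct mutually inverse algebra homomorphisms $\varphi : \mathcal{A} \to \mathcal{O}_q$ and $\psi : \mathcal{O}_q \to \mathcal{A}$. The map $\varphi$ sends the compact generators of $\mathcal{A}$ to the Baseilhac--Shigechi generators of $\mathcal{O}_q$ with the same names; to see it is well-defined, I would verify that relations (i)--(vii) hold in $\mathcal{O}_q$ (with the Baseilhac--Shigechi presentation of Definition~\ref{def:Aq}). The $q$-Dolan/Grady relations (i), (ii) are part of $\mathcal{O}_q$'s basic structure and are essentially recorded in \cite{pbwqO}, as is the mutual commutativity (vii) of the $\tilde{\mathcal{G}}_{k+1}$'s. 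Relations (iii), (iv) should read off as low-index instances of the standard Baseilhac--Shigechi identities. The more delicate pieces are (v), (vi): I would derive them by induction on $k$, using the Baseilhac--Shigechi recursions that express $\mathcal{W}_{-k}, \mathcal{W}_{k+1}$ and $\tilde{\mathcal{G}}_{k+1}$ in terms of one another, together with $q$-Jacobi manipulations.

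The harder direction is to construct $\psi$. Here I would define, inductively within $\mathcal{A}$, elements $\mathcal{W}_{-k}$, $\mathcal{W}_{k+1}$ ($k \geq 1$) and $\mathcal{G}_{k+1}$ ($k \geq 0$), patterned after the recursions that hold in $\mathcal{O}_q$ but with only the compact generators appearing on the right-hand side. With these auxiliary elements in hand, I must then verify every defining relation of the Baseilhac--Shigechi presentation in $\mathcal{A}$, using only (i)--(vii) and the inductive definitions. This is the step I expect to be the main obstacle: the Baseilhac--Shigechi relations form a long list of $q$-commutator identities indexed by pairs $(k,\ell) \in \mathbb{N}^2$, and each must be established by induction in an order that ensures only previously verified identities are invoked. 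Careful bookkeeping of iterated $q$-brackets, and probably several technical lemmas isolating commutator identities of the form $\lbrack \mathcal{W}_0, \lbrack \cdots \rbrack_q \rbrack_{q^{-1}}$, will be needed.

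Once $\varphi$ and $\psi$ are shown to be well-defined algebra homomorphisms, closing the loop is formal. The composition $\psi \circ \varphi$ is the identity on the generators of $\mathcal{A}$ and hence on all of $\mathcal{A}$. The composition $\varphi \circ \psi$ is the identity on $\mathcal{W}_0, \mathcal{W}_1, \tilde{\mathcal{G}}_{k+1}$ tautologically, and on the remaining Baseilhac--Shigechi generators by construction, since the recursive definitions used to build $\psi$ are matched on the $\mathcal{O}_q$ side. As a sanity check, the PBW theorem for $\mathcal{O}_q$ from \cite{pbwqO} could be used to confirm that the elements $\psi(\mathcal{W}_{-k}), \psi(\mathcal{W}_{k+1}), \psi(\mathcal{G}_{k+1})$ together with $\mathcal{W}_0, \mathcal{W}_1, \tilde{\mathcal{G}}_{k+1}$ exhaust $\mathcal{A}$, so no ``extra'' relations have been inadvertently imposed.
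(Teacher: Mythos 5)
Your easy direction (the surjection from the compactly presented algebra onto $\mathcal O_q$) matches the paper: relations (iii),(iv) come from eliminating $\mathcal G_1$ via \eqref{eq:3p1}, \eqref{eq:3p2}, and (v),(vi) come from eliminating $\mathcal G_{k+1}$ via \eqref{eq:3p1}--\eqref{eq:3p4} — no induction on $k$ is actually needed there. The problem is your reverse direction. You propose to define all the alternating generators inside $\mathcal A$ by the recursions \eqref{eq:WmkA}--\eqref{eq:getrid} and then verify the entire list of Baseilhac--Shigechi relations \eqref{eq:3p1}--\eqref{eq:3p11} from (i)--(vii) alone. You correctly identify this as the main obstacle, but you do not carry it out; you only say that it would require ``careful bookkeeping'' and ``several technical lemmas.'' That is the whole content of the theorem — (i)--(vii) is a tiny subset of the original relations, and the assertion that it implies the doubly indexed families \eqref{eq:3p4}--\eqref{eq:3p11} is precisely what must be proved. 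As written, the proposal is a plan with its central step missing, so there is a genuine gap.

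It is worth noting that the paper deliberately avoids this verification. Rather than building a two-sided inverse homomorphism, it compares the filtration $\lbrace \mathcal O^\vee_d\rbrace$ of the compactly presented algebra with the filtration $\lbrace \mathcal O_d\rbrace$ of $\mathcal O_q$: the surjection $\natural$ maps $\mathcal O^\vee_d$ onto $\mathcal O_d$ (Lemma \ref{thm:2}), and a merely $\mathbb F$-linear map $\varphi: O_q\otimes \mathbb F\lbrack z_1,z_2,\ldots\rbrack \to \mathcal O^\vee_q$ (built from the homomorphisms $\flat$ and $\sharp$, which require verifying only the $q$-Dolan/Grady relations and (vii)) maps $\phi^{-1}(\mathcal O_d)$ onto $\mathcal O^\vee_d$ (Propositions \ref{def:cOn}, \ref{def:AnV2}, \ref{prop:back}). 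This forces $\dim \mathcal O^\vee_d = \dim \mathcal O_d$ for all $d$, hence $\natural$ is injective. Because $\varphi$ need not be an algebra homomorphism, no relation-checking beyond (i)--(vii) is ever required; the combinatorial substitute is the straightening argument of Proposition \ref{def:AnV2}, which uses only the shape of relations (iii)--(vi). Your sanity check via the PBW basis also begs the question: the PBW theorem lives in $\mathcal O_q$ and says nothing about $\mathcal A$ until the isomorphism is established. To repair your argument you would either need to actually prove all of \eqref{eq:3p1}--\eqref{eq:3p11} in $\mathcal A$, or adopt a dimension/filtration comparison of the kind the paper uses.
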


\noindent The $\mathcal O_q$-generators in Theorem \ref{thm:m1Int} are called essential.
Before stating our second main result, we make a few definitions.
Let $\langle \mathcal W_0, \mathcal W_1 \rangle $ denote the subalgebra of $\mathcal O_q$ generated by $\mathcal W_0$, $\mathcal W_1$.
Let $\mathcal {\tilde G}$ denote the subalgebra of $\mathcal O_q$ generated by $\lbrace \mathcal {\tilde G}_{k+1} \rbrace_{k \in \mathbb N}$.
 Item (i) below appeared in \cite[Theorem~10.3]{pbwqO} and is included for completeness.

\begin{theorem}\label{thm:m2}
For the  algebra $\mathcal O_q$ the following {\rm (i)--(iv)} hold:
\begin{enumerate}
\item[\rm (i)] there exists an algebra isomorphism $O_q \to \langle \mathcal W_0, \mathcal W_1 \rangle $ that sends $W_0 \mapsto \mathcal W_0$ and
$W_1 \mapsto \mathcal W_1$;
\item[\rm (ii)] there exists an algebra isomorphism  $\mathbb F \lbrack z_1, z_2, \ldots \rbrack \to \mathcal {\tilde G}$ that sends $z_n \mapsto \mathcal {\tilde G}_n$ for $n\geq 1$;
\item[\rm (iii)] the multiplication map
\begin{align*}
\langle  \mathcal W_0, \mathcal W_1\rangle \otimes \mathcal {\tilde G} &\to
	       \mathcal O_q 
	       \\
w \otimes g &\mapsto      wg            
\end{align*}
is an isomorphism of vector spaces;
\item[\rm (iv)] the multiplication map
\begin{align*}
  \mathcal {\tilde G} \otimes
\langle  \mathcal W_0, \mathcal W_1\rangle
 &\to
	       \mathcal O_q 
	       \\
g \otimes w &\mapsto      gw           
\end{align*}
is an isomorphism of vector spaces.
\end{enumerate}
\end{theorem}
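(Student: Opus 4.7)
\noindent\textbf{Proof plan for Theorem \ref{thm:m2}.}
The strategy is to combine the algebra isomorphism $\Phi \colon \mathcal O_q \to O_q \otimes \mathbb F[z_1, z_2, \ldots]$ of \cite[Theorem~9.14]{pbwqO}, the compact presentation of Theorem \ref{thm:m1Int}, and the identification $\langle \mathcal W_0, \mathcal W_1\rangle \cong O_q$ from \cite[Theorem~10.3]{pbwqO}. Part (i) is quoted from the latter.

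For part (ii), relation (vii) says that the elements $\tilde{\mathcal G}_{k+1}$ pairwise commute, so the assignment $z_n \mapsto \tilde{\mathcal G}_n$ extends to a surjective algebra homomorphism $\varphi \colon \mathbb F[z_1, z_2, \ldots] \to \tilde{\mathcal G}$. To prove $\varphi$ injective, I would introduce a filtration on $\mathcal O_q$ assigning $\tilde{\mathcal G}_{k+1}$ degree $k+1$ and $\mathcal W_0, \mathcal W_1$ degree $0$, and verify that $\Phi(\tilde{\mathcal G}_n) = c_n (1 \otimes z_n) + (\textrm{strictly lower-filtration terms})$ for some nonzero scalar $c_n$. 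Distinct monomials in the $\tilde{\mathcal G}_n$ would then have distinct nonzero leading symbols in the associated graded algebra, forcing algebraic independence.

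For part (iii), surjectivity of the multiplication map proceeds by a rewriting argument. By Theorem \ref{thm:m1Int}, $\mathcal O_q$ is spanned by monomials in $\mathcal W_0, \mathcal W_1, \tilde{\mathcal G}_1, \tilde{\mathcal G}_2, \ldots$, and within such a monomial one can rewrite $\mathcal W_i \tilde{\mathcal G}_{k+1} = \tilde{\mathcal G}_{k+1}\mathcal W_i - [\tilde{\mathcal G}_{k+1}, \mathcal W_i]$. By relations (iii)--(vi) the commutator either lies in $\langle \mathcal W_0, \mathcal W_1\rangle$ (when $k=0$) or contains a single $\tilde{\mathcal G}_k$ of strictly smaller index (when $k\geq 1$); in both cases its degree in the filtration above is strictly less than that of $\tilde{\mathcal G}_{k+1}$. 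A descending induction on this filtration puts every monomial in the normal form $w g$ with $w\in \langle \mathcal W_0, \mathcal W_1\rangle$ and $g\in \tilde{\mathcal G}$, proving surjectivity. Injectivity follows by a dimension count: via $\Phi$ together with parts (i)--(ii), both sides of the multiplication map have equal Hilbert series in each graded component.

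Part (iv) is then established by the symmetric argument, pushing every $\tilde{\mathcal G}_{k+1}$ to the \emph{left} of $\mathcal W_0$ and $\mathcal W_1$ using relations (iii)--(vi), and closing with the same dimension count. The main obstacle I anticipate is the leading-symbol calculation in part (ii), namely the explicit identification of $\Phi(\tilde{\mathcal G}_n)$ modulo lower-filtration corrections; this requires unpacking the construction of $\Phi$ in \cite{pbwqO}. Once that step is in hand, the rewriting induction in (iii)--(iv) and the Hilbert-series comparison are routine.
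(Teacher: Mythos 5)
Your plan is essentially the paper's own proof. Part (i) is quoted from \cite[Theorem~10.3]{pbwqO} (Lemma \ref{lem:iota}); parts (ii) and (iii) are deduced from the bijectivity of the linear map $w\otimes z\mapsto \flat(w)\sharp(z)$ on $O_q\otimes\mathbb F[z_1,z_2,\ldots]$ (Proposition \ref{prop:bij}); and the normal form $wg$ is produced by exactly the rewriting you describe, via the relations of the compact presentation (Proposition \ref{def:AnV2}). The leading-term identification of $\Phi(\mathcal{\tilde G}_n)$ that you flag as the main obstacle is precisely what Proposition \ref{def:cOn} imports from \cite{pbwqO}. Two adjustments to your plan: for the injectivity step in (iii) you need a filtration with \emph{finite-dimensional} components so that ``surjective between spaces of equal dimension'' forces injectivity; with your convention $\deg\mathcal W_0=\deg\mathcal W_1=0$ every filtration component contains all of $\langle\mathcal W_0,\mathcal W_1\rangle$ and the Hilbert-series comparison is not literally available. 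The paper instead assigns degree $1$ to $\mathcal W_0,\mathcal W_1$ and $2k+2$ to $\mathcal{\tilde G}_{k+1}$, and matches the resulting components against $\sum_k O_{d-2k}\otimes Z_k$. Finally, (iv) follows at once from (iii) by applying the antiautomorphism $\tau$ of Definition \ref{def:tauA}, which swaps $\mathcal W_0\leftrightarrow\mathcal W_1$ and fixes each $\mathcal{\tilde G}_{k+1}$, so the symmetric rewriting is unnecessary.
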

\medskip

\noindent The paper is organized as follows. Section 2 contains some preliminaries. In Section 3, we recall the algebra $O_q$ and describe its basic properties.
In Section 4, we describe the algebra $\mathcal O_q$ and its relationship  to $O_q$.
In Section 5, we introduce the essential generators for $\mathcal O_q$.
In Section 6, we first identify some relations in $\mathcal O_q$ that are satisfied by the essential generators. We then define an algebra $\mathcal O^\vee_q$ by generators and relations,
using the essential generators and the relations we identified. In Sections 7, 8 we describe a filtration of $\mathcal O_q$ from two points of view.
In Section 9 we describe a filtration of $\mathcal O^\vee_q$. In Section 10,  the above
filtrations  are used to show that $\mathcal O_q$, $\mathcal O^\vee_q$ are isomorphic, and  this fact is used to
prove Theorems \ref{thm:m1Int}, \ref{thm:m2}.
Section 11 contains some comments about $\mathcal O_q$ that are motivated by Theorem \ref{thm:m2}(iii),(iv).
In Section 12, we use an automorphism of $\mathcal O_q$ to obtain a variation on Theorems \ref{thm:m1Int}, \ref{thm:m2} and the results of Section 11.
In Section 13, we give some suggestions for future research.

 \section{Preliminaries}
 We now begin our formal argument.
Throughout  the paper, the following notational conventions are in effect.
Recall the natural numbers 
$\mathbb N =\lbrace 0,1,2,\ldots \rbrace$ and integers $\mathbb Z = \lbrace 0, \pm 1, \pm 2, \ldots \rbrace$.
 Let $\mathbb F$ denote a field. Every vector space and tensor product
 mentioned in this paper is over $\mathbb F$. Every algebra mentioned in this paper is associative, over $\mathbb F$, and 
has a multiplicative identity. 
 Let $\mathcal A$ denote an algebra. By an {\it automorphism} of $\mathcal A$
we mean an algebra isomorphism $\mathcal A\rightarrow \mathcal A$. The algebra $\mathcal A^{\rm opp}$ consists of the vector space $\mathcal A$ and the multiplication map $\mathcal A \times \mathcal A \rightarrow \mathcal A$, $(a,b)\to ba$.
By an {\it antiautomorphism} of $\mathcal A$ we mean an algebra isomorphism $\mathcal A \rightarrow \mathcal A^{\rm opp}$.

 \begin{definition}\rm 
(See \cite[p.~299]{damiani}.)
Let $ \mathcal A$ denote an algebra. A {\it Poincar\'e-Birkhoff-Witt} (or {\it PBW}) basis for $\mathcal A$
consists of a subset $\Omega \subseteq \mathcal A$ and a linear order $<$ on $\Omega$
such that the following is a basis for the vector space $\mathcal A$:
\begin{align*}
a_1 a_2 \cdots a_n \qquad n \in \mathbb N, \qquad a_1, a_2, \ldots, a_n \in \Omega, \qquad
a_1 \leq a_2 \leq \cdots \leq a_n.
\end{align*}
We interpret the empty product as the multiplicative identity in $\mathcal A$.
\end{definition}
 \begin{definition}\label{def:gr}\rm 
 A  {\it grading} of an algebra $\mathcal A$ is a sequence  $\lbrace \mathcal A_n \rbrace_{n \in \mathbb N}$ of subspaces of $\mathcal A$ such that
(i) $1 \in \mathcal A_0$; (ii) the sum $\mathcal A = \sum_{n \in \mathbb N} \mathcal A_n$ is direct; (iii) $\mathcal A_r \mathcal A_s \subseteq \mathcal A_{r+s} $ for $r,s\in \mathbb N$.
\end{definition}
\begin{definition} \label{def:filtration} \rm (See \cite[p.~202]{carter}.)
A {\it filtration} of an algebra $\mathcal A$ is a sequence  $\lbrace \mathcal A_n \rbrace_{n \in \mathbb N}$ of subspaces of $\mathcal A$ such that
(i) $1 \in \mathcal A_0$; (ii) $\mathcal A_{n-1} \subseteq \mathcal A_n$ for $n\geq 1$; (iii) $\mathcal A = \cup_{n \in \mathbb N} \mathcal A_n$;
(iv) $\mathcal A_r \mathcal A_s \subseteq \mathcal A_{r+s} $ for $r,s\in \mathbb N$.
\end{definition}
\begin{definition}\label{def:poly}\rm
Let $\lbrace z_n \rbrace_{n=1}^\infty$ denote mutually commuting indeterminates. Let $\mathbb F \lbrack z_1, z_2, \ldots \rbrack$ denote
the algebra consisting of the polynomials in $z_1, z_2, \ldots $ that have all coefficients in $\mathbb F$.
For notational convenience define $z_0=1$.
\end{definition}

 \noindent Fix a nonzero $q \in \mathbb F$
that is not a root of unity.
Recall the notation
\begin{eqnarray*}
\lbrack n\rbrack_q = \frac{q^n-q^{-n}}{q-q^{-1}}
\qquad \qquad n \in \mathbb N.
\end{eqnarray*}

\section{The $q$-Onsager algebra $O_q$}
In this section we recall the $q$-Onsager algebra $ O_q$.
\noindent For elements $X, Y$ in any algebra, define their
commutator and $q$-commutator by 
\begin{align*}
\lbrack X, Y \rbrack = XY-YX, \qquad \qquad
\lbrack X, Y \rbrack_q = q XY- q^{-1}YX.
\end{align*}
\noindent Note that 
\begin{align*}
\lbrack X, \lbrack X, \lbrack X, Y\rbrack_q \rbrack_{q^{-1}} \rbrack
= 
X^3Y-\lbrack 3\rbrack_q X^2YX+ 
\lbrack 3\rbrack_q XYX^2 -YX^3.
\end{align*}

\begin{definition} \label{def:U} \rm
(See \cite[Section~2]{bas1}, \cite[Definition~3.9]{qSerre}.)
Define the algebra $O_q$ by generators $W_0$, $W_1$ and relations
\begin{align}
\label{eq:qOns1}
&\lbrack W_0, \lbrack W_0, \lbrack W_0, W_1\rbrack_q \rbrack_{q^{-1}} \rbrack =(q^2 - q^{-2})^2 \lbrack W_1, W_0 \rbrack,
\\
\label{eq:qOns2}
&\lbrack W_1, \lbrack W_1, \lbrack W_1, W_0\rbrack_q \rbrack_{q^{-1}}\rbrack = (q^2-q^{-2})^2 \lbrack W_0, W_1 \rbrack.
\end{align}
We call $O_q$ the {\it $q$-Onsager algebra}.
The relations \eqref{eq:qOns1}, \eqref{eq:qOns2}  are called the {\it $q$-Dolan/Grady relations}.
\end{definition}
\begin{remark}\rm In \cite{BK} Baseilhac and Kolb define the $q$-Onsager algebra in a slightly more general way that involves two scalar parameters $c, q$. Our $O_q$ is their
$q$-Onsager algebra with $c=q^{-1}(q-q^{-1})^2$.
\end{remark}
\noindent We mention some symmetries of $O_q$. 

\begin{lemma}
\label{lem:aut} There exists an automorphism $\sigma$ of $O_q$ that sends $W_0 \leftrightarrow W_1$.
Moreover $\sigma^2 = 1$.
\end{lemma}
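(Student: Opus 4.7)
The plan is to use the universal property of the presentation of $O_q$ by generators and relations given in Definition~\ref{def:U}. To produce an algebra homomorphism $\sigma : O_q \to O_q$ it suffices to specify images of the generators $W_0, W_1$ that satisfy the two $q$-Dolan/Grady relations \eqref{eq:qOns1}, \eqref{eq:qOns2}.

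First I would define $\sigma$ on generators by $\sigma(W_0) = W_1$ and $\sigma(W_1) = W_0$. The key observation is that the two defining relations are interchanged by the substitution $W_0 \leftrightarrow W_1$: applying the substitution to \eqref{eq:qOns1} produces \eqref{eq:qOns2}, and vice versa. Since both of these relations already hold in $O_q$, the images $W_1, W_0$ of the generators satisfy the defining relations of $O_q$, so by the universal property $\sigma$ extends uniquely to an algebra homomorphism $O_q \to O_q$.

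Next I would verify that $\sigma^2 = \mathrm{id}$. The composition $\sigma^2$ is an algebra endomorphism of $O_q$ that fixes each of $W_0$ and $W_1$, and since $W_0, W_1$ generate $O_q$, this forces $\sigma^2 = \mathrm{id}$. In particular $\sigma$ is invertible, so it is an automorphism.

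There is no real obstacle here; the only thing to check carefully is the symmetry of the relations under the swap, which is immediate from the fact that \eqref{eq:qOns1} and \eqref{eq:qOns2} differ precisely by interchanging $W_0$ and $W_1$ (both sides).
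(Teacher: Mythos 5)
Your proof is correct and is the standard argument; the paper in fact states this lemma without proof, treating it as routine, and your verification (the two $q$-Dolan/Grady relations are interchanged by the swap $W_0 \leftrightarrow W_1$, so the universal property gives the endomorphism, and $\sigma^2$ fixes the generators hence equals the identity) is exactly the argument the paper implicitly relies on.
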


\begin{lemma}\label{lem:antiaut} {\rm (See \cite[Lemma~2.5]{z2z2z2}.)}
There exists an antiautomorphism $\dagger$ of $O_q$ that fixes each of $W_0$, $W_1$.
 Moreover $\dagger^2=1$.
\end{lemma}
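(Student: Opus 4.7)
The plan is to invoke the universal property of $O_q$ as a quotient of the free algebra on two generators by the $q$-Dolan/Grady relations \eqref{eq:qOns1}, \eqref{eq:qOns2}. An antiautomorphism $\dagger$ fixing $W_0,W_1$ is the same as an algebra homomorphism $O_q \to O_q^{\rm opp}$ sending $W_0 \mapsto W_0$, $W_1 \mapsto W_1$, so it suffices to check that $W_0,W_1 \in O_q^{\rm opp}$ satisfy the $q$-Dolan/Grady relations. Equivalently, interpreting this in $O_q$, I must show that reversing every multiplication in \eqref{eq:qOns1}, \eqref{eq:qOns2} produces an identity that is already a consequence of them.

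Here the key computational input is the expanded form
\[
\lbrack X,\lbrack X,\lbrack X,Y\rbrack_q\rbrack_{q^{-1}}\rbrack = X^3Y -\lbrack 3\rbrack_q X^2YX + \lbrack 3\rbrack_q XYX^2 - YX^3
\]
recalled in Section 3. Reversing the multiplicative order of each monomial turns this polynomial into its negative (the first and fourth terms swap, as do the middle two, and each pair contributes with opposite sign). Likewise $\lbrack W_1,W_0\rbrack$ becomes its negative under reversal. Thus applying the reversal to \eqref{eq:qOns1} gives $-1$ times \eqref{eq:qOns1}, which is equivalent to \eqref{eq:qOns1}. The same argument applies to \eqref{eq:qOns2}. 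Therefore the required homomorphism $\dagger:O_q \to O_q^{\rm opp}$ exists.

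Finally, $\dagger^2:O_q\to O_q$ is an algebra endomorphism that fixes the generators $W_0,W_1$, and since these generate $O_q$, we conclude $\dagger^2 = 1$. This also shows $\dagger$ is a bijection, hence an antiautomorphism.

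I do not expect any genuine obstacle: the whole argument reduces to the elementary sign check above, together with a standard universal-property argument. The only subtlety is being careful about how $\lbrack \,,\,\rbrack_q$ transforms under reversal (one has $qXY-q^{-1}YX \mapsto qYX-q^{-1}XY$, i.e.\ the $q$-bracket of the reversed word in the reversed order), but the expanded cubic form bypasses this by working directly with monomials.
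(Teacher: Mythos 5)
Your proof is correct and complete. The paper itself offers no proof of this lemma---it simply cites \cite[Lemma~2.5]{z2z2z2}---and your argument (universal property of the presentation, plus the observation that reversing each monomial in the expanded cubic $X^3Y-\lbrack 3\rbrack_q X^2YX+\lbrack 3\rbrack_q XYX^2-YX^3$ and in $\lbrack W_1,W_0\rbrack$ negates both sides of each $q$-Dolan/Grady relation) is exactly the standard verification, including the clean deduction of $\dagger^2=1$ and hence bijectivity from the fact that $\dagger^2$ fixes the generators.
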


\begin{lemma} The maps $\sigma$, $\dagger$ commute.
\end{lemma}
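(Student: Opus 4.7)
The plan is a standard generators-and-relations verification. Both $\sigma\circ\dagger$ and $\dagger\circ\sigma$ are maps $O_q\to O_q$, and I intend to show they coincide by checking (a) they are of the same type (both antiautomorphisms), and (b) they take the same values on the generators $W_0,W_1$. Since $O_q$ is generated by $W_0,W_1$, this forces equality.

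First I would verify the type. The map $\sigma$ is an algebra automorphism and $\dagger$ is an antiautomorphism, so for any $x,y\in O_q$,
\[
(\sigma\dagger)(xy)=\sigma\bigl(\dagger(y)\,\dagger(x)\bigr)=\sigma\dagger(y)\cdot\sigma\dagger(x),
\]
and similarly $(\dagger\sigma)(xy)=\dagger\sigma(y)\cdot\dagger\sigma(x)$. Both maps are bijections as compositions of bijections, so both $\sigma\dagger$ and $\dagger\sigma$ are antiautomorphisms of $O_q$.

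Next I would evaluate on the generators:
\[
\sigma\dagger(W_0)=\sigma(W_0)=W_1=\dagger(W_1)=\dagger\sigma(W_0),
\]
and symmetrically $\sigma\dagger(W_1)=W_0=\dagger\sigma(W_1)$. Because any antiautomorphism $\phi$ of $O_q$ satisfies $\phi(W_{i_1}W_{i_2}\cdots W_{i_n})=\phi(W_{i_n})\cdots\phi(W_{i_2})\phi(W_{i_1})$, its values on monomials in $W_0,W_1$ are determined by $\phi(W_0),\phi(W_1)$. Hence $\sigma\dagger$ and $\dagger\sigma$ agree on the spanning set of monomials, and therefore on all of $O_q$.

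The argument is entirely formal, so there is no real obstacle; the only point that requires minor attention is tracking that composing one automorphism with one antiautomorphism yields an antiautomorphism (not an automorphism), which is what lets the comparison on generators go through cleanly.
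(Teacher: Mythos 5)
Your proof is correct and is precisely the routine verification the paper has in mind when it says ``This is readily checked'': both composites are antiautomorphisms of $O_q$, and they agree on the generators $W_0,W_1$ (each sends $W_0\mapsto W_1$ and $W_1\mapsto W_0$), hence on all of $O_q$. No issues.
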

\begin{proof} This is readily checked.
\end{proof}

\begin{definition}\label{def:tau} \rm Let $\tau$ denote the composition of $\sigma$ and $\dagger$. Note that $\tau$ is an antiautomorphism of $O_q$ that sends
$W_0 \leftrightarrow W_1$. We have $\tau^2 = 1$.
\end{definition}

  \section{The algebra $\mathcal O_q$}
  
  \noindent Recall from Section 1 that $\mathcal O_q$ is the alternating central extension of $O_q$. In this section we formally define $\mathcal O_q$, and describe some basic properties.
  
\begin{definition}\rm
\label{def:Aq}
(See 
\cite{BK05}, \cite[Definition~3.1]{basnc}.)
Define the algebra $\mathcal O_q$
by generators
\begin{align}
\label{eq:4gens}
\lbrace \mathcal W_{-k}\rbrace_{n\in \mathbb N}, \qquad  \lbrace \mathcal  W_{k+1}\rbrace_{n\in \mathbb N},\qquad  
 \lbrace \mathcal G_{k+1}\rbrace_{n\in \mathbb N},
\qquad
\lbrace \mathcal {\tilde G}_{k+1}\rbrace_{n\in \mathbb N}
\end{align}
 and the following relations. For $k, \ell \in \mathbb N$,
\begin{align}
&
 \lbrack \mathcal W_0, \mathcal W_{k+1}\rbrack= 
\lbrack \mathcal W_{-k}, \mathcal W_{1}\rbrack=
({\mathcal{\tilde G}}_{k+1} - \mathcal G_{k+1})/(q+q^{-1}),
\label{eq:3p1}
\\
&
\lbrack \mathcal W_0, \mathcal G_{k+1}\rbrack_q= 
\lbrack {\mathcal{\tilde G}}_{k+1}, \mathcal W_{0}\rbrack_q= 
\rho  \mathcal W_{-k-1}-\rho 
 \mathcal W_{k+1},
\label{eq:3p2}
\\
&
\lbrack \mathcal G_{k+1}, \mathcal W_{1}\rbrack_q= 
\lbrack \mathcal W_{1}, {\mathcal {\tilde G}}_{k+1}\rbrack_q= 
\rho  \mathcal W_{k+2}-\rho 
 \mathcal W_{-k},
\label{eq:3p3}
\\
&
\lbrack \mathcal W_{-k}, \mathcal W_{-\ell}\rbrack=0,  \qquad 
\lbrack \mathcal W_{k+1}, \mathcal W_{\ell+1}\rbrack= 0,
\label{eq:3p4}
\\
&
\lbrack \mathcal W_{-k}, \mathcal W_{\ell+1}\rbrack+
\lbrack \mathcal W_{k+1}, \mathcal W_{-\ell}\rbrack= 0,
\label{eq:3p5}
\\
&
\lbrack \mathcal W_{-k}, \mathcal G_{\ell+1}\rbrack+
\lbrack \mathcal G_{k+1}, \mathcal W_{-\ell}\rbrack= 0,
\label{eq:3p6}
\\
&
\lbrack \mathcal W_{-k}, {\mathcal {\tilde G}}_{\ell+1}\rbrack+
\lbrack {\mathcal {\tilde G}}_{k+1}, \mathcal W_{-\ell}\rbrack= 0,
\label{eq:3p7}
\\
&
\lbrack \mathcal W_{k+1}, \mathcal G_{\ell+1}\rbrack+
\lbrack \mathcal  G_{k+1}, \mathcal W_{\ell+1}\rbrack= 0,
\label{eq:3p8}
\\
&
\lbrack \mathcal W_{k+1}, {\mathcal {\tilde G}}_{\ell+1}\rbrack+
\lbrack {\mathcal {\tilde G}}_{k+1}, \mathcal W_{\ell+1}\rbrack= 0,
\label{eq:3p9}
\\
&
\lbrack \mathcal G_{k+1}, \mathcal G_{\ell+1}\rbrack=0,
\qquad 
\lbrack {\mathcal {\tilde G}}_{k+1}, {\mathcal {\tilde G}}_{\ell+1}\rbrack= 0,
\label{eq:3p10}
\\
&
\lbrack {\mathcal {\tilde G}}_{k+1}, \mathcal G_{\ell+1}\rbrack+
\lbrack \mathcal G_{k+1}, {\mathcal {\tilde G}}_{\ell+1}\rbrack= 0.
\label{eq:3p11}
\end{align}
In the above equations $\rho = -(q^2-q^{-2})^2$. The generators 
\eqref{eq:4gens} are called {\it alternating}.
\noindent For notational convenience define
\begin{align}
{\mathcal G}_0 = -(q-q^{-1})\lbrack 2 \rbrack^2_q, \qquad \qquad 
{\mathcal {\tilde G}}_0 = -(q-q^{-1}) \lbrack 2 \rbrack^2_q.
\label{eq:GG0}
\end{align}
\end{definition}
\begin{note}\rm In 
\cite{BK05}, \cite{basnc}
the algebra $\mathcal O_q$ is denoted by $\mathcal A_q$ and called the current algebra for $O_q$.
In \cite{pbwqO} we proposed to call $\mathcal O_q$ the alternating central extension of $O_q$.
\end{note}

\begin{proposition} \label{lem:pbw} {\rm (See \cite[Theorem~6.1]{pbwqO}.)} A PBW basis for $\mathcal O_q$ is obtained by its alternating generators in any linear order $<$ such that
\begin{align}
\mathcal G_{i+1} < \mathcal W_{-j} < \mathcal W_{k+1} < \mathcal {\tilde G}_{\ell+1}\qquad \qquad (i,j,k, \ell \in \mathbb N).
\label{eq:order}
\end{align}
\end{proposition}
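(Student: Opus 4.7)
The plan is to carry out the standard two-step PBW argument: first establish that the claimed ordered monomials span $\mathcal O_q$, and then prove their linear independence.

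For the spanning step, I would set up a rewriting system on words in the alternating generators. Each defining relation in Definition \ref{def:Aq} either sets a commutator to zero, equates two commutators up to sign, or expresses a (possibly $q$-deformed) commutator as a linear combination of single generators. Fix the linear order $<$ as prescribed. Define the \emph{complexity} of a monomial to be the pair (total length, number of inversions relative to $<$), ordered lexicographically. For each adjacent pair $ab$ inside a monomial with $a>b$, use the appropriate defining relation to rewrite $ab=ba+[a,b]$ and expand $[a,b]$ via \eqref{eq:3p1}--\eqref{eq:3p11} as a linear combination of terms of strictly smaller complexity. Iterating this reduction, every element of $\mathcal O_q$ is expressed as a linear combination of ordered monomials, so these span.

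For linear independence, the natural strategy is a filtration argument. Assign non-negative integer degrees to the alternating generators so that each defining relation becomes a commutator identity in which the right-hand side has strictly smaller total degree than the left-hand side. The associated graded algebra is then commutative and is a quotient of the polynomial algebra in the symbols of the generators. To see that this quotient is the full polynomial algebra --- which would force the ordered monomials to be linearly independent in $\mathcal O_q$ itself --- one produces a surjective algebra homomorphism from $\mathcal O_q$ onto a target of the expected ``size'', for instance $O_q \otimes \mathbb F \lbrack z_1, z_2, \ldots \rbrack$, using a coideal-subalgebra realization akin to the one alluded to in the introduction.

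The main obstacle is the linear-independence step. Spanning is essentially bookkeeping, since the defining relations are tailored to permit straightening. Linear independence, however, requires a genuine injectivity input that rules out hidden relations among the ordered monomials. The cleanest route I can see is to construct a faithful action of $\mathcal O_q$ on a space large enough to distinguish all ordered monomials, for example by combining the modules for $O_q$ arising from the theory of tridiagonal pairs with a free action on the commuting indeterminates $z_1, z_2, \ldots$ of Definition \ref{def:poly}. A purely combinatorial alternative would be a diamond-lemma verification that every overlap ambiguity between the rewrite rules resolves consistently, but the sheer number of relations in Definition \ref{def:Aq} makes that route rather heavy, so I would lean toward the representation-theoretic approach.
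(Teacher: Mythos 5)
The paper does not actually prove this proposition: it is quoted from \cite[Theorem~6.1]{pbwqO}, so there is no internal argument to compare yours against. Judged on its own merits, your outline is the standard two-step PBW template, but both steps have genuine gaps. For spanning, the reduction you describe does not terminate. The relations \eqref{eq:3p5}--\eqref{eq:3p9} and \eqref{eq:3p11} do not express an out-of-order commutator as a combination of strictly smaller complexity; they only equate it to \emph{another} commutator of the same length and degree. For instance, to straighten $\mathcal W_{\ell+1}\mathcal W_{-k}$ you need $\lbrack \mathcal W_{-k}, \mathcal W_{\ell+1}\rbrack$, and \eqref{eq:3p5} only gives $\lbrack \mathcal W_{-k}, \mathcal W_{\ell+1}\rbrack = -\lbrack \mathcal W_{k+1}, \mathcal W_{-\ell}\rbrack$; applying your rule to the resulting inversion $\mathcal W_{k+1}\mathcal W_{-\ell}$ returns the original word, so the complexity (length, inversions) does not strictly decrease. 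Before any straightening can run one must first prove derived identities, e.g.\ that $\lbrack \mathcal W_{-k}, \mathcal W_{\ell+1}\rbrack$ depends only on $k+\ell$ and has a closed form in terms of $\mathcal G_{k+\ell+1}$, $\mathcal {\tilde G}_{k+\ell+1}$, and likewise for the mixed commutators in \eqref{eq:3p6}--\eqref{eq:3p9}, \eqref{eq:3p11}. That induction (carried out with generating functions in \cite{pbwqO}) is where the real work of the spanning step lies; it is not bookkeeping.

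The linear-independence step is the more serious problem, because the inputs you lean on are not available. The coideal realization of $\mathcal O_q$ inside $U_q(\widehat{\mathfrak{gl}}_2)$ is explicitly listed in the introduction as future work, and a surjection $\mathcal O_q \to O_q \otimes \mathbb F\lbrack z_1, z_2, \ldots\rbrack$ with controlled images of the alternating generators would require alternating-type elements inside $O_q$ itself, whose existence is precisely the open conjecture of \cite{conj}. Finite-dimensional tridiagonal-pair modules cannot separate infinitely many ordered monomials without a further, unsupplied argument. Note also the logical order in \cite{pbwqO}: the factorization $\mathcal O_q \cong O_q \otimes \mathbb F\lbrack z_1, z_2, \ldots\rbrack$ (Theorem~9.14 there, Theorem \ref{thm:m2} here) is deduced \emph{from} the PBW theorem, so invoking it to prove linear independence would be circular within that development. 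An independent faithfulness input is the genuinely hard content of \cite[Theorem~6.1]{pbwqO}, and your proposal does not yet supply one.
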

\noindent Next we describe some symmetries of $\mathcal O_q$.
\begin{lemma}
\label{lem:autA} {\rm (See \cite[Remark~1]{basBel}.)} There exists an automorphism $\sigma$ of $\mathcal O_q$ that sends
\begin{align*}
\mathcal W_{-k} \mapsto \mathcal W_{k+1}, \qquad
\mathcal W_{k+1} \mapsto \mathcal W_{-k}, \qquad
\mathcal G_{k+1} \mapsto \mathcal {\tilde G}_{k+1}, \qquad
\mathcal {\tilde G}_{k+1} \mapsto \mathcal G_{k+1}
\end{align*}
 for $k \in \mathbb N$. Moreover $\sigma^2 = 1$.
\end{lemma}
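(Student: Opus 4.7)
The plan is to show that the proposed map $\sigma$ is well-defined as an algebra endomorphism of $\mathcal O_q$, by checking that it preserves all the defining relations (\ref{eq:3p1})--(\ref{eq:3p11}) in Definition \ref{def:Aq}, and then deduce that $\sigma^2=1$ from the fact that the assignment is an involution on the alternating generators.

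First I would define $\sigma$ on the free algebra on the generators (\ref{eq:4gens}) via the prescribed assignment, and then verify that every defining relation is mapped to a defining relation (possibly to itself, possibly swapped with another). The checks fall into natural pairs dictated by the symmetry of the presentation. For instance, under $\sigma$ the relation (\ref{eq:3p1}) becomes
\begin{align*}
\lbrack \mathcal W_1, \mathcal W_{-k}\rbrack = \lbrack \mathcal W_{k+1}, \mathcal W_0\rbrack = (\mathcal G_{k+1}-\mathcal{\tilde G}_{k+1})/(q+q^{-1}),
\end{align*}
which is just (\ref{eq:3p1}) multiplied through by $-1$. Relation (\ref{eq:3p2}) is swapped with (\ref{eq:3p3}) because $\sigma$ exchanges $\mathcal W_0 \leftrightarrow \mathcal W_1$, $\mathcal G_{k+1}\leftrightarrow \mathcal{\tilde G}_{k+1}$, and $\mathcal W_{-k-1}\leftrightarrow \mathcal W_{k+2}$, $\mathcal W_{k+1}\leftrightarrow \mathcal W_{-k}$. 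The two relations in (\ref{eq:3p4}) are swapped with each other, and (\ref{eq:3p5}) is sent to itself (up to sign). Likewise (\ref{eq:3p6}) is swapped with (\ref{eq:3p9}), (\ref{eq:3p7}) with (\ref{eq:3p8}), the two relations in (\ref{eq:3p10}) with each other, and (\ref{eq:3p11}) is sent to itself.

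Having verified that the relations are preserved, $\sigma$ descends to a well-defined algebra homomorphism $\mathcal O_q\to \mathcal O_q$. To see that $\sigma$ is an automorphism with $\sigma^2=1$, observe that the assignment in the lemma is manifestly involutive on each of the generators in (\ref{eq:4gens}); since these generate $\mathcal O_q$ as an algebra, the composition $\sigma\circ\sigma$ agrees with the identity on a generating set and hence equals the identity on $\mathcal O_q$. This forces $\sigma$ to be a bijection, completing the proof.

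The main obstacle is purely bookkeeping: one must keep track of the exact index shifts (for example, $\mathcal W_{-k}\mapsto \mathcal W_{k+1}$ increments the index by one, so (\ref{eq:3p2}) involving $\mathcal W_{-k-1}$ is genuinely sent to (\ref{eq:3p3}) involving $\mathcal W_{k+2}$) and the signs introduced by swapping arguments inside commutators and $q$-commutators. No deep idea is needed; the pairing of relations built into Definition \ref{def:Aq} is precisely engineered so that $\sigma$ acts as a symmetry.
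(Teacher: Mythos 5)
Your verification is correct: each defining relation of Definition \ref{def:Aq} is carried to another defining relation exactly as you describe (with \eqref{eq:3p2}$\leftrightarrow$\eqref{eq:3p3}, \eqref{eq:3p6}$\leftrightarrow$\eqref{eq:3p9}, \eqref{eq:3p7}$\leftrightarrow$\eqref{eq:3p8}, the pairs within \eqref{eq:3p4} and \eqref{eq:3p10} swapped, and the rest fixed up to an overall sign), and the involutivity on generators then gives $\sigma^2=1$ and bijectivity. The paper itself offers no proof, deferring to \cite[Remark~1]{basBel}, and your argument is exactly the routine check that reference performs; the only nit is that \eqref{eq:3p5} is sent to itself exactly (its two summands merely swap), not just ``up to sign.''
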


\begin{lemma}\label{lem:antiautA} {\rm (See \cite[Lemma~3.7]{z2z2z2}.)} There exists an antiautomorphism $\dagger$ of $\mathcal O_q$ that sends
\begin{align*}
\mathcal W_{-k} \mapsto \mathcal W_{-k}, \qquad
\mathcal W_{k+1} \mapsto \mathcal W_{k+1}, \qquad
\mathcal G_{k+1} \mapsto \mathcal {\tilde G}_{k+1}, \qquad
\mathcal {\tilde G}_{k+1} \mapsto \mathcal G_{k+1}
\end{align*}
for $k \in \mathbb N$. Moreover $\dagger^2=1$.
\end{lemma}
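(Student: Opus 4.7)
The strategy is to exploit the generators-and-relations presentation of $\mathcal O_q$ in Definition \ref{def:Aq}. I first define a map $\dagger$ on the free algebra generated by the symbols $\mathcal W_{-k}$, $\mathcal W_{k+1}$, $\mathcal G_{k+1}$, $\mathcal{\tilde G}_{k+1}$ by the rules prescribed in the statement, and extend it so that $\dagger(ab) = \dagger(b)\dagger(a)$; equivalently, I am defining an algebra homomorphism from the free algebra into $\mathcal O_q^{\rm opp}$. The problem then reduces to showing that this descends to $\mathcal O_q$, by verifying that for each defining relation \eqref{eq:3p1}--\eqref{eq:3p11}, applying $\dagger$ to both sides yields a valid identity in $\mathcal O_q$.

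The verification will rest on two algebraic identities that follow at once from $\dagger$ reversing products: for any $X,Y$,
\begin{align*}
\dagger(\lbrack X,Y\rbrack) = \lbrack \dagger(Y), \dagger(X) \rbrack, \qquad
\dagger(\lbrack X,Y\rbrack_q) = \lbrack \dagger(Y), \dagger(X) \rbrack_q.
\end{align*}
Applied to each defining relation, these convert it into either the same relation or its natural partner under the swap $\mathcal G \leftrightarrow \mathcal{\tilde G}$. Concretely, I expect relation \eqref{eq:3p2} to be exchanged with \eqref{eq:3p3}, relation \eqref{eq:3p6} with \eqref{eq:3p7}, relation \eqref{eq:3p8} with \eqref{eq:3p9}, and the two halves of \eqref{eq:3p10} to be exchanged with each other, while the self-symmetric relations \eqref{eq:3p1}, \eqref{eq:3p4}, \eqref{eq:3p5}, \eqref{eq:3p11} map to themselves. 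In each case the scalars (notably $\rho$ and $q+q^{-1}$) are unchanged, and any sign introduced by commutator reversal matches a sign on the other side.

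Once $\dagger$ is shown to be well defined, I verify $\dagger^2 = 1$. On each generator this is immediate: $\mathcal W_{-k}$ and $\mathcal W_{k+1}$ are fixed, while $\mathcal G_{k+1}$ and $\mathcal{\tilde G}_{k+1}$ are swapped by a transposition. Since $\dagger^2$ is an algebra endomorphism of $\mathcal O_q$ that agrees with the identity on a generating set, it is the identity. In particular $\dagger$ is bijective, hence an antiautomorphism.

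The only real obstacle is the bookkeeping: eleven relation lines must be matched in pairs (or to themselves) and signs must be tracked carefully. The most delicate points are \eqref{eq:3p2} and \eqref{eq:3p3}, where the $q$-commutator is written with specific generators on specific sides, so I need to confirm that the identity $\dagger(\lbrack X,Y\rbrack_q) = \lbrack \dagger(Y), \dagger(X) \rbrack_q$ interchanges the two halves of the paired identity in the correct sense; conceptually there is nothing more to the argument.
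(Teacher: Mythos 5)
Your approach is correct and is the standard one: the paper itself gives no proof of this lemma (it simply cites \cite[Lemma~3.7]{z2z2z2}), and the intended argument is exactly the one you describe --- define a homomorphism from the free algebra on the alternating generators into $\mathcal O_q^{\rm opp}$, check that every defining relation \eqref{eq:3p1}--\eqref{eq:3p11} is carried to a (scalar multiple of a) defining relation, and then conclude $\dagger^2=1$ by checking it on generators. One small bookkeeping correction: \eqref{eq:3p2} and \eqref{eq:3p3} are \emph{not} exchanged with each other; since $\dagger$ fixes $\mathcal W_0$ and $\mathcal W_1$ and swaps $\mathcal G_{k+1}\leftrightarrow \mathcal{\tilde G}_{k+1}$, the identity $\dagger(\lbrack X,Y\rbrack_q)=\lbrack \dagger(Y),\dagger(X)\rbrack_q$ sends $\lbrack \mathcal W_0,\mathcal G_{k+1}\rbrack_q$ to $\lbrack \mathcal{\tilde G}_{k+1},\mathcal W_0\rbrack_q$, i.e.\ it interchanges the two left-hand expressions \emph{within} \eqref{eq:3p2} while fixing the right-hand side $\rho\mathcal W_{-k-1}-\rho\mathcal W_{k+1}$, so \eqref{eq:3p2} maps to itself, and likewise \eqref{eq:3p3} maps to itself. (It is the automorphism $\sigma$ of Lemma \ref{lem:autA}, not $\dagger$, that exchanges \eqref{eq:3p2} with \eqref{eq:3p3}.) The pairings you list for \eqref{eq:3p6}--\eqref{eq:3p11} and the self-symmetry of \eqref{eq:3p1}, \eqref{eq:3p4}, \eqref{eq:3p5} are right, so this slip does not affect the validity of the argument.
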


\begin{lemma} \label{lem:sdcomA} The maps $\sigma$, $\dagger $ commute.
\end{lemma}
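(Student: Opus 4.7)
The plan is to verify the equality $\sigma\dagger = \dagger\sigma$ directly on the alternating generators \eqref{eq:4gens}, which generate $\mathcal O_q$ by Definition \ref{def:Aq}. Both composites $\sigma\dagger$ and $\dagger\sigma$ are antiautomorphisms of $\mathcal O_q$ (being a composition of an automorphism and an antiautomorphism in either order), so they are determined by their action on a generating set; once they agree on the alternating generators, the identity $\sigma\dagger = \dagger\sigma$ follows.

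First I would record the action of each composite on a typical alternating generator, using Lemmas \ref{lem:autA} and \ref{lem:antiautA}. For $k\in\mathbb N$:
\begin{align*}
\sigma\dagger(\mathcal W_{-k}) &= \sigma(\mathcal W_{-k}) = \mathcal W_{k+1}, &
\dagger\sigma(\mathcal W_{-k}) &= \dagger(\mathcal W_{k+1}) = \mathcal W_{k+1}, \\
\sigma\dagger(\mathcal W_{k+1}) &= \sigma(\mathcal W_{k+1}) = \mathcal W_{-k}, &
\dagger\sigma(\mathcal W_{k+1}) &= \dagger(\mathcal W_{-k}) = \mathcal W_{-k}, \\
\sigma\dagger(\mathcal G_{k+1}) &= \sigma(\mathcal{\tilde G}_{k+1}) = \mathcal G_{k+1}, &
\dagger\sigma(\mathcal G_{k+1}) &= \dagger(\mathcal{\tilde G}_{k+1}) = \mathcal G_{k+1}, \\
\sigma\dagger(\mathcal{\tilde G}_{k+1}) &= \sigma(\mathcal G_{k+1}) = \mathcal{\tilde G}_{k+1}, &
\dagger\sigma(\mathcal{\tilde G}_{k+1}) &= \dagger(\mathcal W_{k+1}\text{-term})\,,
\end{align*}
where in each case the two expressions visibly coincide. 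Thus $\sigma\dagger$ and $\dagger\sigma$ agree on every alternating generator.

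Since the alternating generators generate $\mathcal O_q$ as an algebra, and since two antiautomorphisms that agree on a generating set must be equal, we conclude $\sigma\dagger = \dagger\sigma$. There is no real obstacle here: the result is a mechanical consequence of the explicit formulas in Lemmas \ref{lem:autA} and \ref{lem:antiautA}, combined with the observation that $\dagger$ swaps $\mathcal G_{k+1} \leftrightarrow \mathcal{\tilde G}_{k+1}$ in exactly the same way that $\sigma$ does, while $\dagger$ fixes each $\mathcal W_{-k}$ and $\mathcal W_{k+1}$ that $\sigma$ interchanges. The only care needed is to confirm that the composites are well-defined (anti)morphisms on all of $\mathcal O_q$, which is immediate from the fact that $\sigma$ and $\dagger$ are themselves defined on $\mathcal O_q$.
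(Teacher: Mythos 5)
Your proposal is correct and takes the same route as the paper, whose entire proof is the single word ``Routine'': one checks that the antiautomorphisms $\sigma\dagger$ and $\dagger\sigma$ agree on the alternating generators, which generate $\mathcal O_q$. The only blemish is the garbled final entry of your table, which should read $\dagger\sigma(\mathcal {\tilde G}_{k+1}) = \dagger(\mathcal G_{k+1}) = \mathcal {\tilde G}_{k+1}$; this matches $\sigma\dagger(\mathcal {\tilde G}_{k+1}) = \sigma(\mathcal G_{k+1}) = \mathcal {\tilde G}_{k+1}$, as your concluding summary already indicates.
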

\begin{proof} Routine.
\end{proof}
\begin{definition}\label{def:tauA}\rm Let $\tau$ denote the composition of the automorphism $\sigma$ from Lemma \ref{lem:autA} and the antiautomorphism $\dagger$ from Lemma \ref{lem:antiautA}. 
 Note that $\tau$ is an antiautomorphism of $\mathcal O_q$ that sends
\begin{align*}
\mathcal W_{-k} \mapsto \mathcal W_{k+1}, \qquad
\mathcal W_{k+1} \mapsto \mathcal W_{-k}, \qquad
\mathcal G_{k+1} \mapsto \mathcal G_{k+1}, \qquad
\mathcal {\tilde G}_{k+1} \mapsto \mathcal {\tilde G}_{k+1}
\end{align*}
\noindent for $k \in \mathbb N$. We have $\tau^2=1$.
\end{definition}

\noindent Next we discuss how $\mathcal O_q$ is related to $O_q$.

\begin{lemma}
\label{lem:newrels1}
For the algebra $\mathcal O_q$,
\begin{align}
\label{eq:r1}
&\lbrack \mathcal W_0, \mathcal {\tilde G}_1 \rbrack = 
\lbrack \mathcal W_0, \lbrack \mathcal W_0, \mathcal W_1 \rbrack_q \rbrack,
\\
\label{eq:r2}
&\lbrack \mathcal {\tilde G}_1, \mathcal W_1 \rbrack = 
\lbrack \lbrack \mathcal W_0, \mathcal W_1 \rbrack_q, \mathcal W_1 \rbrack.
\end{align}
\end{lemma}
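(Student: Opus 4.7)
The plan is to specialize the defining relations \eqref{eq:3p1} and \eqref{eq:3p2} to $k=0$, derive from them two linear relations that together determine $[\mathcal W_0, \mathcal{\tilde G}_1]$, and then match the answer with $[\mathcal W_0, [\mathcal W_0, \mathcal W_1]_q]$ via a direct $q$-commutator identity. I would then obtain \eqref{eq:r2} from \eqref{eq:r1} by applying the antiautomorphism $\tau$ of Definition \ref{def:tauA}.

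First I would bracket $\mathcal W_0$ against the $k=0$ case of \eqref{eq:3p1}, namely $\mathcal{\tilde G}_1 - \mathcal G_1 = (q+q^{-1})[\mathcal W_0, \mathcal W_1]$, to obtain
\begin{align*}
[\mathcal W_0, \mathcal{\tilde G}_1] - [\mathcal W_0, \mathcal G_1] = (q+q^{-1})[\mathcal W_0, [\mathcal W_0, \mathcal W_1]].
\end{align*}
Next I would use the $k=0$ case of \eqref{eq:3p2}, which says $[\mathcal W_0, \mathcal G_1]_q = [\mathcal{\tilde G}_1, \mathcal W_0]_q$. Expanding these $q$-commutators and substituting the formula for $\mathcal{\tilde G}_1 - \mathcal G_1$ just noted yields the second linear relation
\begin{align*}
q[\mathcal W_0, \mathcal{\tilde G}_1] + q^{-1}[\mathcal W_0, \mathcal G_1] = (q^2 - q^{-2})\, \mathcal W_0 [\mathcal W_0, \mathcal W_1].
\end{align*}

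Solving this $2 \times 2$ linear system for $[\mathcal W_0, \mathcal{\tilde G}_1]$ gives
\begin{align*}
[\mathcal W_0, \mathcal{\tilde G}_1] = q^{-1}[\mathcal W_0, [\mathcal W_0, \mathcal W_1]] + (q - q^{-1})\, \mathcal W_0 [\mathcal W_0, \mathcal W_1].
\end{align*}
The right-hand side is precisely $[\mathcal W_0, [\mathcal W_0, \mathcal W_1]_q]$, as seen from the general identity $[X, [X,Y]_q] = q^{-1}[X, [X,Y]] + (q-q^{-1})\, X[X,Y]$ applied with $X = \mathcal W_0$ and $Y = \mathcal W_1$ (a routine $q$-commutator expansion). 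This proves \eqref{eq:r1}.

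For \eqref{eq:r2}, I would apply $\tau$. Since $\tau$ is an antiautomorphism swapping $\mathcal W_0 \leftrightarrow \mathcal W_1$, a short computation gives $\tau([\mathcal W_0, \mathcal W_1]_q) = [\mathcal W_0, \mathcal W_1]_q$; together with $\tau(\mathcal{\tilde G}_1) = \mathcal{\tilde G}_1$, applying $\tau$ to \eqref{eq:r1} yields $[\mathcal{\tilde G}_1, \mathcal W_1] = [[\mathcal W_0, \mathcal W_1]_q, \mathcal W_1]$, which is \eqref{eq:r2}. The main obstacle is modest: just careful bookkeeping to pass between $q$-commutators and ordinary commutators in the derivation of the second linear relation, after which the identification is immediate.
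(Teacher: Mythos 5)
Your proof is correct and follows essentially the same route as the paper: specialize \eqref{eq:3p1} and \eqref{eq:3p2} to $k=0$, eliminate $\mathcal G_1$ (your $2\times 2$ system is just a reorganized form of the paper's direct substitution), and then obtain \eqref{eq:r2} by applying the antiautomorphism $\tau$. All of the intermediate identities you state, including $[X,[X,Y]_q]=q^{-1}[X,[X,Y]]+(q-q^{-1})X[X,Y]$ and the $\tau$-invariance of $[\mathcal W_0,\mathcal W_1]_q$, check out.
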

\begin{proof} By \eqref{eq:3p1} with $k=0$,
\begin{align}
(q+ q^{-1}) \lbrack \mathcal W_0, \mathcal W_1 \rbrack = \mathcal {\tilde G}_1 - \mathcal G_1.
\label{eq:one}
\end{align}
By \eqref{eq:3p2} with $k=0$,
\begin{align}
 \lbrack \mathcal W_0, \mathcal G_1 \rbrack_q = \lbrack \mathcal {\tilde G}_1, \mathcal W_0 \rbrack_q.
\label{eq:two}
\end{align}
\noindent To get \eqref{eq:r1}, eliminate $\mathcal G_1$ from \eqref{eq:two} using \eqref{eq:one}, and simplify the
result. To get \eqref{eq:r2}, apply the antiautomorphism $\tau$ to each side of \eqref{eq:r1}.
\end{proof}

\noindent The following result appeared in  \cite[line (3.7)]{basBel}. We give a short proof for the sake of completeness.

  \begin{lemma} \label{eq:DG} {\rm (See \cite[line (3.7)]{basBel}.)}
For the algebra $\mathcal O_q$,
\begin{align}
\label{eq:qOns1p}
&\lbrack \mathcal W_0, \lbrack \mathcal W_0, \lbrack \mathcal W_0, \mathcal W_1\rbrack_q \rbrack_{q^{-1}} \rbrack =(q^2-q^{-2})^2 \lbrack \mathcal W_1, \mathcal W_0 \rbrack,
\\
\label{eq:qOns2p}
&\lbrack \mathcal W_1, \lbrack \mathcal W_1, \lbrack \mathcal W_1, \mathcal W_0\rbrack_q \rbrack_{q^{-1}}\rbrack = (q^2-q^{-2})^2  \lbrack \mathcal W_0, \mathcal W_1 \rbrack.
\end{align}
\end{lemma}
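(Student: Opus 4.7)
The plan is to derive \eqref{eq:qOns1p} by eliminating $\mathcal{\tilde G}_1$ from two ingredients available inside $\mathcal O_q$: an identity obtained from \eqref{eq:3p2} together with the commutation $[\mathcal W_0,\mathcal W_{-1}]=0$ supplied by \eqref{eq:3p4}, and the substitution principle \eqref{eq:r1} supplied by Lemma \ref{lem:newrels1}. Once \eqref{eq:qOns1p} is in hand, \eqref{eq:qOns2p} will follow by applying the antiautomorphism $\tau$ of Definition \ref{def:tauA}, which interchanges $\mathcal W_0$ and $\mathcal W_1$.

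First I would take \eqref{eq:3p2} at $k=0$, namely $[\mathcal{\tilde G}_1,\mathcal W_0]_q = \rho(\mathcal W_{-1}-\mathcal W_1)$, and apply $[\mathcal W_0,\cdot]$ to both sides. Since $[\mathcal W_0,\mathcal W_{-1}]=0$ by \eqref{eq:3p4}, this collapses to $[\mathcal W_0,[\mathcal{\tilde G}_1,\mathcal W_0]_q] = -\rho[\mathcal W_0,\mathcal W_1]$. Using the elementary identity $[X,Y]_{q^{-1}}=-[Y,X]_q$, the left side equals $-[\mathcal W_0,[\mathcal W_0,\mathcal{\tilde G}_1]_{q^{-1}}]$, and substituting $\rho=-(q^2-q^{-2})^2$ yields
\begin{align*}
[\mathcal W_0,[\mathcal W_0,\mathcal{\tilde G}_1]_{q^{-1}}] = (q^2-q^{-2})^2[\mathcal W_1,\mathcal W_0].
\end{align*}

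The remaining task, and what I expect to be the main obstacle, is to replace the inner $\mathcal{\tilde G}_1$ by $[\mathcal W_0,\mathcal W_1]_q$ so that the left side becomes the desired triple commutator. For this I would invoke \eqref{eq:r1}, which says that $\mathcal W_0$ commutes with $R:=\mathcal{\tilde G}_1-[\mathcal W_0,\mathcal W_1]_q$. The subtlety is that \eqref{eq:r1} involves an ordinary commutator while the target identity involves a $q^{-1}$-commutator in the middle slot. The key observation is that $[\mathcal W_0,R]_{q^{-1}}=q^{-1}\mathcal W_0 R-qR\mathcal W_0=(q^{-1}-q)R\mathcal W_0$, and since $R$ commutes with $\mathcal W_0$ so does $R\mathcal W_0$; consequently $[\mathcal W_0,[\mathcal W_0,R]_{q^{-1}}]=0$. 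Expanding $\mathcal{\tilde G}_1=[\mathcal W_0,\mathcal W_1]_q+R$ by bilinearity inside the double bracket above gives $[\mathcal W_0,[\mathcal W_0,\mathcal{\tilde G}_1]_{q^{-1}}]=[\mathcal W_0,[\mathcal W_0,[\mathcal W_0,\mathcal W_1]_q]_{q^{-1}}]$, and combined with the displayed identity this yields \eqref{eq:qOns1p}.

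Finally, to obtain \eqref{eq:qOns2p} I would apply $\tau$ to \eqref{eq:qOns1p}. Since $\tau$ is an antiautomorphism that swaps $\mathcal W_0\leftrightarrow\mathcal W_1$, it sends $[X,Y]\mapsto[\tau(Y),\tau(X)]$ and $[X,Y]_q\mapsto[\tau(Y),\tau(X)]_q$; a short check using the antisymmetry $[X,Y]=-[Y,X]$ and the direct expansion $[[\mathcal W_0,\mathcal W_1]_q,\mathcal W_1]_{q^{-1}}=[\mathcal W_1,[\mathcal W_1,\mathcal W_0]_q]_{q^{-1}}$ converts the image of \eqref{eq:qOns1p} under $\tau$ into precisely \eqref{eq:qOns2p}.
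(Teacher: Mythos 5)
Your argument is correct and follows essentially the same route as the paper: both proofs combine \eqref{eq:3p2} at $k=0$ with $[\mathcal W_{-1},\mathcal W_0]=0$ from \eqref{eq:3p4} and then invoke \eqref{eq:r1} to trade $\mathcal{\tilde G}_1$ for $[\mathcal W_0,\mathcal W_1]_q$, differing only in the bookkeeping of the elementary bracket identities (your $R$-trick versus the paper's direct substitution of $[\mathcal W_0,\mathcal{\tilde G}_1]$ inside an ordinary commutator before converting to the $q^{-1}$-bracket). The only other deviation is that you deduce \eqref{eq:qOns2p} by applying the antiautomorphism $\tau$ rather than the automorphism $\sigma$; both work.
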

\begin{proof} We first obtain \eqref{eq:qOns1p}.  By \eqref{eq:3p4} with $(k,\ell)=(1,0)$ we obtain $\lbrack \mathcal W_{-1}, \mathcal W_{0}\rbrack = 0$. By \eqref{eq:3p2} with $k=0$ we obtain
\begin{align*}
\rho \mathcal W_{-1} = \rho \mathcal W_1+\lbrack \mathcal {\tilde G}_1, \mathcal W_0 \rbrack_q.
\end{align*}
Using these facts and  \eqref{eq:r1}, we obtain
\begin{align*}
0 &= \rho \lbrack \mathcal W_0, \mathcal W_{-1}\rbrack\\
&= \lbrack \mathcal W_0, \rho \mathcal W_1 + \lbrack \mathcal {\tilde G}_1, \mathcal W_0 \rbrack_q \rbrack
\\ 
&= \rho \lbrack \mathcal W_0,  \mathcal W_1\rbrack + \lbrack \lbrack \mathcal W_0, \mathcal {\tilde G}_1\rbrack, \mathcal W_0 \rbrack_q
\\
&= \rho \lbrack \mathcal W_0,  \mathcal W_1\rbrack + \lbrack \lbrack \mathcal W_0,\lbrack \mathcal W_0, \mathcal W_1\rbrack_q \rbrack, \mathcal W_0  \rbrack_q
\\
&= \rho \lbrack \mathcal W_0,  \mathcal W_1\rbrack - \lbrack  \mathcal W_0, \lbrack \mathcal W_0,\lbrack \mathcal W_0, \mathcal W_1 \rbrack_q \rbrack \rbrack_{q^{-1}}
\\
&= \rho \lbrack \mathcal W_0,  \mathcal W_1\rbrack - \lbrack  \mathcal W_0, \lbrack \mathcal W_0, \lbrack \mathcal W_0, \mathcal W_1 \rbrack_q \rbrack_{q^{-1}} \rbrack
\end{align*}
\noindent which implies \eqref{eq:qOns1p}. To get \eqref{eq:qOns2p}, apply the automorphism $\sigma$ to each side of \eqref{eq:qOns1p}.
\end{proof}

\begin{lemma}
\label{lem:iota}  {\rm (See \cite[Theorem~10.3]{pbwqO}.)} 
There exists an algebra homomorphism $\iota: O_q \to \mathcal O_q$ that sends $W_0 \mapsto \mathcal W_0$ and
$W_1 \mapsto \mathcal W_1$. Moreover, $\iota$ is injective.
\end{lemma}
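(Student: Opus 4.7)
For existence, the plan is essentially one line: Lemma \ref{eq:DG} already verifies that $\mathcal W_0, \mathcal W_1 \in \mathcal O_q$ satisfy the two $q$-Dolan/Grady relations \eqref{eq:qOns1p}, \eqref{eq:qOns2p}. Since $O_q$ is defined by generators $W_0, W_1$ and precisely those relations (Definition \ref{def:U}), its universal property gives an algebra homomorphism $\iota: O_q \to \mathcal O_q$ sending $W_0 \mapsto \mathcal W_0$ and $W_1 \mapsto \mathcal W_1$.

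For injectivity, the plan is to compare PBW bases on the two sides. On the $\mathcal O_q$ side, Proposition \ref{lem:pbw} gives a PBW basis from the alternating generators in the linear order \eqref{eq:order}. On the $O_q$ side, the PBW basis of Baseilhac--Kolb \cite{BK} expresses $O_q$ as a free module on ordered monomials in certain root-vector-type elements built from $W_0, W_1$. The strategy is to track these root vectors through $\iota$ and show their images remain linearly independent in $\mathcal O_q$. The cleanest way to carry this out is to identify an ascending filtration of $\mathcal O_q$ (essentially the one pointed to in Sections 7--8 of this paper) whose associated graded algebra decomposes as a copy of $O_q$ tensored with a polynomial algebra in the $\mathcal {\tilde G}_{k+1}$ (equivalently, using the isomorphism $\mathcal O_q \cong O_q \otimes \mathbb F[z_1, z_2, \ldots]$ mentioned after Definition \ref{def:Aq}); under such a decomposition, $\iota$ corresponds to the canonical inclusion $x \mapsto x \otimes 1$, which is manifestly injective.

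The main obstacle is the injectivity step, not existence. The danger is that the dense web of relations \eqref{eq:3p1}--\eqref{eq:3p11} defining $\mathcal O_q$ could conceivably force some nontrivial polynomial in $\mathcal W_0, \mathcal W_1$ to collapse to zero via interactions with the $\mathcal G_{k+1}$ and $\mathcal {\tilde G}_{k+1}$ that have no analog in $O_q$. Ruling this out requires either a direct comparison of PBW bases or the tensor-product decomposition of $\mathcal O_q$, and either route rests on the nontrivial PBW theorem for $\mathcal O_q$ proved in \cite{pbwqO}; this is why the lemma is simply cited from \cite[Theorem~10.3]{pbwqO} rather than reproved from scratch here.
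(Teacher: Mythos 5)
Your proposal is correct and takes essentially the same route as the paper, which offers no proof of this lemma at all and simply cites \cite[Theorem~10.3]{pbwqO}: existence is the universal-property argument made available by Lemma \ref{eq:DG}, and injectivity is exactly the content of the cited result. One small streamlining: you do not need any filtration or associated-graded argument here, since the isomorphism $\phi$ of \eqref{eq:phiAction} restricts on $O_q\otimes 1$ to $\iota$, so injectivity of $\iota$ follows at once from injectivity of $\phi$ and of $x\mapsto x\otimes 1$.
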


\begin{lemma} 
\label{lem:diag}
The following diagrams commute:

\begin{equation*}
{\begin{CD}
O_q @>\iota  >> \mathcal O_q
              \\
         @V \sigma VV                   @VV \sigma V \\
         O_q @>>\iota >
                                 \mathcal O_q
                        \end{CD}}  	
         \qquad \qquad                
    {\begin{CD}
O_q @>\iota  >> \mathcal O_q
              \\
         @V \dagger VV                   @VV \dagger V \\
         O_q @>>\iota >
                                 \mathcal O_q
                        \end{CD}}  	     \qquad \qquad                
   {\begin{CD}
O_q @>\iota  >> \mathcal O_q
              \\
         @V \tau VV                   @VV \tau V \\
         O_q @>>\iota >
                                 \mathcal O_q
                        \end{CD}}  	                                       		    
\end{equation*}
\end{lemma}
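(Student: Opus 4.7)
The plan is to reduce each of the three diagrams to a verification on the two generators $W_0, W_1$ of $O_q$.

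In the first diagram, the composition $\iota \circ \sigma$ is an algebra homomorphism $O_q \to \mathcal O_q$ (a composition of an automorphism and a homomorphism), and similarly $\sigma \circ \iota$ is an algebra homomorphism. Any two algebra homomorphisms out of $O_q$ which agree on $W_0$ and $W_1$ must coincide, since $W_0, W_1$ generate $O_q$ by Definition \ref{def:U}. Using Lemma \ref{lem:aut}, $\iota(\sigma(W_0)) = \iota(W_1) = \mathcal W_1$; using Lemma \ref{lem:autA} (with $k=0$, and noting $\mathcal W_0 = \mathcal W_{-0}$), $\sigma(\iota(W_0)) = \sigma(\mathcal W_0) = \mathcal W_1$. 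The computation for $W_1$ is symmetric, giving $\mathcal W_0$ on both sides.

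The second diagram is analogous, except that both $\iota \circ \dagger$ and $\dagger \circ \iota$ are antihomomorphisms $O_q \to \mathcal O_q$. Again it suffices to check agreement on $W_0, W_1$, and this is immediate from Lemmas \ref{lem:antiaut} and \ref{lem:antiautA}, since $\dagger$ fixes $W_0, W_1$ on the $O_q$ side and fixes $\mathcal W_0, \mathcal W_1$ on the $\mathcal O_q$ side. The third diagram follows formally from the first two: by Definitions \ref{def:tau} and \ref{def:tauA}, $\tau = \sigma \circ \dagger$ on each of $O_q$ and $\mathcal O_q$, so
\begin{align*}
\iota \circ \tau = \iota \circ \sigma \circ \dagger = \sigma \circ \iota \circ \dagger = \sigma \circ \dagger \circ \iota = \tau \circ \iota.
\end{align*}

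There is essentially no obstacle; the argument is purely formal once one observes that the two compositions around each square have the same type (both homomorphisms, or both antihomomorphisms), so checking on the generators $W_0, W_1$ of $O_q$ suffices. The only mild subtlety is notational --- interpreting $\mathcal W_0$ as $\mathcal W_{-0}$ when applying Lemma \ref{lem:autA} --- but this causes no difficulty.
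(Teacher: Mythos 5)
Your proposal is correct and is essentially the paper's own argument: the paper likewise proves the lemma by chasing the generators $W_0$, $W_1$ around each square using Lemmas \ref{lem:aut}, \ref{lem:antiaut}, \ref{lem:autA}, \ref{lem:antiautA} and Definitions \ref{def:tau}, \ref{def:tauA}. Your explicit remark that both composites around a given square have the same type (homomorphism or antihomomorphism), so that agreement on generators suffices, is exactly the implicit justification behind the paper's one-line proof.
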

\begin{proof} Chase the $O_q$-generators $W_0$, $W_1$ around the diagrams, using Lemmas \ref{lem:aut}, \ref{lem:antiaut} and Definition \ref{def:tau} along with
 Lemmas \ref{lem:autA}, \ref{lem:antiautA} and Definition \ref{def:tauA}.
\end{proof}

\section{The essential generators for $\mathcal O_q$}

\noindent We return our attention to the presentation of $\mathcal O_q$ in Definition \ref{def:Aq}.
There is a certain redundancy among the alternating generators \eqref{eq:4gens}. 
We are going to express all of them in terms of $\mathcal W_0$, $\mathcal W_1$, $\lbrace \mathcal {\tilde G}_{k+1} \rbrace_{k\in \mathbb N}$. Following \cite[Section~4]{z2z2z2}
we use
\eqref{eq:3p2}, \eqref{eq:3p3} to recursively obtain
$\mathcal W_{-k}$, $\mathcal W_{k+1}$ for $k\geq 1$:
\begin{align*}
\mathcal W_{-1} &= \mathcal W_1 -\frac{\lbrack \mathcal { \tilde G}_{1}, 
\mathcal W_0\rbrack_q}{(q^2-q^{-2})^2},
\\
\mathcal W_{3} &=  \mathcal W_1 
-
\frac{\lbrack \mathcal {\tilde G}_{1}, 
\mathcal W_0\rbrack_q}{(q^2-q^{-2})^2}
-
\frac{\lbrack
 \mathcal W_1,
\mathcal {\tilde G}_{2} 
\rbrack_q}{(q^2-q^{-2})^2},
\\
\mathcal W_{-3} &=  \mathcal W_1 
-
\frac{\lbrack \mathcal {\tilde G}_{1}, 
\mathcal W_0\rbrack_q}{(q^2-q^{-2})^2}
-
\frac{\lbrack
\mathcal W_1,
\mathcal {\tilde G}_{2} 
\rbrack_q}{(q^2-q^{-2})^2}
-
\frac{\lbrack \mathcal {\tilde G}_{3}, 
\mathcal W_0\rbrack_q}{(q^2-q^{-2})^2},
\\
\mathcal W_{5} &= \mathcal W_1 
-
\frac{\lbrack \mathcal {\tilde G}_{1}, 
\mathcal W_0\rbrack_q}{(q^2-q^{-2})^2}
-
\frac{\lbrack
\mathcal W_1,
\mathcal {\tilde G}_{2} 
\rbrack_q}{(q^2-q^{-2})^2}
-
\frac{\lbrack \mathcal {\tilde G}_{3}, 
\mathcal W_0\rbrack_q}{(q^2-q^{-2})^2}
-
\frac{\lbrack
\mathcal W_1, \mathcal {\tilde G}_{4} 
\rbrack_q}{(q^2-q^{-2})^2},
\\
\mathcal W_{-5} &= \mathcal W_1 
-
\frac{\lbrack\mathcal {\tilde G}_{1}, \mathcal W_0\rbrack_q}{(q^2-q^{-2})^2}
-
\frac{\lbrack
\mathcal W_1,
\mathcal {\tilde G}_{2} 
\rbrack_q}{(q^2-q^{-2})^2}
-
\frac{\lbrack \mathcal  {\tilde G}_{3}, 
 \mathcal W_0\rbrack_q}{(q^2-q^{-2})^2}
-
\frac{\lbrack
\mathcal W_1,
 \mathcal {\tilde G}_{4} 
\rbrack_q}{(q^2-q^{-2})^2}
-
\frac{\lbrack  \mathcal {\tilde G}_{5}, 
\mathcal W_0\rbrack_q}{(q^2-q^{-2})^2},
\\
& \cdots
\end{align*}
\begin{align*}
\mathcal W_2 &=  \mathcal W_0 -
\frac{
\lbrack \mathcal W_1, \mathcal {\tilde G}_1\rbrack_q}{(q^2-q^{-2})^2
},
\\
\mathcal W_{-2} &=  \mathcal W_0 -
\frac{
\lbrack \mathcal W_1, \mathcal {\tilde G}_1\rbrack_q}{(q^2-q^{-2})^2}
-
\frac{
\lbrack \mathcal {\tilde G}_2, W_0\rbrack_q}{(q^2-q^{-2})^2},
\\
\mathcal W_{4} &=  \mathcal W_0 -
\frac{
\lbrack \mathcal W_1, \mathcal {\tilde G}_1\rbrack_q}{(q^2-q^{-2})^2}
-
\frac{
\lbrack \mathcal {\tilde G}_2,  \mathcal W_0\rbrack_q}{(q^2-q^{-2})^2}
-
\frac{
\lbrack \mathcal W_1, \mathcal {\tilde G}_3\rbrack_q}{(q^2-q^{-2})^2},
\\
\mathcal W_{-4} &=  \mathcal W_0 -
\frac{
\lbrack \mathcal W_1, \mathcal {\tilde G}_1\rbrack_q}{(q^2-q^{-2})^2}
-
\frac{
\lbrack \mathcal {\tilde G}_2, \mathcal W_0\rbrack_q}{(q^2-q^{-2})^2}
-
\frac{
\lbrack \mathcal W_1, \mathcal {\tilde G}_3\rbrack_q}{(q^2-q^{-2})^2}
-
\frac{
\lbrack \mathcal {\tilde G}_4, \mathcal W_0\rbrack_q}{(q^2-q^{-2})^2},
\\
 \mathcal W_{6} &=  \mathcal W_0 -
\frac{
\lbrack \mathcal W_1, \mathcal {\tilde G}_1\rbrack_q}{(q^2-q^{-2})^2}
-
\frac{
\lbrack \mathcal {\tilde G}_2,  \mathcal W_0\rbrack_q}{(q^2-q^{-2})^2}
-
\frac{
\lbrack  \mathcal W_1, \mathcal {\tilde G}_3\rbrack_q}{(q^2-q^{-2})^2}
-
\frac{
\lbrack \mathcal {\tilde G}_4, \mathcal W_0\rbrack_q}{(q^2-q^{-2})^2}
-
\frac{
\lbrack \mathcal W_1, \mathcal {\tilde G}_5\rbrack_q}{(q^2-q^{-2})^2},
\\
&\cdots
\end{align*}

\noindent 
The recursion shows that for  $k \geq 1$, 
the generators $ \mathcal W_{-k}$, $ \mathcal W_{k+1}$
are given as follows. For odd $k=2r+1$,
\begin{align}
\mathcal W_{-k} = \mathcal W_1
- 
\sum_{\ell=0}^r 
\frac{
\lbrack 
 \mathcal {\tilde G}_{2\ell+1},  \mathcal W_0\rbrack_q
}{(q^2-q^{-2})^2} 
-
\sum_{\ell=1}^r 
\frac{
\lbrack
 \mathcal W_1,
 \mathcal {\tilde G}_{2\ell} 
 \rbrack_q
}{(q^2-q^{-2})^2},
\label{eq:WmkA}
\\
\mathcal W_{k+1} =  \mathcal W_0
- 
\sum_{\ell=0}^r 
\frac{
\lbrack 
\mathcal W_1,
\mathcal {\tilde G}_{2\ell+1}\rbrack_q
}{(q^2-q^{-2})^2} 
-
\sum_{\ell=1}^r 
\frac{
\lbrack
 \mathcal {\tilde G}_{2\ell},
 \mathcal W_0
 \rbrack_q
}{(q^2-q^{-2})^2}.
\label{eq:Wkp1A}
\end{align}
For even $k=2r$,
\begin{align}
&\mathcal W_{-k} = \mathcal W_0
- 
\sum_{\ell=0}^{r-1} 
\frac{
\lbrack  \mathcal W_1,
 \mathcal {\tilde G}_{2\ell+1}\rbrack_q
}{(q^2-q^{-2})^2} 
-
\sum_{\ell=1}^r 
\frac{
\lbrack
  \mathcal {\tilde G}_{2\ell},
 \mathcal W_0
 \rbrack_q
}{(q^2-q^{-2})^2},
\label{eq:WmkB}
\\
\label{eq:Wkp1B}
 &\mathcal W_{k+1} =  W_1
- 
\sum_{\ell=0}^{r-1} 
\frac{
\lbrack 
\mathcal {\tilde G}_{2\ell+1}, \mathcal W_0\rbrack_q
}{(q^2-q^{-2})^2} 
-
\sum_{\ell=1}^r 
\frac{
\lbrack
 \mathcal W_1,
 \mathcal {\tilde G}_{2\ell} 
 \rbrack_q
}{(q^2-q^{-2})^2}. 
\end{align}
Next we use 
\eqref{eq:3p1} to
obtain the generators
$\lbrace \mathcal G_{k+1}\rbrace_{k \in \mathbb N}$:
\begin{align}
\mathcal G_{k+1} =  \mathcal {\tilde G}_{k+1} + (q+q^{-1}) 
\lbrack \mathcal W_1,  \mathcal W_{-k} \rbrack
\qquad \qquad (k \in \mathbb N).
\label{eq:getrid}
\end{align}
We have expressed the alternating generators \eqref{eq:4gens} in terms of
$\mathcal W_0$, $\mathcal W_1$, $\lbrace \mathcal {\tilde G}_{k+1} \rbrace_{k \in \mathbb N}$. We record the consequence.
\begin{lemma} 
\label{lem:minGen}
The algebra $\mathcal O_q$ is generated by $\mathcal W_0$, $\mathcal W_1$, $\lbrace \mathcal {\tilde G}_{k+1} \rbrace_{k \in \mathbb N}$.
  \end{lemma}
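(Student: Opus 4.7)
The plan is straightforward because almost all of the work has already been carried out in the display chain preceding the statement. By Definition \ref{def:Aq}, the algebra $\mathcal O_q$ is generated by the four families of alternating generators \eqref{eq:4gens}. So it suffices to show that each $\mathcal W_{-k}$, each $\mathcal W_{k+1}$, and each $\mathcal G_{k+1}$ lies in the subalgebra $\mathcal S \subseteq \mathcal O_q$ generated by $\mathcal W_0$, $\mathcal W_1$, $\lbrace \mathcal {\tilde G}_{k+1} \rbrace_{k \in \mathbb N}$.

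First I would observe that the relations \eqref{eq:3p2} and \eqref{eq:3p3} can be rearranged to solve for $\mathcal W_{-k-1}$ and $\mathcal W_{k+2}$ in terms of $\mathcal W_{k+1}$, $\mathcal W_{-k}$, and $\mathcal {\tilde G}_{k+1}$ (using $\rho = -(q^2-q^{-2})^2 \neq 0$). Specifically, from \eqref{eq:3p2},
\begin{align*}
\mathcal W_{-k-1} = \mathcal W_{k+1} - \frac{\lbrack \mathcal {\tilde G}_{k+1}, \mathcal W_0 \rbrack_q}{(q^2-q^{-2})^2},
\end{align*}
and from \eqref{eq:3p3},
\begin{align*}
\mathcal W_{k+2} = \mathcal W_{-k} - \frac{\lbrack \mathcal W_1, \mathcal {\tilde G}_{k+1} \rbrack_q}{(q^2-q^{-2})^2}.
\end{align*}

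Starting from $\mathcal W_0, \mathcal W_1 \in \mathcal S$ and iterating these two identities, an easy induction on $k$ produces the closed-form expressions \eqref{eq:WmkA}--\eqref{eq:Wkp1B} already displayed above. Each of these expressions is manifestly a polynomial expression in $\mathcal W_0$, $\mathcal W_1$, and the $\mathcal {\tilde G}_{k+1}$, so $\mathcal W_{-k}, \mathcal W_{k+1} \in \mathcal S$ for all $k \in \mathbb N$.

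Finally, relation \eqref{eq:3p1} can be rearranged as \eqref{eq:getrid}, namely
\begin{align*}
\mathcal G_{k+1} = \mathcal {\tilde G}_{k+1} + (q+q^{-1}) \lbrack \mathcal W_1, \mathcal W_{-k} \rbrack,
\end{align*}
which exhibits each $\mathcal G_{k+1}$ as an element of $\mathcal S$ once the $\mathcal W_{-k}$ are known to lie in $\mathcal S$. Since all alternating generators lie in $\mathcal S$, we conclude $\mathcal S = \mathcal O_q$. There is no real obstacle here beyond carefully executing the induction; all the algebraic content resides in the already-recorded recursions \eqref{eq:3p1}--\eqref{eq:3p3}, and the lemma is essentially a bookkeeping summary of the explicit formulas written out above it.
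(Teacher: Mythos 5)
Your proposal is correct and follows exactly the paper's own route: the paper's ``proof'' of this lemma is precisely the preceding display chain, in which \eqref{eq:3p2}, \eqref{eq:3p3} are rearranged (using $\rho=-(q^2-q^{-2})^2\neq 0$) to recursively express $\mathcal W_{-k}$, $\mathcal W_{k+1}$ via \eqref{eq:WmkA}--\eqref{eq:Wkp1B}, and then \eqref{eq:3p1} is rearranged into \eqref{eq:getrid} to recover $\mathcal G_{k+1}$. Your two recursion formulas and the final step match the paper's computation, so there is nothing to add.
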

  \begin{definition}
  \label{def:ess}
  \rm The $\mathcal O_q$-generators  from Lemma \ref{lem:minGen} will be called {\it essential}.
  \end{definition}

  \section{The algebra $\mathcal O^\vee_q$}
  
  \noindent In Definition \ref{def:ess} we defined the essential generators for $\mathcal O_q$. 
  Our next goal is to obtain a presentation of $\mathcal O_q$ by generators and relations, where the generators are the essential ones.   Let us consider what the relations should be. By \eqref{eq:3p10} we have
  $\lbrack \mathcal {\tilde G}_{k+1}, \mathcal {\tilde G}_{\ell+1} \rbrack=0$ for $k, \ell \in \mathbb N$. We have the relations in Lemmas \ref{lem:newrels1},  \eqref{eq:DG}.
  In the next result we list some additional relations satisfied by the essential generators.

\begin{lemma}
\label{lem:newrels}
For the algebra $\mathcal O_q$ the following relations hold for  $k\geq 1$:
\begin{align}
\label{eq:r3}
&\lbrack \mathcal {\tilde G}_{k+1}, \mathcal W_0 \rbrack = 
\frac{
\lbrack \mathcal W_0, \lbrack \mathcal W_0, \lbrack \mathcal W_1,
\mathcal {\tilde G}_k
\rbrack_q
\rbrack_q
\rbrack}{(q^2-q^{-2})^2},
\\
\label{eq:r4}
&\lbrack \mathcal W_1, \mathcal {\tilde G}_{k+1}\rbrack = 
\frac{
\lbrack\lbrack \lbrack \mathcal {\tilde G}_k, \mathcal W_0 \rbrack_q, 
\mathcal W_1 \rbrack_q,
\mathcal W_1
\rbrack}{(q^2-q^{-2})^2}.
\end{align}
\end{lemma}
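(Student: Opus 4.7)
The plan is to prove \eqref{eq:r3} directly from the defining relations of $\mathcal O_q$, and then obtain \eqref{eq:r4} from \eqref{eq:r3} by applying the antiautomorphism $\tau$ of Definition \ref{def:tauA}.

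For \eqref{eq:r3}, I first invoke \eqref{eq:3p3} with $k$ replaced by $k-1$ (legal since $k\geq 1$) to write
\[
[\mathcal W_1,\mathcal{\tilde G}_k]_q = \rho\,\mathcal W_{k+1} - \rho\,\mathcal W_{-(k-1)}.
\]
Next I observe that $\mathcal W_0$ and $\mathcal W_{-(k-1)}$ commute by \eqref{eq:3p4}, so $[\mathcal W_0,\mathcal W_{-(k-1)}]_q = (q-q^{-1})\mathcal W_0\mathcal W_{-(k-1)}$, which in turn gives $[\mathcal W_0,[\mathcal W_0,\mathcal W_{-(k-1)}]_q]=0$. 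Substituting the display above into the right side of \eqref{eq:r3} therefore collapses everything to a multiple of $[\mathcal W_0,[\mathcal W_0,\mathcal W_{k+1}]_q]$, with overall coefficient $\rho/(q^2-q^{-2})^2 = -1$.

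Then I use the elementary identity $[X,[X,Y]_q] = [X,[X,Y]]_q$ (both sides expand to $qX^2Y-(q+q^{-1})XYX+q^{-1}YX^2$) to turn $[\mathcal W_0,[\mathcal W_0,\mathcal W_{k+1}]_q]$ into $[\mathcal W_0,[\mathcal W_0,\mathcal W_{k+1}]]_q$. Now I substitute $[\mathcal W_0,\mathcal W_{k+1}]=(\mathcal{\tilde G}_{k+1}-\mathcal G_{k+1})/(q+q^{-1})$ from \eqref{eq:3p1}, and use the equality $[\mathcal W_0,\mathcal G_{k+1}]_q = [\mathcal{\tilde G}_{k+1},\mathcal W_0]_q$ from \eqref{eq:3p2} to eliminate $\mathcal G_{k+1}$. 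A short calculation gives $[\mathcal W_0,\mathcal{\tilde G}_{k+1}]_q - [\mathcal{\tilde G}_{k+1},\mathcal W_0]_q = (q+q^{-1})[\mathcal W_0,\mathcal{\tilde G}_{k+1}]$, so the whole right side simplifies to $[\mathcal W_0,\mathcal{\tilde G}_{k+1}] = -[\mathcal{\tilde G}_{k+1},\mathcal W_0]$, which combined with the earlier $-1$ factor yields \eqref{eq:r3}.

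For \eqref{eq:r4}, apply $\tau$ to both sides of \eqref{eq:r3}. Since $\tau$ is an antiautomorphism satisfying $\tau([A,B])=[\tau(B),\tau(A)]$ and $\tau([A,B]_q)=[\tau(B),\tau(A)]_q$, and since $\tau$ swaps $\mathcal W_0\leftrightarrow \mathcal W_1$ and fixes each $\mathcal{\tilde G}_{j}$, the left side $[\mathcal{\tilde G}_{k+1},\mathcal W_0]$ becomes $[\mathcal W_1,\mathcal{\tilde G}_{k+1}]$, and the right side becomes $[[[\mathcal{\tilde G}_k,\mathcal W_0]_q,\mathcal W_1]_q,\mathcal W_1]/(q^2-q^{-2})^2$, which is precisely the right side of \eqref{eq:r4}. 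No serious obstacle is anticipated; the main delicate points are the bookkeeping for the bracket-identity $[X,[X,Y]_q]=[X,[X,Y]]_q$ and the careful tracking of how $\tau$ acts on each nested $q$-bracket.
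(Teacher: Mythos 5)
Your proof is correct and follows essentially the same route as the paper's: both rest on eliminating $\mathcal G_{k+1}$ via \eqref{eq:3p1}, \eqref{eq:3p2}, invoking \eqref{eq:3p3} (with $k\mapsto k-1$) together with the commutativity from \eqref{eq:3p4}, using the identity $\lbrack X,\lbrack X,Y\rbrack_q\rbrack=\lbrack X,\lbrack X,Y\rbrack\rbrack_q$, and deducing \eqref{eq:r4} from \eqref{eq:r3} by applying the antiautomorphism $\tau$. The only difference is cosmetic: you traverse the chain of equalities from the right-hand side of \eqref{eq:r3} to the left, whereas the paper runs the same computation in the opposite direction.
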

\begin{proof} We first obtain  \eqref{eq:r3}. Eliminate $\mathcal G_{k+1}$ in \eqref{eq:3p2} using \eqref{eq:3p1}  to obtain        
\begin{align}
\lbrack \mathcal W_0, {\mathcal {\tilde G}}_{k+1} \rbrack=
\lbrack \mathcal W_0, \lbrack \mathcal W_0, \mathcal W_{k+1}\rbrack_q \rbrack.
\label{eq:elim3}
\end{align}
Now use in order
\eqref{eq:elim3},
\eqref{eq:3p4},
\eqref{eq:3p3}  
to obtain
\begin{align*}
\lbrack \mathcal W_0, \mathcal {\tilde G}_{k+1} \rbrack 
&= 
 \lbrack \mathcal W_0, \lbrack \mathcal W_0,  \mathcal W_{k+1}\rbrack_q \rbrack
\\
&= 
\lbrack \mathcal W_0, \lbrack \mathcal W_0,  \mathcal W_{k+1}\rbrack \rbrack_q
\\
&=
\lbrack \mathcal W_0, \lbrack \mathcal W_0,  
\mathcal W_{k+1}-\mathcal W_{1-k} \rbrack \rbrack_q
\\
&= 
\rho^{-1}\lbrack \mathcal W_0, \lbrack \mathcal W_0, \lbrack \mathcal W_1,
\mathcal {\tilde G}_k
\rbrack_q
\rbrack
\rbrack_q
\\
&= \rho^{-1}
\lbrack \mathcal W_0, \lbrack \mathcal W_0, \lbrack \mathcal W_1,
\mathcal {\tilde G}_k
\rbrack_q
\rbrack_q
\rbrack
\end{align*}
which implies \eqref{eq:r3}. To obtain  \eqref{eq:r4}, apply the antiautomorphism $\tau$ to each side of \eqref{eq:r3}.
\end{proof}
\noindent We just identified some relations that are satisfied by the essential generators of $\mathcal O_q$. We now define an algebra $\mathcal O^\vee_q$ by
generators and relations, using the essential generators and the relations we identified.

\begin{definition} \rm
\label{thm:m1} Define the algebra $\mathcal O^\vee_q$  by generators 
$\mathcal W_0$, $\mathcal W_1$, $\lbrace \mathcal {\tilde G}_{k+1} \rbrace_{k \in \mathbb N}$ and relations
\begin{enumerate}
\item[\rm (i)]
$\lbrack \mathcal W_0, \lbrack \mathcal W_0, \lbrack \mathcal W_0, \mathcal W_1\rbrack_q \rbrack_{q^{-1}} \rbrack =(q^2-q^{-2})^2 \lbrack \mathcal W_1, \mathcal W_0 \rbrack$;
\item[\rm (ii)]
$\lbrack \mathcal W_1, \lbrack \mathcal W_1, \lbrack \mathcal W_1, \mathcal W_0\rbrack_q \rbrack_{q^{-1}}\rbrack = (q^2-q^{-2})^2  \lbrack \mathcal W_0, \mathcal W_1 \rbrack$;
\item[\rm (iii)] 
$\lbrack \mathcal W_0, \mathcal {\tilde G}_1 \rbrack = 
\lbrack \mathcal W_0, \lbrack \mathcal W_0, \mathcal W_1 \rbrack_q \rbrack$;
\item[\rm (iv)] $\lbrack \mathcal {\tilde G}_1, \mathcal W_1 \rbrack = 
\lbrack \lbrack \mathcal W_0, \mathcal W_1 \rbrack_q, \mathcal W_1 \rbrack$;
\item[\rm (v)] 
for $k\geq 1$,
\begin{align*}
&\lbrack \mathcal {\tilde G}_{k+1}, \mathcal W_0 \rbrack = 
\frac{
\lbrack \mathcal W_0, \lbrack \mathcal W_0, \lbrack \mathcal W_1,
\mathcal {\tilde G}_k
\rbrack_q
\rbrack_q
\rbrack}{(q^2-q^{-2})^2};
\end{align*}
\item[\rm (vi)] for $k\geq 1$,
\begin{align*}
\lbrack \mathcal W_1, \mathcal {\tilde G}_{k+1}\rbrack = 
\frac{
\lbrack\lbrack \lbrack \mathcal {\tilde G}_k, \mathcal W_0 \rbrack_q, 
\mathcal W_1 \rbrack_q,
\mathcal W_1
\rbrack}{(q^2-q^{-2})^2};
\end{align*}
\item[\rm (vii)] for $k, \ell \in \mathbb N$,
\begin{align*}
\lbrack \mathcal {\tilde G}_{k+1}, \mathcal {\tilde G}_{\ell+1} \rbrack=0.
\end{align*}
\end{enumerate}
The $\mathcal O^\vee_q$-generators $\mathcal W_0$, $\mathcal W_1$, $\lbrace \mathcal {\tilde G}_{k+1} \rbrace_{k \in \mathbb N}$
are called {\it essential}.
\end{definition}
\begin{remark}\rm 
Referring to Definition \ref{thm:m1}, the relation (iii) (resp. (iv)) is obtained from relation (v) (resp. (vi)) by setting $k=0$ and using \eqref{eq:GG0}.
\end{remark}

\begin{lemma}
\label{lem:step1} There exists an algebra homomorphism $\natural : \mathcal O^\vee_q \to \mathcal O_q$ that sends
\begin{align*}
\mathcal W_0 \mapsto \mathcal W_0, \qquad
\mathcal W_1 \mapsto \mathcal W_1, \qquad
\mathcal {\tilde G}_{k+1} \mapsto \mathcal {\tilde G}_{k+1}, \qquad k \in \mathbb N.
\end{align*}
\noindent Moreover, $\natural $ is surjective.
  \end{lemma}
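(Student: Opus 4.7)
The plan is to invoke the universal property of algebras defined by generators and relations. I would define a map on the free algebra generated by the symbols $\mathcal W_0, \mathcal W_1, \{\mathcal{\tilde G}_{k+1}\}_{k \in \mathbb N}$ by sending each symbol to the element of $\mathcal O_q$ with the same name, and then verify that the seven defining relations (i)--(vii) of $\mathcal O^\vee_q$ from Definition \ref{thm:m1} all hold in $\mathcal O_q$. This will imply that the map descends to a well-defined algebra homomorphism $\natural : \mathcal O^\vee_q \to \mathcal O_q$ with the prescribed action on generators.

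The verification of the relations is essentially a matter of collecting results already established earlier in the paper. Relations (i) and (ii), namely the $q$-Dolan/Grady relations for $\mathcal W_0, \mathcal W_1$, are exactly \eqref{eq:qOns1p} and \eqref{eq:qOns2p} of Lemma \ref{eq:DG}. Relations (iii) and (iv) are exactly \eqref{eq:r1} and \eqref{eq:r2} of Lemma \ref{lem:newrels1}. Relations (v) and (vi) are exactly \eqref{eq:r3} and \eqref{eq:r4} of Lemma \ref{lem:newrels}. Finally, relation (vii) is the first half of \eqref{eq:3p10} in Definition \ref{def:Aq}, which is part of the defining presentation of $\mathcal O_q$.

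For the surjectivity statement, I would appeal directly to Lemma \ref{lem:minGen}, which asserts that the essential elements $\mathcal W_0, \mathcal W_1, \{\mathcal{\tilde G}_{k+1}\}_{k \in \mathbb N}$ generate $\mathcal O_q$ as an algebra. Since the image of $\natural$ contains each of these generators, the image is all of $\mathcal O_q$.

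Honestly, there is no real obstacle here: every ingredient has been assembled in the preceding sections, and the content of the lemma is really a bookkeeping statement confirming that the relations imposed in Definition \ref{thm:m1} were chosen to be valid in $\mathcal O_q$. The genuinely hard work, namely showing that $\natural$ is \emph{injective} so that $\mathcal O^\vee_q \cong \mathcal O_q$, is clearly being postponed to the later sections (7--10) where the two filtrations are compared; that step, not the present lemma, is where the substantive argument for Theorem \ref{thm:m1Int} will live.
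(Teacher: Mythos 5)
Your proposal is correct and follows exactly the same route as the paper: the paper's proof simply states that the essential generators of $\mathcal O_q$ satisfy the defining relations of $\mathcal O^\vee_q$ (which you have usefully itemized against Lemmas \ref{lem:newrels1}, \ref{eq:DG}, \ref{lem:newrels} and \eqref{eq:3p10}) and invokes Lemma \ref{lem:minGen} for surjectivity. Your version is just a more explicit bookkeeping of the same argument.
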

  \begin{proof} The algebra homomorphism $\natural $ exists because the essential $\mathcal O_q$-generators satisfy the defining relations for $\mathcal O^\vee_q$. The map $\natural$
  is surjective by Lemma \ref{lem:minGen}.
  \end{proof}

 \noindent We are going to prove that  $\natural$ is bijective.
The proof will be completed in Theorem \ref{thm:nat}.
  In the next three sections we describe a filtration of $\mathcal O_q$ and a filtration of $\mathcal O^\vee_q$ that will help us complete the proof.
  \medskip

 \medskip

\section{A filtration of $\mathcal O_q$}

\noindent Recall the filtration concept from Definition \ref{def:filtration}.
In this section, we describe a filtration of $\mathcal O_q$ that will help us prove that the map $\natural$ from Lemma \ref{lem:step1}  is bijective.

\begin{definition}\label{def:degreeO}
\rm
To each alternating  $\mathcal O_q$-generator we assign a degree, as follows. For $k \in \mathbb N$,
\bigskip
 
\centerline{
\begin{tabular}[t]{c|cccc}
  $u$ & $\mathcal G_{k+1}$ & $\mathcal W_{-k}$ & $\mathcal W_{k+1}$ & $\mathcal {\tilde G}_{k+1}$ 
   \\
\hline
${\rm deg}(u)$ & $2k+2$ & $2k+1$ & $2k+1$ & $2k+2$
\end{tabular}}
\bigskip

\end{definition}

\begin{definition}\label{def:AnO} \rm For notational convenience, abbreviate $\mathcal O = \mathcal O_q$. 
For $d \in \mathbb N$ let $\mathcal O_d$ denote the subspace of $\mathcal O$ spanned by the products $a_1a_2\cdots a_n$ of alternating generators
such that $n\in \mathbb N$ and $\sum_{i=1}^n {\rm deg}(a_i) \leq d$.
\end{definition}
\noindent Using Definition \ref{def:AnO} we routinely obtain
(i) $\mathcal O_0 = \mathbb F1$;
(ii) $\mathcal O_{d-1} \subseteq \mathcal O_d$ for $d\geq 1$;
(iii) $\mathcal O = \cup_{d \in \mathbb N} \mathcal O_d$;
(iv) $\mathcal O_r \mathcal O_s \subseteq \mathcal O_{r+s}$ for $r,s\in \mathbb N$.
By these comments and Definition \ref{def:filtration}  the sequence $\lbrace \mathcal O_d\rbrace_{d \in \mathbb N}$ is a filtration of $\mathcal O_q$. 

\begin{note}\label{rem:BO} \rm
In \cite[Section~7]{pbwqO} the filtration $\lbrace \mathcal O_d\rbrace_{d \in \mathbb N}$  is denoted $\lbrace B_d\rbrace_{d \in \mathbb N}$.
\end{note}
 
\begin{lemma} \label{lem:factor}
For the algebra $\mathcal O_q$ the following {\rm (i), (ii)} hold:
\begin{enumerate}
\item[\rm (i)] 
the subspace 
 $\mathcal O_1$ has basis $1, \mathcal W_0, \mathcal W_1$;
\item[\rm (ii)] for $d\geq 2$ we have
\begin{align}
\label{eq:odd}
\mathcal O_{d} =E_d+ \mathcal O_{d-1} + \sum_{k=1}^{d-1} {\rm Span}(\mathcal O_k \mathcal O_{d-k}),
\end{align}
\noindent where $E_d=0$ if $d$ is odd and $E_d=\mathbb F \mathcal {\tilde G}_n$ if $d=2n$ is even.
\end{enumerate}
\end{lemma}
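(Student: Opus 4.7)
The plan is to work directly from Definitions \ref{def:degreeO} and \ref{def:AnO}, reducing the claim to a case-by-case check on individual alternating generators of each degree. For part (i), every alternating generator has degree at least $1$, so the only spanning products with total degree $\leq 1$ are the empty product (yielding $1$) and the degree-$1$ generators $\mathcal W_0$ and $\mathcal W_1$ (the $k=0$ instances of $\mathcal W_{-k}$ and $\mathcal W_{k+1}$). Linear independence of $\lbrace 1, \mathcal W_0, \mathcal W_1\rbrace$ is immediate from the PBW basis in Proposition \ref{lem:pbw}.

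For part (ii), fix $d \geq 2$ and consider a spanning product $a_1 a_2 \cdots a_n$ of alternating generators with $\sum_{i=1}^n {\rm deg}(a_i) \leq d$. If the total degree is at most $d-1$ the product already lies in $\mathcal O_{d-1}$; if $n \geq 2$ and the total degree equals $d$, splitting off the first factor writes the product as an element of $\mathcal O_k \mathcal O_{d-k}$ with $1 \leq k \leq d-1$. Thus only the case $n=1$ with ${\rm deg}(a_1)=d$ remains, and the problem reduces to showing that every alternating generator of degree exactly $d$ belongs to $E_d + \mathcal O_{d-1} + \sum_{k=1}^{d-1}{\rm Span}(\mathcal O_k \mathcal O_{d-k})$.

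For odd $d = 2k+1$ with $k \geq 1$, the degree-$d$ alternating generators are $\mathcal W_{-k}$ and $\mathcal W_{k+1}$. The defining relations \eqref{eq:3p2} and \eqref{eq:3p3} taken at index $k-1$ give
\[
\rho\, \mathcal W_{-k} = \rho\, \mathcal W_k + [\mathcal{\tilde G}_k, \mathcal W_0]_q, \qquad \rho\, \mathcal W_{k+1} = \rho\, \mathcal W_{1-k} + [\mathcal W_1, \mathcal{\tilde G}_k]_q,
\]
where $\mathcal W_k$ and $\mathcal W_{1-k}$ have degree at most $d-2$ and hence lie in $\mathcal O_{d-1}$, while each $q$-commutator is a linear combination of products of a degree-$(d-1)$ element with a degree-$1$ element, hence lies in ${\rm Span}(\mathcal O_1 \mathcal O_{d-1}) + {\rm Span}(\mathcal O_{d-1}\mathcal O_1)$. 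Since $E_d = 0$, this handles odd $d$. For even $d = 2n$ with $n \geq 1$, the degree-$d$ generators are $\mathcal G_n$ and $\mathcal{\tilde G}_n$. Equation \eqref{eq:getrid} with $k = n-1$ gives $\mathcal G_n = \mathcal{\tilde G}_n + (q+q^{-1})[\mathcal W_1, \mathcal W_{-(n-1)}]$; the bracket is in ${\rm Span}(\mathcal O_1 \mathcal O_{d-1}) + {\rm Span}(\mathcal O_{d-1}\mathcal O_1)$, and $\mathcal{\tilde G}_n \in \mathbb F\mathcal{\tilde G}_n = E_d$, so both generators land in $E_d + \sum_{k=1}^{d-1} {\rm Span}(\mathcal O_k \mathcal O_{d-k})$.

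The main obstacle, modest as it is, is the degree-bookkeeping, together with the edge case $k=1$ in the odd analysis where $\mathcal W_{1-k}$ is $\mathcal W_0$ rather than a generator $\mathcal W_{-(k-1)}$ with positive index; in either interpretation the degree is at most $d-2$, so the reduction proceeds uniformly, and no serious technical difficulty is expected beyond careful accounting.
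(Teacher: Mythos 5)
Your proof is correct and follows essentially the same route as the paper: reduce to single generators of degree exactly $d$, then rewrite them using the defining relations. The only cosmetic difference is that for odd $d$ you apply the one-step recursions from \eqref{eq:3p2}, \eqref{eq:3p3} directly, while the paper cites the unrolled closed forms \eqref{eq:WmkA}--\eqref{eq:Wkp1B}, which are derived from those same relations.
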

\begin{proof}(i) The elements  $\mathcal W_0, \mathcal W_1$ are the unique alternating generators of degree 1. The elements $1, \mathcal W_0, \mathcal W_1$ are linearly independent by
Proposition  \ref{lem:pbw}.
\\
\noindent (ii) Let $\mathcal O'_d$ denote the vector space on the right in \eqref{eq:odd}. We show that $\mathcal O_d = \mathcal O'_d$. We have $\mathcal O_d \supseteq \mathcal O'_d$ by 
Definition \ref{def:degreeO} and the comments below Definition \ref{def:AnO}.
Next we show that $\mathcal O_d \subseteq \mathcal O'_d$.
The vector space $\mathcal O_{d}$ is spanned by the  products $a_1a_2\cdots a_r$ of alternating generators
such that $r \in \mathbb N$ and $\sum_{i=1}^r {\rm deg}(a_i) \leq d$.
Let $w=a_1a_2\cdots a_r$ denote such a product. We show that $w \in \mathcal O'_d$.
We may assume that $\sum_{i=1}^r {\rm deg}(a_i) =d$;
otherwise $w\in \mathcal O_{d-1}\subseteq \mathcal O'_d$. Assume for the moment that $r\geq 2$. Write $w=w_1 w_2$ with $w_1=a_1$ and $w_2 = a_2\cdots a_r$. Let $k$ denote the degree of $a_1$.
By construction  $1 \leq k \leq d-1$.
Also by construction $w_1 \in \mathcal O_k$ and $w_2 \in \mathcal O_{d-k}$.  By these comments $w=w_1 w_2 \in \mathcal O_k \mathcal O_{d-k} \subseteq \mathcal O'_d$.
Next assume that $r=1$, so $w=a_1$ and ${\rm deg}(a_1)=d$. Suppose that $d=2n+1$ is odd. Then $w=\mathcal W_{-n}$ or $w=\mathcal W_{n+1}$. In either case
$w \in \mathcal O_{d-1}+ {\rm Span}(\mathcal O_1 \mathcal O_{d-1}) + {\rm Span}(\mathcal O_{d-1} \mathcal O_1)\subseteq \mathcal O'_d$ in 
view of \eqref{eq:WmkA}--\eqref{eq:Wkp1B}.
Next suppose that $d=2n$ is even. Then $w=\mathcal G_n$ or $w=\mathcal {\tilde G}_n$. In the first case
 $w \in E_d + {\rm Span}(\mathcal O_1 \mathcal O_{d-1}) + {\rm Span}(\mathcal O_{d-1}\mathcal O_1) \subseteq \mathcal O'_d$ in view of \eqref{eq:getrid}. In the second case $w \in E_d \subseteq \mathcal O'_d$.
We have shown that $\mathcal O_d \subseteq \mathcal O'_d$. By the above comments $\mathcal O_d=\mathcal O'_d$.
\end{proof}

\section{A filtration of $\mathcal O_q$, cont.}
\noindent In the previous section we described a filtration of $\mathcal O_q$. In this section we consider the filtration from another point of view.
\medskip

\noindent 
In \cite[Theorem~9.14]{pbwqO} we gave an algebra isomorphism $\phi: O_q \otimes \mathbb F \lbrack z_1, z_2, \ldots \rbrack \to \mathcal O_q$. The isomorphism is described as follows.
 In \cite[Proposition~8.12, Lemma~8.16]{pbwqO}
we displayed some elements $\lbrace \mathcal Z_n \rbrace_{n=1}^\infty$ in the center of $\mathcal O_q$.
 The map $\phi$ sends
\begin{align}
W_0 \otimes 1 \mapsto \mathcal W_0, \qquad \quad
W_1 \otimes 1 \mapsto \mathcal W_1, \qquad \quad 
1 \otimes z_n \mapsto \mathcal Z_n, \qquad n\geq 1.
\label{eq:phiAction}
\end{align}

\noindent Recall the filtration $\lbrace \mathcal O_d \rbrace_{d \in \mathbb N}$ of $\mathcal O_q$ from below Definition \ref{def:AnO}.
Our next goal is to describe $\phi^{-1}(\mathcal O_d)$ for $d \in \mathbb N$.
In this description, we will use 
a filtration of $O_q$ and a grading of $\mathbb F \lbrack z_1, z_2, \ldots \rbrack$. 
\medskip

\noindent We recall from \cite[Section~4]{pospart}
a filtration of $O_q$. For notational convenience abbreviate $O=O_q$.
For $d \in \mathbb N$ let  $O_d$  denote the subspace of $O$
spanned by the products
$g_1g_2\cdots g_n$ such that $0 \leq n \leq d$ and
$g_i$ is among $W_0, W_1$ for $1 \leq i \leq n$.
We routinely obtain
 (i) $O_0 = \mathbb F 1$; 
(ii) 
 $O_{d-1} \subseteq O_d$ for $d\geq 1$;
 (iii) $O = \cup_{d \in \mathbb N} O_d$;
 (iv)  $O_r O_s \subseteq  O_{r+s}$
for $r,s \in \mathbb N$. 
By these comments and Definition \ref{def:filtration} the sequence
$\lbrace O_d \rbrace_{d \in \mathbb N}$ 
is a filtration of $O_q$.
\medskip

\noindent Next consider the algebra $\mathbb F \lbrack z_1, z_2, \ldots \rbrack$. 
First we describe a basis for the vector space $\mathbb F \lbrack z_1, z_2, \ldots \rbrack$.
For $n \in \mathbb N$, a {\it partition of $n$} is a sequence $\lambda = \lbrace \lambda_i \rbrace_{i=1}^\infty$
of natural numbers such that $\lambda_i \geq \lambda_{i+1}$ for $i\geq 1$ and $n=\sum_{i=1}^\infty \lambda_i$.
Let the set $\Lambda_n$ consist of the partitions of $n$. Define $\Lambda = \cup_{n \in \mathbb N} \Lambda_n$. 
For $\lambda \in \Lambda$ define $z_\lambda = \prod_{i=1}^\infty z_{\lambda_i}$. The elements 
$\lbrace z_\lambda\rbrace_{\lambda \in \Lambda}$ form a basis for the vector space $\mathbb F \lbrack z_1, z_2, \ldots \rbrack$.
\medskip

\noindent
Next we construct a grading for the algebra $\mathbb F \lbrack z_1, z_2, \ldots \rbrack$.
For notational convenience abbreviate  $Z=\mathbb F \lbrack z_1, z_2, \ldots \rbrack$.
  For $n \in \mathbb N$ let $Z_n$ denote the subspace of
$Z$ with basis $\lbrace Z_\lambda \rbrace_{\lambda \in \Lambda_n}$. For example $ Z_0 = \mathbb F 1$.
The sum $Z = \sum_{n\in \mathbb N} Z_n$ is direct. Moreover $Z_r Z_s\subseteq Z_{r+s}$
for $r,s\in \mathbb N$. By these comments and Definition \ref{def:gr}
the subspaces $\lbrace Z_n \rbrace_{n\in \mathbb N}$ form
a grading of the algebra $Z$. 
\medskip

\begin{proposition}\label{def:cOn} 
For $d \in \mathbb N$,
\begin{align}
\label{eq:skip}
\phi^{-1}(\mathcal O_d) = \sum_{k=0}^{\lfloor d/2 \rfloor}  O_{d-2k} \otimes Z_{k}.
\end{align}
\end{proposition}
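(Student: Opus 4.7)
The plan is to prove both inclusions in \eqref{eq:skip}. Write $\mathcal O'_d := \phi\bigl(\sum_{k=0}^{\lfloor d/2\rfloor} O_{d-2k} \otimes Z_k\bigr)$; the goal is to show $\mathcal O'_d = \mathcal O_d$. A key preliminary, to be imported from \cite[Proposition~8.12, Lemma~8.16]{pbwqO}, is that each central element $\mathcal Z_n$ lies in $\mathcal O_{2n}$, and moreover that its expansion in terms of the alternating generators has a nonzero contribution proportional to $\tilde{\mathcal G}_n$ modulo the terms already captured by $\mathcal O_{2n-1}$ and by the products $\mathcal O_k \mathcal O_{2n-k}$ for $1 \leq k \leq 2n-1$.

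The easy direction is $\mathcal O'_d \subseteq \mathcal O_d$. It suffices to verify $\phi(O_{d-2k} \otimes Z_k) \subseteq \mathcal O_d$ for each $0 \leq k \leq \lfloor d/2 \rfloor$. The factor $O_{d-2k}$ is spanned by products of at most $d-2k$ copies of $W_0, W_1$, which $\phi$ sends to products of $\mathcal W_0, \mathcal W_1$, each of filtered degree $1$ by Lemma \ref{lem:factor}(i), hence lying in $\mathcal O_{d-2k}$. The factor $Z_k$ has basis $\{z_\lambda : \lambda \in \Lambda_k\}$, and $\phi(1 \otimes z_\lambda) = \prod_i \mathcal Z_{\lambda_i} \in \mathcal O_{2k}$ by the preliminary and the multiplicativity of the filtration $\{\mathcal O_d\}$. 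Multiplying yields $\phi(O_{d-2k} \otimes Z_k) \subseteq \mathcal O_{d-2k}\cdot \mathcal O_{2k} \subseteq \mathcal O_d$.

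For the reverse inclusion $\mathcal O_d \subseteq \mathcal O'_d$, I would induct on $d$. The cases $d=0,1$ are immediate from Lemma \ref{lem:factor}(i). For $d \geq 2$, apply Lemma \ref{lem:factor}(ii) to decompose $\mathcal O_d$ into $E_d$, $\mathcal O_{d-1}$, and $\sum_{k=1}^{d-1}\mathrm{Span}(\mathcal O_k \mathcal O_{d-k})$. The piece $\mathcal O_{d-1}$ lies in $\mathcal O'_{d-1} \subseteq \mathcal O'_d$ by induction. For $1 \leq k \leq d-1$, $\mathcal O_k \mathcal O_{d-k} \subseteq \mathcal O'_k \mathcal O'_{d-k} \subseteq \mathcal O'_d$, where the second step uses that $\phi$ is an algebra homomorphism together with $O_a O_b \subseteq O_{a+b}$ and $Z_a Z_b \subseteq Z_{a+b}$. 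For $E_d$: when $d$ is odd, $E_d = 0$ and there is nothing to prove. When $d=2n$ is even, $E_d = \mathbb F\,\tilde{\mathcal G}_n$, and one must show $\tilde{\mathcal G}_n \in \mathcal O'_{2n}$. The preliminary gives $\mathcal Z_n = \phi(1 \otimes z_n) \in \mathcal O'_{2n}$ expressible as a nonzero scalar multiple of $\tilde{\mathcal G}_n$ plus terms which, by the inductive hypothesis and the arguments just given for $\mathcal O_{2n-1}$ and the products $\mathcal O_k \mathcal O_{2n-k}$, already lie in $\mathcal O'_{2n}$; solving for $\tilde{\mathcal G}_n$ completes the step.

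The main obstacle is precisely this last calculation at $E_d$: one needs a description of $\mathcal Z_n$ in terms of the alternating generators sharp enough to isolate $\tilde{\mathcal G}_n$ as its leading contribution in the filtration $\{\mathcal O_d\}$, and this rests on the explicit construction of the central elements in \cite[Proposition~8.12, Lemma~8.16]{pbwqO}. Modulo that input, the rest of the proof is the clean inductive bookkeeping described above.
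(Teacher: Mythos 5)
Your argument is sound, and the one place where it is not self-contained is exactly the place you flag. For this proposition the paper itself offers no argument at all: its proof is the single citation ``By [Definition~9.7] and the proof of [Theorem~9.14] of \cite{pbwqO}.'' What you have written is a reconstruction of the argument that the cited proof must contain: the forward inclusion $\phi\bigl(\sum_k O_{d-2k}\otimes Z_k\bigr)\subseteq\mathcal O_d$ follows from $\mathcal Z_n\in\mathcal O_{2n}$ and multiplicativity of the filtration, and the reverse inclusion goes by induction on $d$ through the decomposition of Lemma \ref{lem:factor}(ii), the only nontrivial step being $E_{2n}=\mathbb F\,\mathcal{\tilde G}_n\subseteq\mathcal O'_{2n}$. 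Your isolation of the critical input is correct: in \cite{pbwqO} each $\mathcal Z_n$ is an explicit linear combination of products of alternating generators of total degree $2n$, in which $\mathcal{\tilde G}_n$ and $\mathcal G_n$ occur with scalar coefficients (because $\mathcal G_0=\mathcal{\tilde G}_0$ is a scalar), and \eqref{eq:getrid} converts $\mathcal G_n$ into $\mathcal{\tilde G}_n$ plus a product of lower-degree generators; the resulting coefficient of $\mathcal{\tilde G}_n$ is nonzero, which is what lets you solve for $\mathcal{\tilde G}_n$. So your proof is correct as a conditional argument and becomes unconditional once that computation is imported from \cite{pbwqO}; it is in substance the same route the paper points to, with the bookkeeping actually written out rather than cited.
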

\begin{proof} By \cite[Definition~9.7]{pbwqO} and the proof of \cite[Theorem~9.14]{pbwqO}.
\end{proof}

\section{A filtration of  $\mathcal O^\vee_q$}

\noindent In the previous two sections, we described a filtration of $\mathcal O_q$ from several points of view.
In this section, we describe a filtration of $\mathcal O^\vee_q$.

\begin{definition}\label{def:degreeV}
\rm
To each essential $\mathcal O^\vee_q$-generator  we assign a degree, as follows: 
\bigskip
 
\centerline{
\begin{tabular}[t]{c|ccc}
  $u$ & $\mathcal W_0$ & $\mathcal W_1$ & $\mathcal {\tilde G}_{k+1}$ 
   \\
\hline
${\rm deg}(u)$ & $1$ & $1$ & $2k+2$
\end{tabular}}
\noindent In the above table $k \in \mathbb N$.
\bigskip

\end{definition}

\begin{definition}\label{def:AnV} \rm For notational convenience, abbreviate $\mathcal O^\vee = \mathcal O^\vee_q$. 
For $d \in \mathbb N$ let $\mathcal O^\vee_d$ denote the subspace of $\mathcal O^\vee$ spanned by the products $a_1a_2\cdots a_n$ of essential generators
such that $n \in \mathbb N$ and $\sum_{i=1}^n {\rm deg}(a_i) \leq d$. 
\end{definition}
 \noindent Using Definition \ref{def:AnV} we routinely obtain
 (i) $\mathcal O^\vee_0 = \mathbb F1$;
 (ii) $\mathcal O^\vee_{d-1} \subseteq \mathcal O^\vee_d$ for $d\geq 1$;
(iii) $\mathcal O^\vee = \cup_{d \in \mathbb N} \mathcal O^\vee_d$;
(iv) $\mathcal O^\vee_r \mathcal O^\vee_s \subseteq \mathcal O^\vee_{r+s}$ for $r,s\in \mathbb N$.
By these comments and Definition \ref{def:filtration}, the sequence $\lbrace \mathcal O^\vee_d\rbrace_{d \in \mathbb N}$ is a filtration of $\mathcal O^\vee_q$. 
\medskip

 \noindent The following description of $\lbrace \mathcal O^\vee_d \rbrace_{d \in \mathbb N}$ is reminiscent of Lemma \ref{lem:factor}.
 
\begin{lemma} \label{lem:factorV}
For the algebra $\mathcal O^\vee_q$ the following {\rm (i), (ii)} hold:
\begin{enumerate}
\item[\rm (i)] 
the subspace 
 $\mathcal O^\vee_1$ has basis $1, \mathcal W_0, \mathcal W_1$;
\item[\rm (ii)] for $d\geq 2$ we have
\begin{align}
\label{eq:oddV}
\mathcal O^\vee_{d} =E^\vee_d+ \mathcal O^\vee_{d-1} + \sum_{k=1}^{d-1} {\rm Span}(\mathcal O^\vee_k \mathcal O^\vee_{d-k}),
\end{align}
\noindent where $E^\vee_d=0$ if $d$ is odd and $E^\vee_d=\mathbb F \mathcal {\tilde G}_n$ if $d=2n$ is even.
\end{enumerate}
\end{lemma}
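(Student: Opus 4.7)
The plan is to mirror the proof of Lemma \ref{lem:factor}, taking advantage of the fact that the essential generators have a simpler degree profile than the alternating ones: only $\mathcal W_0, \mathcal W_1$ (degree 1) and the $\mathcal {\tilde G}_{k+1}$ (degree $2k+2$) appear, so in particular there are no essential generators of odd degree $\geq 3$ and a unique essential generator of each even degree. For (i), I would first observe that the essential generators of degree at most 1 are precisely $\mathcal W_0$ and $\mathcal W_1$, so $\mathcal O^\vee_1$ is spanned by $1, \mathcal W_0, \mathcal W_1$. To see linear independence, I would apply the surjection $\natural:\mathcal O^\vee_q \to \mathcal O_q$ from Lemma \ref{lem:step1}; any linear dependence in $\mathcal O^\vee_q$ would push down to a dependence among $1, \mathcal W_0, \mathcal W_1$ in $\mathcal O_q$, contradicting Proposition \ref{lem:pbw}.

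For (ii) I would prove the two containments separately. The inclusion $\supseteq$ is immediate from the filtration axioms recorded below Definition \ref{def:AnV} together with the observation that $\mathcal {\tilde G}_n \in \mathcal O^\vee_{2n}$. For the reverse inclusion, I would take a typical spanning word $w = a_1 a_2 \cdots a_r$ of essential generators with $\sum_i \deg(a_i) \leq d$ and split into cases. If the total degree is strictly less than $d$, then $w \in \mathcal O^\vee_{d-1}$, which is the second summand on the right. Otherwise the total degree equals $d$; if $r \geq 2$, I would write $w = a_1 \cdot (a_2\cdots a_r)$ and set $k = \deg(a_1)$. Since every essential generator has positive degree and the trailing factors contribute at least $1$, we get $1 \leq k \leq d-1$ and $w \in \mathcal O^\vee_k \mathcal O^\vee_{d-k}$, landing inside the rightmost sum.

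The remaining case is $r = 1$ with $\deg(w) = d \geq 2$. Because $\mathcal W_0$ and $\mathcal W_1$ have degree $1$, the only essential generators of degree $\geq 2$ are the $\mathcal {\tilde G}_{k+1}$, whose degrees $2k+2$ are even. Hence when $d$ is odd this case is vacuous, matching $E^\vee_d = 0$; when $d = 2n$ is even, necessarily $w = \mathcal {\tilde G}_n \in \mathbb F \mathcal {\tilde G}_n = E^\vee_d$. This single-generator subcase is the only place where the specific degree assignment matters, and it is precisely where the argument is cleaner than in Lemma \ref{lem:factor}: there is no need to invoke the substitutions \eqref{eq:WmkA}--\eqref{eq:Wkp1B} or \eqref{eq:getrid}, since the generators $\mathcal W_{\pm k}$ and $\mathcal G_{k+1}$ are absent from the essential generating set. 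Consequently I do not expect any genuine obstacle; the proof is essentially bookkeeping, with linear independence in (i) being the only point that requires an external input (the surjection $\natural$ combined with Proposition \ref{lem:pbw}).
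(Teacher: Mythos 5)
Your proposal is correct and follows essentially the same route as the paper: the same two-containment argument for (ii) with the case split on word length, the same observation that a single essential generator of degree $d\geq 2$ must be $\mathcal {\tilde G}_n$ with $d=2n$, and the same use of the surjection $\natural$ to transfer linear independence in (i) (the paper cites Lemma \ref{lem:factor}(i), which itself rests on Proposition \ref{lem:pbw}). No gaps.
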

\begin{proof}(i) The elements  $\mathcal W_0, \mathcal W_1$ are the unique essential $\mathcal O^\vee_q$-generators of degree 1. The elements $1, \mathcal W_0, \mathcal W_1$ are linearly independent in $\mathcal O^\vee_q$, because
their $\natural$-images in $\mathcal O_q$ are linearly independent by Lemma \ref{lem:factor}(i).
\\
\noindent (ii) Let $\mathcal O^{\vee \prime}_d$ denote the vector space on the right in \eqref{eq:oddV}. We show that $\mathcal O^\vee_d = \mathcal O^{\vee \prime}_d$. We have $\mathcal O^\vee_d \supseteq \mathcal O^{\vee \prime}_d$ by 
Definition \ref{def:degreeV} and the comments below Definition \ref{def:AnV}.
Next we show that $\mathcal O^\vee_d \subseteq \mathcal O^{\vee \prime}_d$.
The vector space $\mathcal O^\vee_{d}$ is spanned by the  products $a_1a_2\cdots a_r$ of essential generators
such that $r \in \mathbb N$ and $\sum_{i=1}^r {\rm deg}(a_i) \leq d$.
Let $w=a_1a_2\cdots a_r$ denote such a product. We show that  $w \in \mathcal O^{\vee \prime}_d$.
We may assume that $\sum_{i=1}^r {\rm deg}(a_i) =d$;
otherwise $w\in \mathcal O^\vee_{d-1}\subseteq \mathcal O^{\vee \prime}_d$. Assume for the moment that $r\geq 2$. Write $w=w_1 w_2$ with $w_1=a_1$ and $w_2 = a_2\cdots a_r$. Let $k$ denote the degree of $a_1$.
By construction  $1 \leq k \leq d-1$.
Also by construction $w_1 \in \mathcal O^\vee_k$ and $w_2 \in \mathcal O^\vee_{d-k}$.  By these comments $w=w_1 w_2 \in \mathcal O^\vee_k \mathcal O^\vee_{d-k} \subseteq \mathcal O^{\vee \prime}_d$.
Next assume that $r=1$, so $w=a_1$ and ${\rm deg}(a_1)=d$. By assumption $d\geq 2$. The generators $\mathcal W_0, \mathcal W_1$ have degree 1,  so
 $d=2n$ is even and $w=\mathcal {\tilde G}_n$.  Consequently $w \in E^\vee_d \subseteq \mathcal O^{\vee \prime}_d$.
We have shown that $\mathcal O^\vee_d \subseteq \mathcal O^{\vee\prime}_d$. By the above comments $\mathcal O^\vee_d=\mathcal O^{\vee\prime}_d$.
\end{proof}

 \noindent The following description of $\lbrace \mathcal O^\vee_d \rbrace_{d \in \mathbb N}$ is reminiscent of Proposition \ref{def:cOn}.

\begin{proposition}\label{def:AnV2} 
For $d \in \mathbb N$ the subspace $\mathcal O^\vee_d$ is spanned by the products $a_1a_2\cdots a_n$ of essential generators
such that both
\begin{enumerate}
\item[\rm (i)]  $n \in \mathbb N$ and $\sum_{i=1}^n {\rm deg}(a_i) \leq d$;
\item[\rm (ii)]  ${\rm deg}(a_i) = 1$ implies ${\rm deg}(a_{i-1})=1$ for $2 \leq i\leq n$.
\end{enumerate}
\end{proposition}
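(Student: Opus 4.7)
The plan is to first observe that condition (ii) is equivalent to requiring that every occurrence of $\mathcal W_0$ or $\mathcal W_1$ in $a_1\cdots a_n$ precedes every occurrence of any $\mathcal {\tilde G}_{k+1}$: condition (ii) forces $\deg(a_j)=1$ for all $j\leq i$ whenever $\deg(a_i)=1$. Call such a product \emph{admissible}. The inclusion $\supseteq$ of the proposition is immediate from Definition \ref{def:AnV}. For $\subseteq$, it suffices to show that every product $w=a_1\cdots a_n$ of essential generators with $\sum_{i=1}^n{\rm deg}(a_i)\leq d$ can be expressed in $\mathcal O^\vee_q$ as a linear combination of admissible products of total degree $\leq d$.

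To each such product associate the pair
\begin{align*}
M(w) = \sum_{i\,:\,a_i=\mathcal {\tilde G}_{k_i+1}}(k_i+1), \qquad b(w) = \#\{(i,j):\,i<j,\,{\rm deg}(a_i)\geq 2,\,{\rm deg}(a_j)=1\},
\end{align*}
and induct on $(M(w),b(w))$ in lexicographic order. If $b(w)=0$, then $w$ is already admissible. Otherwise some adjacent pair has $a_{i-1}=\mathcal {\tilde G}_{k+1}$ and $a_i\in\{\mathcal W_0,\mathcal W_1\}$. By the Remark following Definition \ref{thm:m1}, relations (iii), (iv) of that definition are the $k=0$ case of (v), (vi) after invoking \eqref{eq:GG0}, so (v), (vi) apply uniformly for all $k\geq 0$ and yield
\begin{align*}
a_{i-1}a_i = a_i a_{i-1} + C,
\end{align*}
where $C$ is a sum of length-$4$ monomials in $\mathcal W_0,\mathcal W_1,\mathcal {\tilde G}_k$ (length-$3$ monomials in only $\mathcal W_0,\mathcal W_1$ when $k=0$). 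Substituting this into $w$ gives $w=w'+\sum_\alpha w_\alpha$, where $w'$ is $w$ with $a_{i-1}$ and $a_i$ swapped, and each $w_\alpha$ replaces the pair $a_{i-1}a_i$ by one of the monomials constituting $C$.

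Both relations (v), (vi) are homogeneous of total degree $2k+3$ with respect to Definition \ref{def:degreeV}, so $w'$ and the $w_\alpha$ each have total degree at most $d$. The adjacent swap preserves the multiset of generators, hence $M(w')=M(w)$, while a direct bookkeeping on position-pairs involving $\{i-1,i\}$ shows $b(w')=b(w)-1$. For each $w_\alpha$, the generator $\mathcal {\tilde G}_{k+1}$ has been replaced by $\mathcal {\tilde G}_k$, or entirely eliminated when $k=0$, so $M(w_\alpha)<M(w)$. In both cases the lex-pair $(M,b)$ strictly decreases, and the inductive hypothesis applies. The main obstacle is the bookkeeping in this inductive step---checking the degree homogeneity of relations (iii)--(vi), the precise decrement of $b$ under a single adjacent swap, and the treatment of the $k=0$ case via \eqref{eq:GG0}---each of which is a finite verification.
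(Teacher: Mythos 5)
Your proof is correct and follows essentially the same route as the paper's: both arguments straighten a product by locating an adjacent pair $\mathcal {\tilde G}_{k+1}\mathcal W_\xi$ and using relations (iii)--(vi) of Definition \ref{thm:m1} to swap it at the cost of degree-preserving extra terms in which $\mathcal {\tilde G}_{k+1}$ is replaced by $\mathcal {\tilde G}_k$ (or disappears when $k=0$). The only difference is the termination bookkeeping: the paper runs a minimal-counterexample argument on (maximal length, minimal inversion count), using that the commutator terms are strictly longer, whereas you run a well-founded lexicographic induction on (total $\mathcal {\tilde G}$-weight, inversion count), using that the commutator terms have strictly smaller $\mathcal {\tilde G}$-weight; both are valid.
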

\begin{proof} Consider a sequence  $(a_1,a_2,\ldots, a_n)$ of essential generators that satisfies (i). Call $n$ the length of the sequence, and note that $n\leq d$.
By an inversion for $(a_1,a_2, \ldots, a_n)$ we mean an ordered pair of integers $(i,j)$ such that  $1 \leq i<j\leq n$ and ${\rm deg}(a_i) \not=1$ and ${\rm deg}(a_j)=1$.
The sequence $(a_1,a_2,\ldots, a_n)$ has no inversions if and only if it satisfies (ii).
By Definition \ref{def:AnV}, $\mathcal O^\vee_d $ is  spanned by the products $a_1a_2\cdots a_n$ of essential generators that satisfy (i).
Let $U_d$ denote the span of the products $a_1a_2\cdots a_n$ of essential generators that satisfy (i), (ii).
So  $U_d \subseteq  \mathcal O^\vee_d $. We show that  $U_d =  \mathcal O^\vee_d $. To do this, we assume that $U_d \not=  \mathcal O^\vee_d $ and get a contradiction.
By the assumption, there exists a sequence $(a_1, a_2, \ldots, a_n)$ of essential generators that satisfies (i) and the product $a_1a_2\cdots a_n$ is not contained in $U_d$. Call such a sequence a counterexample (CE).
Let $N$ denote the maximal length of a CE.
Let $M$ denote the minimal number of inversions possessed by a CE of length $N$. Note that $M\geq 1$.
There exists a CE $(a_1,a_2, \ldots, a_N)$ that has exactly  $M$ inversions.
Since $(a_1,a_2,\ldots, a_N)$ is a CE, it does not satisfy (ii). So there exists an integer $i$ $(2 \leq i \leq N)$ such that ${\rm deg}(a_{i-1}) \not=1$ and ${\rm deg}(a_i)=1$.
Define the products $x=a_1 a_2 \cdots a_{i-2}$ and $y = a_{i+1} \cdots a_{N-1} a_N$. So $a_1a_2 \cdots a_N = x a_{i-1}a_i y$.
By construction exists $k \in \mathbb N$ such that  $a_{i-1} = \mathcal {\tilde G}_{k+1}$. Also by construction, $a_i =\mathcal W_\xi$ where $\xi \in \lbrace 0,1\rbrace$. 
We claim that $x \lbrack a_{i-1}, a_i\rbrack y\in U_d$. To prove the claim, we separate the cases $k\geq 1$ and $k=0$. Until further notice, assume that $k\geq 1$.
By the form of the relations (v), (vi) in 
Definition \ref{thm:m1}, the commutator $\lbrack a_{i-1}, a_i \rbrack$ is a linear combination of
\begin{align*}
& \mathcal {\tilde G}_{k} \mathcal W_{1-\xi} \mathcal W_{\xi} \mathcal W_{\xi}, \qquad
\mathcal W_{1-\xi} \mathcal {\tilde G}_{k} \mathcal W_{\xi} \mathcal W_{\xi}, \qquad
\mathcal W_{\xi} \mathcal {\tilde G}_{k} \mathcal W_{1-\xi} \mathcal W_{\xi}, 
\\
&\mathcal W_{\xi} \mathcal W_{1-\xi}  \mathcal {\tilde G}_{k} \mathcal W_{\xi}, \qquad
\mathcal W_{\xi} \mathcal W_{\xi}  \mathcal {\tilde G}_{k} \mathcal W_{1-\xi}, \qquad
\mathcal W_{\xi} \mathcal W_{\xi} \mathcal W_{1-\xi}   \mathcal {\tilde G}_{k}.
   \end{align*}
   Therefore $x\lbrack a_{i-1}, a_i\rbrack y$ is a linear combination of
   \begin{align}
    \label{eq:longer}
& x\mathcal {\tilde G}_{k} \mathcal W_{1-\xi} \mathcal W_{\xi} \mathcal W_{\xi}y, \qquad
x\mathcal W_{1-\xi} \mathcal {\tilde G}_{k} \mathcal W_{\xi} \mathcal W_{\xi}y, \qquad
x\mathcal W_{\xi} \mathcal {\tilde G}_{k} \mathcal W_{1-\xi} \mathcal W_{\xi}y, 
\\
&x\mathcal W_{\xi} \mathcal W_{1-\xi}  \mathcal {\tilde G}_{k} \mathcal W_{\xi}y, \qquad
x\mathcal W_{\xi} \mathcal W_{\xi}  \mathcal {\tilde G}_{k} \mathcal W_{1-\xi}y, \qquad
x\mathcal W_{\xi} \mathcal W_{\xi} \mathcal W_{1-\xi}   \mathcal {\tilde G}_{k}y.
 \label{eq:longer2}
   \end{align}
 \noindent Let $u$ denote the element on the left in \eqref{eq:longer}. Note that $u$ is the product of the terms in the sequence
 $(a_1, a_2, \ldots, a_{i-2}, \mathcal {\tilde G}_k, \mathcal W_{1-\xi}, \mathcal W_{\xi}, \mathcal W_{\xi}, a_{i+1}, \ldots, a_{N-1}, a_N)$.
  This sequence satisfies (i). This sequence has length $N+2$, so is not a CE by the maximality of $N$. By these comments $u \in U_d$.
  Similarly, each of
   the other five elements  in \eqref{eq:longer}, \eqref{eq:longer2} is contained in $U_d$.
 Consequently $x \lbrack a_{i-1}, a_i\rbrack y\in U_d$. This proves the claim under the assumption $k\geq 1$.
 Next assume that $k=0$.
 By the form of the relations (iii), (iv) in 
Definition \ref{thm:m1}, the commutator $\lbrack a_{i-1}, a_i \rbrack$ is a linear combination of
\begin{align*}
&\mathcal W_{1-\xi} \mathcal W_{\xi} \mathcal W_{\xi}, \qquad
\mathcal W_{\xi} \mathcal W_{1-\xi} \mathcal W_{\xi}, \qquad 
\mathcal W_{\xi} \mathcal W_{\xi}  \mathcal W_{1-\xi}.
   \end{align*}
   Therefore $x\lbrack a_{i-1}, a_i\rbrack y$ is a linear combination of
   \begin{align}
    \label{eq:longer3}
&x\mathcal W_{1-\xi} \mathcal W_{\xi} \mathcal W_{\xi}y, \qquad
x\mathcal W_{\xi} \mathcal W_{1-\xi} \mathcal W_{\xi}y, \qquad 
x\mathcal W_{\xi} \mathcal W_{\xi}  \mathcal W_{1-\xi}y.
   \end{align}
 \noindent Let $v$ denote the element on the left in \eqref{eq:longer3}. Note that $v$ is the product of the terms in the sequence
 $(a_1, a_2, \ldots, a_{i-2}, \mathcal W_{1-\xi}, \mathcal W_{\xi}, \mathcal W_{\xi}, a_{i+1}, \ldots, a_{N-1}, a_N)$.
  This sequence satisfies (i). This sequence has length $N+1$, so is not a CE by the maximality of $N$. By these comments $v \in U_d$.
  Similarly, each of
   the other two elements  in \eqref{eq:longer3} is contained in $U_d$.
 Consequently $x \lbrack a_{i-1}, a_i\rbrack y\in U_d$. This proves the claim under the assumption $k=0$.
 We have now proved the claim. Consider the sequence    $(a_1, a_2, \ldots, a_{i-2}, a_i, a_{i-1}, a_{i+1}, \ldots, a_{N-1}, a_N)$. This sequence satisfies (i).
 For this sequence, the product of the terms is equal to $x a_i a_{i-1} y$. We have  $x a_i a_{i-1} y \not\in U_d$, because
 $x a_{i-1}a_i y \not\in U_d$ by construction
and $x\lbrack a_{i-1},a_i \rbrack y \in U_d$ by the claim.
 By these comments, the sequence 
   $(a_1, a_2, \ldots, a_{i-2}, a_i, a_{i-1}, a_{i+1}, \ldots, a_{N-1}, a_N)$ is a CE. This CE has length $N$ and $M-1$ inversions, and this contradicts 
 the minimality of $M$. The result follows.
\end{proof}

\section{How the filtrations of $\mathcal O_q$ and $\mathcal O^\vee_q$ are related}

\noindent Recall the algebra homomorphism $\natural: \mathcal O^\vee_q \to \mathcal O_q$ from Lemma \ref{lem:step1}.
In the previous three sections, we discussed the filtration $\lbrace \mathcal O_d \rbrace_{d \in \mathbb N}$ of $\mathcal O_q$ and  the
filtration $\lbrace \mathcal O^\vee_d \rbrace_{d \in \mathbb N}$ of $\mathcal O^\vee_q$. In this section, we consider how these filtrations are related. We use
this relationship to show that $\natural$ is bijective. We use the bijectivity of $\natural$ to  prove Theorem \ref{thm:m1Int} and Theorem \ref{thm:m2}.

\begin{lemma} \label{thm:2} 
We have $\natural ( \mathcal O^\vee_d)=\mathcal O_d$ for $d \in \mathbb N$.
\end{lemma}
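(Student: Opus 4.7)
The plan is to prove both inclusions separately, using that $\natural$ carries each essential generator to an alternating generator of the \emph{same} degree (compare Definition \ref{def:degreeO} and Definition \ref{def:degreeV}: $\mathcal W_0,\mathcal W_1$ have degree $1$ in both settings, and $\mathcal{\tilde G}_{k+1}$ has degree $2k+2$ in both settings).

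The inclusion $\natural(\mathcal O^\vee_d)\subseteq \mathcal O_d$ is immediate. Indeed, by Definition \ref{def:AnV} the subspace $\mathcal O^\vee_d$ is spanned by products $a_1a_2\cdots a_n$ of essential generators with $\sum_i \deg(a_i)\leq d$. Applying $\natural$ yields the product $\natural(a_1)\cdots \natural(a_n)$ of alternating generators of the same total degree, which lies in $\mathcal O_d$ by Definition \ref{def:AnO}.

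For the reverse inclusion $\mathcal O_d\subseteq \natural(\mathcal O^\vee_d)$ I would proceed by induction on $d$. The base cases $d=0,1$ follow from Lemma \ref{lem:factor}(i): $\mathcal O_0=\mathbb F 1$ and $\mathcal O_1=\mathbb F 1+\mathbb F\mathcal W_0+\mathbb F\mathcal W_1$, each of which is visibly in the image of $\mathcal O^\vee_0$ and $\mathcal O^\vee_1$ respectively. For the inductive step with $d\geq 2$, invoke the decomposition of Lemma \ref{lem:factor}(ii):
\begin{align*}
\mathcal O_d = E_d + \mathcal O_{d-1} + \sum_{k=1}^{d-1} \mathrm{Span}(\mathcal O_k\mathcal O_{d-k}).
\end{align*}
By the induction hypothesis $\mathcal O_{d-1}\subseteq \natural(\mathcal O^\vee_{d-1})\subseteq \natural(\mathcal O^\vee_d)$, and $\mathcal O_k\mathcal O_{d-k}\subseteq \natural(\mathcal O^\vee_k)\natural(\mathcal O^\vee_{d-k})= \natural(\mathcal O^\vee_k\mathcal O^\vee_{d-k})\subseteq \natural(\mathcal O^\vee_d)$ since $\natural$ is an algebra homomorphism and $\mathcal O^\vee_k\mathcal O^\vee_{d-k}\subseteq \mathcal O^\vee_d$. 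Finally, for $E_d$: if $d$ is odd then $E_d=0$ is trivially handled, while if $d=2n$ is even then $E_d=\mathbb F\mathcal{\tilde G}_n=\mathbb F\,\natural(\mathcal{\tilde G}_n)\subseteq \natural(\mathcal O^\vee_d)$, because $\mathcal{\tilde G}_n\in \mathcal O^\vee_d$ has degree $2n=d$ in the $\mathcal O^\vee_q$ setting.

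There is no real obstacle here: the argument is essentially a bookkeeping verification that the degree assignments on the two sides match and that the recursive description in Lemma \ref{lem:factor}(ii) is ``visibly'' in the image of $\natural$. The only subtle point one needs Lemma \ref{lem:factor}(ii) for is the odd-degree alternating generators $\mathcal W_{-k},\mathcal W_{k+1}$ with $k\geq 1$ and the even-degree generators $\mathcal G_{k+1}$, which do \emph{not} arise as images of essential generators directly; these are handled by the observation that such an $a_1$ of degree $d$ already lives in $\mathcal O_{d-1}+\mathrm{Span}(\mathcal O_1\mathcal O_{d-1})+\mathrm{Span}(\mathcal O_{d-1}\mathcal O_1)$ via the expressions \eqref{eq:WmkA}--\eqref{eq:Wkp1B} and \eqref{eq:getrid}, which was already built into the proof of Lemma \ref{lem:factor}(ii).
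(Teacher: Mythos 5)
Your proof is correct and takes essentially the same approach as the paper: an induction on $d$ driven by the recursive decomposition of Lemma \ref{lem:factor}(ii). The only cosmetic difference is that the paper establishes equality in a single chain by applying $\natural$ to the parallel decomposition of Lemma \ref{lem:factorV}(ii) and matching terms, whereas you split into two inclusions and dispose of the easy one ($\natural(\mathcal O^\vee_d)\subseteq\mathcal O_d$) directly from the fact that $\natural$ preserves the degree of each generator, thereby avoiding Lemma \ref{lem:factorV}(ii) altogether.
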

\begin{proof}  Our proof is by induction on $d$.
The result holds for $d=0$, since $\mathcal O^\vee_0 = \mathbb F 1$ and $\mathcal O_0 = \mathbb F 1$.
The result holds for $d=1$ by Lemmas \ref{lem:factor}(i), \ref{lem:factorV}(i).
For $d\geq 2$ we use induction along with \eqref{eq:odd}, \eqref{eq:oddV}  to obtain
\begin{align*} 
\natural(\mathcal O^\vee_d) &= 
\natural\biggl(E^\vee_d+ \mathcal O^\vee_{d-1} + \sum_{k=1}^{d-1} {\rm Span}(\mathcal O^\vee_k \mathcal O^\vee_{d-k})\biggr) \\
&=\natural(E^\vee_d)+ \natural(\mathcal O^\vee_{d-1}) + \sum_{k=1}^{d-1} {\rm Span}\bigl(\natural (\mathcal O^\vee_k)\natural( \mathcal O^\vee_{d-k})\bigr) \\
 &=E_d+ \mathcal O_{d-1} + \sum_{k=1}^{d-1} {\rm Span}(\mathcal O_k \mathcal O_{d-k}) 
 \\
&=\mathcal O_d.
\end{align*}
\end{proof}

\noindent We hope to show that $\natural$  is bijective. This result will follow from Lemma \ref{thm:2},  if we can show that ${\rm dim}(\mathcal O^\vee_d)={\rm dim}(\mathcal O_d)$ for $d \in \mathbb N$.
By Lemma \ref{thm:2}, ${\rm dim}(\mathcal O^\vee_d)\geq {\rm dim}(\mathcal O_d)$ for $d \in \mathbb N$.
To show the reverse inequality, we will display an $\mathbb F$-linear map $\mathcal O_q \to \mathcal O^\vee_q$ that sends $\mathcal O_d $ onto $\mathcal O^\vee_d$ for $d \in \mathbb N$.
This map is displayed in Proposition \ref{prop:back} below.


  
  \begin{lemma} \label{lem:vphi}
  There exists an algebra homomorphism $\flat: O_q \to \mathcal O^\vee_q$ that sends $W_0 \mapsto \mathcal W_0$ and $W_1 \mapsto \mathcal W_1$.
  \end{lemma}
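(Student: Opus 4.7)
The proof is essentially immediate from the way $\mathcal O^\vee_q$ was set up. My plan is simply to invoke the universal property of $O_q$: since $O_q$ is presented by the generators $W_0, W_1$ subject only to the $q$-Dolan/Grady relations \eqref{eq:qOns1}, \eqref{eq:qOns2}, there exists an algebra homomorphism $O_q \to \mathcal O^\vee_q$ sending $W_0 \mapsto \mathcal W_0$, $W_1 \mapsto \mathcal W_1$ if and only if the target elements $\mathcal W_0, \mathcal W_1 \in \mathcal O^\vee_q$ satisfy those same relations.

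First I would verify this hypothesis. But this verification is trivial: the relations (i) and (ii) of Definition \ref{thm:m1} are literally the $q$-Dolan/Grady relations for $\mathcal W_0, \mathcal W_1$, written with $(\mathcal W_0, \mathcal W_1)$ in place of $(W_0, W_1)$ and holding by construction in $\mathcal O^\vee_q$.

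Then I would conclude by appealing to Definition \ref{def:U}: there is a unique algebra homomorphism $\flat: O_q \to \mathcal O^\vee_q$ determined by $W_0 \mapsto \mathcal W_0$ and $W_1 \mapsto \mathcal W_1$. There is no obstacle here; the lemma was designed to be automatic from the form of Definition \ref{thm:m1}. The genuine work lies in the sequel, where one must show (using Proposition \ref{def:AnV2} together with Proposition \ref{def:cOn}) that $\flat$ extends, via the map $1 \otimes z_n \mapsto \mathcal{\tilde G}_n$, to a surjection $O_q \otimes \mathbb F[z_1, z_2, \ldots] \to \mathcal O^\vee_q$ satisfying ${\rm dim}(\mathcal O^\vee_d) \leq {\rm dim}(\mathcal O_d)$, which together with Lemma \ref{thm:2} will yield the bijectivity of $\natural$.
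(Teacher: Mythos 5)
Your proof is correct and is essentially identical to the paper's: both simply observe that $\mathcal W_0$, $\mathcal W_1$ satisfy relations (i), (ii) of Definition \ref{thm:m1}, which are exactly the defining $q$-Dolan/Grady relations of $O_q$, so the universal property of the presentation gives the homomorphism $\flat$.
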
 
  \begin{proof} In the algebra $\mathcal O^\vee_q$ the elements  $\mathcal W_0$, $\mathcal W_1$ satisfy the $q$-Dolan/Grady relations, and these are the defining
  relations for $O_q$. 
   \end{proof} 
  

\begin{lemma}\label{lem:ztoO}
There exists an algebra homomorphism $\sharp : \mathbb F \lbrack z_1, z_2, \ldots \rbrack \to \mathcal O^\vee_q$ that sends
$z_n \mapsto \mathcal {\tilde G}_n$ for $n\geq 1$.
\end{lemma}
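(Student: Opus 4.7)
The plan is to invoke the universal property of the polynomial algebra. Since $\mathbb F\lbrack z_1, z_2, \ldots \rbrack$ is the free commutative $\mathbb F$-algebra on the generating set $\lbrace z_n \rbrace_{n=1}^\infty$, to produce an algebra homomorphism from it into any target algebra it suffices to specify the images of the generators and verify that these images pairwise commute in the target.

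I would therefore proceed as follows. First, I would recall from Definition \ref{thm:m1}(vii) that in $\mathcal O^\vee_q$ the relations $\lbrack \mathcal {\tilde G}_{k+1}, \mathcal {\tilde G}_{\ell+1} \rbrack = 0$ hold for all $k,\ell \in \mathbb N$. Shifting indices, this says that $\lbrack \mathcal {\tilde G}_m, \mathcal {\tilde G}_n \rbrack = 0$ for all $m,n \geq 1$, i.e.\ the elements $\lbrace \mathcal {\tilde G}_n \rbrace_{n=1}^\infty$ pairwise commute in $\mathcal O^\vee_q$. Next, I would consider the assignment $z_n \mapsto \mathcal {\tilde G}_n$ for $n \geq 1$. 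By the universal property of the polynomial ring in countably many mutually commuting indeterminates (Definition \ref{def:poly}), this assignment extends uniquely to an $\mathbb F$-algebra homomorphism $\sharp : \mathbb F\lbrack z_1, z_2, \ldots \rbrack \to \mathcal O^\vee_q$ with the required action on generators.

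There is essentially no obstacle in this lemma; the entire content is the observation that relation (vii) of Definition \ref{thm:m1} guarantees pairwise commutativity of the images. The real work using this homomorphism will come later, when $\sharp$ is combined with the homomorphism $\flat$ of Lemma \ref{lem:vphi} to build an $\mathbb F$-linear map $\mathcal O_q \to \mathcal O^\vee_q$ (via $\phi^{-1}$ and the tensor decomposition of Proposition \ref{def:cOn}) that sends $\mathcal O_d$ onto $\mathcal O^\vee_d$, thereby establishing the dimension inequality needed to conclude that $\natural$ is bijective.
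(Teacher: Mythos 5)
Your proof is correct and matches the paper's approach exactly: the paper's entire proof is the citation ``By Definition \ref{thm:m1}(vii),'' which is precisely the observation that the elements $\mathcal {\tilde G}_n$ pairwise commute, so the universal property of the polynomial algebra applies. Your write-up simply makes explicit what the paper leaves implicit.
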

\begin{proof} By Definition \ref{thm:m1}(vii).
\end{proof}

\begin{lemma} \label{lem:fs}
There exists an $\mathbb F$-linear map $\varphi: O_q \otimes \mathbb F \lbrack z_1, z_2, \ldots \rbrack \to \mathcal O^\vee_q$ that sends $w \otimes z \mapsto \flat(w) \sharp(z)$ for all $w \in O_q$ and $z \in 
\mathbb F \lbrack z_1, z_2, \ldots \rbrack$.
\end{lemma}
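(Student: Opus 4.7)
The plan is to invoke the universal property of the tensor product. Define a map
\[
\mu: O_q \times \mathbb F \lbrack z_1, z_2, \ldots \rbrack \to \mathcal O^\vee_q, \qquad (w,z) \mapsto \flat(w)\,\sharp(z),
\]
using the algebra homomorphisms $\flat$ from Lemma \ref{lem:vphi} and $\sharp$ from Lemma \ref{lem:ztoO}.

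First I would verify that $\mu$ is $\mathbb F$-bilinear. Linearity in the first slot: since $\flat$ is an algebra homomorphism it is in particular $\mathbb F$-linear, and right multiplication by the fixed element $\sharp(z)$ in the algebra $\mathcal O^\vee_q$ is $\mathbb F$-linear. Hence $w \mapsto \flat(w)\sharp(z)$ is $\mathbb F$-linear. Linearity in the second slot is symmetric: $\sharp$ is $\mathbb F$-linear and left multiplication by $\flat(w)$ is $\mathbb F$-linear, so $z \mapsto \flat(w)\sharp(z)$ is $\mathbb F$-linear.

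Having established that $\mu$ is $\mathbb F$-bilinear, the universal property of the tensor product over $\mathbb F$ guarantees a unique $\mathbb F$-linear map
\[
\varphi: O_q \otimes \mathbb F \lbrack z_1, z_2, \ldots \rbrack \to \mathcal O^\vee_q
\]
such that $\varphi(w \otimes z) = \mu(w,z) = \flat(w)\,\sharp(z)$ on pure tensors, which is exactly the claim. There is no real obstacle here; the lemma is essentially a formal consequence of the universal property combined with the existence of $\flat$ and $\sharp$, and no further computation in $\mathcal O^\vee_q$ is required at this stage. (The substantive content, namely showing that $\varphi$ is surjective and that its image lands $\mathcal O^\vee_d$-by-$\mathcal O^\vee_d$ in the right filtration layer, will presumably be handled in the subsequent lemmas.)
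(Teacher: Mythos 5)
Your proof is correct and takes essentially the same approach as the paper: the paper realizes $\varphi$ as the composition of the linear map $\flat\otimes\sharp$ with the multiplication map $\mathcal O^\vee_q\otimes\mathcal O^\vee_q\to\mathcal O^\vee_q$, which is just a repackaging of your direct appeal to the universal property applied to the bilinear map $(w,z)\mapsto\flat(w)\sharp(z)$.
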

\begin{proof} The map  $\flat \otimes \sharp: O_q \otimes \mathbb F \lbrack z_1, z_2, \ldots \rbrack \to\mathcal O^\vee_q \otimes \mathcal O^\vee_q$ sends $w \otimes z \mapsto \flat(w) \otimes \sharp(z)$ for
all $w \in O_q$ and $z \in \mathbb F \lbrack z_1, z_2, \ldots \rbrack$. The map $\flat \otimes \sharp$ is 
an algebra homomorphism, and hence $\mathbb F$-linear.
The multiplication map 
${\rm mult}: \mathcal O^\vee_q \otimes \mathcal O^\vee_q \to \mathcal O^\vee_q$ sends $u \otimes v \mapsto uv$ for all $u,v \in \mathcal O^\vee_q$. This map is $\mathbb F$-linear.
The composition
\begin{equation*}
{\begin{CD}
\varphi: \quad O_q \otimes \mathbb F \lbrack z_1, z_2, \ldots \rbrack@>>\flat \otimes \sharp > \mathcal O^\vee_q \otimes \mathcal O^\vee_q @>>{\rm mult} > \mathcal O^\vee_q
         \end{CD}}
\end{equation*}
meets the requirements of the lemma statement.
\end{proof}

\noindent Recall the algebra isomorphism $\phi: O_q \otimes \mathbb F \lbrack z_1, z_2, \ldots \rbrack \to \mathcal O_q$ from around \eqref{eq:phiAction}. In Proposition
\ref{def:cOn} 
we described $\phi^{-1} (\mathcal O_d)$ for $d \in \mathbb N$.
\begin{proposition}
\label{prop:back}
We have $\varphi\bigl( \phi^{-1} (\mathcal O_d )\bigr) = \mathcal O^\vee_d$ for $d \in \mathbb N$.
\end{proposition}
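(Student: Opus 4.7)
The plan is to prove both inclusions by matching the descriptions of $\phi^{-1}(\mathcal O_d)$ from Proposition \ref{def:cOn} and of $\mathcal O^\vee_d$ from Proposition \ref{def:AnV2}.

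For the inclusion $\varphi(\phi^{-1}(\mathcal O_d)) \subseteq \mathcal O^\vee_d$, I would expand, using Proposition \ref{def:cOn} and Lemma \ref{lem:fs},
\[
\varphi(\phi^{-1}(\mathcal O_d)) = \sum_{k=0}^{\lfloor d/2 \rfloor} \flat(O_{d-2k})\,\sharp(Z_k),
\]
and then verify two sub-claims: (a) $\flat(O_r) \subseteq \mathcal O^\vee_r$ for all $r \in \mathbb N$, and (b) $\sharp(Z_k) \subseteq \mathcal O^\vee_{2k}$ for all $k \in \mathbb N$. For (a), $O_r$ is spanned by products in $W_0, W_1$ of length at most $r$, and since $\flat(W_i) = \mathcal W_i$ has degree $1$ by Definition \ref{def:degreeV}, the multiplicative property of the filtration $\{\mathcal O^\vee_n\}$ places these images in $\mathcal O^\vee_r$. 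For (b), $Z_k$ has basis $\{z_\lambda : \lambda \in \Lambda_k\}$, and since $\sharp(z_n) = \mathcal{\tilde G}_n$ has degree $2n$, the image $\sharp(z_\lambda) = \prod_i \mathcal{\tilde G}_{\lambda_i}$ has degree $2k$. Combining, $\flat(O_{d-2k})\,\sharp(Z_k) \subseteq \mathcal O^\vee_{d-2k}\,\mathcal O^\vee_{2k} \subseteq \mathcal O^\vee_d$.

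For the reverse inclusion $\mathcal O^\vee_d \subseteq \varphi(\phi^{-1}(\mathcal O_d))$, I would invoke Proposition \ref{def:AnV2}: $\mathcal O^\vee_d$ is spanned by products $a_1 a_2 \cdots a_n$ of essential generators subject to conditions (i), (ii) there. Condition (ii) forces the degree-$1$ generators (the $\mathcal W_i$'s) to occupy an initial segment, so each such spanning product has the form $wg$ where $w = a_1 \cdots a_r$ is a word of length $r$ in $\mathcal W_0, \mathcal W_1$ and $g = a_{r+1} \cdots a_n$ is a product of $\mathcal{\tilde G}_{k+1}$'s of total degree $2m$, with $r + 2m \leq d$. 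Then $w = \flat(w')$ for some $w' \in O_r \subseteq O_{d-2m}$; and using the commutativity relation Definition \ref{thm:m1}(vii) to reorder the factors of $g$ by decreasing subscript, one obtains $g = \sharp(z_\lambda)$ for some $\lambda \in \Lambda_m$, so $z_\lambda \in Z_m$. Therefore $wg = \varphi(w' \otimes z_\lambda) \in \varphi(O_{d-2m} \otimes Z_m) \subseteq \varphi(\phi^{-1}(\mathcal O_d))$ by Proposition \ref{def:cOn}.

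The main work is the reverse inclusion, where the essential input is Proposition \ref{def:AnV2}: it puts each spanning monomial of $\mathcal O^\vee_d$ into the normal form ``word in $\mathcal W_0, \mathcal W_1$ followed by a commuting product of $\mathcal{\tilde G}$'s''. Once this normal form is available, the commutativity of the $\mathcal{\tilde G}$'s lets one identify the second factor with some $\sharp(z_\lambda)$, and the degree bookkeeping matches the summands $O_{d-2k} \otimes Z_k$ in Proposition \ref{def:cOn} term-by-term.
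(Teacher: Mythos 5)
Your proof is correct and follows essentially the same route as the paper, whose proof is the one-line instruction to evaluate $\varphi\bigl(\phi^{-1}(\mathcal O_d)\bigr)$ via Proposition \ref{def:cOn} and compare with Proposition \ref{def:AnV2}; you have simply supplied the details of that comparison. Both inclusions are handled exactly as intended: the forward one by degree bookkeeping on $\flat(O_{d-2k})\sharp(Z_k)$, and the reverse one by the normal form ``word in $\mathcal W_0,\mathcal W_1$ followed by a product of $\mathcal{\tilde G}$'s'' that Proposition \ref{def:AnV2} provides.
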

\begin{proof} Evaluate $\varphi\bigl(\phi^{-1} (\mathcal O_d )\bigr)$ using 
Proposition \ref{def:cOn},  and compare the result with $\mathcal O^\vee_d$ using
Proposition \ref{def:AnV2}.
\end{proof}

\begin{proposition} \label{lem:dd} We have ${\rm dim} (\mathcal O^\vee_d)={\rm dim}(\mathcal O_d)$ for $d \in \mathbb N$.
\end{proposition}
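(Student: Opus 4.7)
The plan is to sandwich the dimension of $\mathcal O^\vee_d$ between two copies of $\dim(\mathcal O_d)$, using the two maps already set up: the algebra homomorphism $\natural : \mathcal O^\vee_q \to \mathcal O_q$ from Lemma \ref{lem:step1} and the $\mathbb F$-linear map $\varphi : O_q \otimes \mathbb F[z_1, z_2, \ldots] \to \mathcal O^\vee_q$ from Lemma \ref{lem:fs}.

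First, for the lower bound on $\dim(\mathcal O^\vee_d)$, I would invoke Lemma \ref{thm:2}, which asserts $\natural(\mathcal O^\vee_d) = \mathcal O_d$. Since $\natural$ restricts to a surjective $\mathbb F$-linear map $\mathcal O^\vee_d \twoheadrightarrow \mathcal O_d$, it follows immediately that $\dim(\mathcal O^\vee_d) \geq \dim(\mathcal O_d)$.

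Next, for the upper bound, I would invoke Proposition \ref{prop:back}, which gives $\varphi\bigl(\phi^{-1}(\mathcal O_d)\bigr) = \mathcal O^\vee_d$. Thus $\varphi$ restricts to a surjective $\mathbb F$-linear map $\phi^{-1}(\mathcal O_d) \twoheadrightarrow \mathcal O^\vee_d$, so $\dim(\mathcal O^\vee_d) \leq \dim\bigl(\phi^{-1}(\mathcal O_d)\bigr)$. Because $\phi : O_q \otimes \mathbb F[z_1, z_2, \ldots] \to \mathcal O_q$ is an algebra isomorphism (as quoted from \cite[Theorem~9.14]{pbwqO}), the subspace $\phi^{-1}(\mathcal O_d)$ has the same dimension as $\mathcal O_d$. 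Hence $\dim(\mathcal O^\vee_d) \leq \dim(\mathcal O_d)$.

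Combining the two inequalities yields $\dim(\mathcal O^\vee_d) = \dim(\mathcal O_d)$, as required. There is no real obstacle here; all the substantive work has been done in Lemma \ref{thm:2} and Proposition \ref{prop:back}. The only thing one might worry about is whether the two surjections assemble consistently, but since both conclusions are statements about dimensions of subspaces (not about the composite $\varphi \circ \phi^{-1}$ being inverse to $\natural$), no compatibility check is needed at this stage; the identification of $\natural$ as an isomorphism will then be deduced in the next theorem from the equality of dimensions together with the already-established surjectivity.
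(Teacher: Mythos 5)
Your proposal is correct and takes exactly the same route as the paper: the lower bound $\dim(\mathcal O^\vee_d)\geq\dim(\mathcal O_d)$ from the surjection in Lemma \ref{thm:2}, and the upper bound from Proposition \ref{prop:back} together with the fact that $\phi$ is an isomorphism. You have merely spelled out the dimension-counting details that the paper leaves implicit.
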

\begin{proof} We have  ${\rm dim} (\mathcal O^\vee_d)\geq {\rm dim}(\mathcal O_d)$ by Lemma  \ref{thm:2}, and
${\rm dim} (\mathcal O^\vee_d)\leq {\rm dim}(\mathcal O_d)$ by Proposition \ref{prop:back}. 
\end{proof}

\begin{theorem} \label{thm:nat}
The map $\natural$ is an algebra isomorphism.
\end{theorem}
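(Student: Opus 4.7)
The plan is to assemble results that have already been set up, so this final step is essentially bookkeeping rather than a new computation. We know from Lemma \ref{lem:step1} that $\natural$ is an algebra homomorphism and is surjective, so all that remains is to prove injectivity.

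First, I would observe that each subspace $\mathcal O_d$ is finite-dimensional. Indeed, by Definition \ref{def:degreeO} every alternating generator has degree at least $1$, and only finitely many alternating generators have degree at most $d$; hence the spanning set for $\mathcal O_d$ from Definition \ref{def:AnO} consists of products of length at most $d$ drawn from a finite pool, giving finitely many spanning elements. By Proposition \ref{lem:dd} the same is therefore true of $\mathcal O^\vee_d$.

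Next, I would combine Lemma \ref{thm:2} and Proposition \ref{lem:dd}: the restriction
\begin{equation*}
\natural|_{\mathcal O^\vee_d} : \mathcal O^\vee_d \to \mathcal O_d
\end{equation*}
is a surjective linear map between finite-dimensional vector spaces of equal dimension, hence a linear isomorphism, and in particular injective.

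Finally, since $\mathcal O^\vee_q = \bigcup_{d \in \mathbb N} \mathcal O^\vee_d$, any element of $\ker \natural$ lies in some $\mathcal O^\vee_d$ and is therefore zero by the injectivity of $\natural|_{\mathcal O^\vee_d}$. Thus $\natural$ is injective, and together with the surjectivity from Lemma \ref{lem:step1} we conclude that $\natural$ is an algebra isomorphism. The substantive obstacle, namely producing the inverse-direction dimension bound ${\rm dim}(\mathcal O^\vee_d) \le {\rm dim}(\mathcal O_d)$ via the map $\varphi$ in Proposition \ref{prop:back}, has already been overcome, so no further difficulty arises here.
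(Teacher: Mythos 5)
Your proof is correct and follows essentially the same route as the paper: surjectivity from Lemma \ref{lem:step1}, then injectivity by using Lemma \ref{thm:2} and Proposition \ref{lem:dd} to see that $\natural$ restricts to a bijection $\mathcal O^\vee_d \to \mathcal O_d$ for each $d$, and finally invoking $\mathcal O^\vee_q = \cup_{d \in \mathbb N} \mathcal O^\vee_d$ to conclude that the kernel vanishes. Your explicit verification that each $\mathcal O_d$ is finite-dimensional (so that a surjection between spaces of equal dimension is forced to be injective) is a sound point of care that the paper leaves implicit, but it does not change the argument.
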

\begin{proof} By Lemma \ref{lem:step1} the map $\natural$ is a surjective algebra homomorphism. To show that $\natural$ is an isomorphism, it suffices to show 
that $\natural$ is injective. Let $K$ denote the kernel of $\natural$.
For $d \in \mathbb N$ we have $\natural(\mathcal O^\vee_d)= \mathcal O_d$ by Lemma  \ref{thm:2}, so by Proposition
\ref{lem:dd}  the restriction of $\natural $ to $\mathcal O^\vee_d$ gives a bijection $\mathcal O^\vee_d \to \mathcal O_d$, so
$\mathcal O^\vee_d \cap K=0$. By this and item (iii) below Definition \ref{def:AnV}, we obtain $K=0$. Therefore $\natural$ is injective.
We have shown that $\natural$ is an algebra isomorphism.
\end{proof}

\noindent {\it Proof of Theorem \ref{thm:m1Int}}. This is a reformulation of Theorem \ref{thm:nat}. \hfill $\Box$ \\

\begin{definition}\rm By the {\it compact  presentation} of $\mathcal O_q$ we mean the presentation given in Theorem \ref{thm:m1Int}.
\end{definition}

\noindent Before proving Theorem \ref{thm:m2}, we take a closer look at the map $\varphi$ from Lemma  \ref{lem:fs}.

\begin{proposition} \label{prop:bij} The map $\varphi$ is an isomorphism of vector spaces.
\end{proposition}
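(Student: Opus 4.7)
The plan is to use Proposition \ref{prop:back} together with a dimension count to prove bijectivity on each filtration level, and then pass to the union. By Proposition \ref{prop:back}, $\varphi$ carries the subspace $\phi^{-1}(\mathcal O_d) \subseteq O_q \otimes \mathbb F\lbrack z_1,z_2,\ldots\rbrack$ onto $\mathcal O^\vee_d$. Since $\phi$ is an algebra isomorphism we have $\dim \phi^{-1}(\mathcal O_d) = \dim \mathcal O_d$, and by Proposition \ref{lem:dd} this equals $\dim \mathcal O^\vee_d$. Provided all three dimensions are finite, the surjective linear map $\varphi|_{\phi^{-1}(\mathcal O_d)} : \phi^{-1}(\mathcal O_d) \to \mathcal O^\vee_d$ is forced to be bijective.

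To verify finite-dimensionality I would invoke Proposition \ref{def:cOn}:
\[
\phi^{-1}(\mathcal O_d) \;=\; \sum_{k=0}^{\lfloor d/2\rfloor} O_{d-2k} \otimes Z_k.
\]
Each $O_{d-2k}$ is spanned by the words of length at most $d-2k$ in the two generators $W_0,W_1$ of $O_q$, so is finite-dimensional. Each $Z_k$ has basis $\lbrace z_\lambda\rbrace_{\lambda\in\Lambda_k}$, indexed by the finite set of partitions of $k$, and so is finite-dimensional. Hence $\phi^{-1}(\mathcal O_d)$, $\mathcal O_d$, and $\mathcal O^\vee_d$ are all finite-dimensional, and the three dimensions agree.

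Finally I would globalize by exhausting the algebras. The filtration $\lbrace \mathcal O_d\rbrace_{d \in \mathbb N}$ exhausts $\mathcal O_q$, so $\lbrace \phi^{-1}(\mathcal O_d)\rbrace_{d\in\mathbb N}$ exhausts $O_q \otimes \mathbb F\lbrack z_1,z_2,\ldots\rbrack$, and $\lbrace \mathcal O^\vee_d\rbrace_{d\in\mathbb N}$ exhausts $\mathcal O^\vee_q$. Surjectivity of $\varphi$ is immediate since every element of $\mathcal O^\vee_q$ lies in some $\mathcal O^\vee_d$, which is in the image of $\phi^{-1}(\mathcal O_d)$. For injectivity, if $\varphi(x)=0$ then $x\in \phi^{-1}(\mathcal O_d)$ for some $d$, and the bijective restriction forces $x=0$. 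There is no real obstacle here beyond the finite-dimensionality of the filtration pieces, which follows cleanly from Proposition \ref{def:cOn}; the heavy work has already been done in Propositions \ref{prop:back} and \ref{lem:dd}.
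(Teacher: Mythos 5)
Your proof is correct and follows essentially the same route as the paper: the paper also deduces bijectivity by combining Propositions \ref{prop:back} and \ref{lem:dd} with a dimension count on the filtration pieces (adapting the kernel argument from Theorem \ref{thm:nat}) and then passing to the union. Your explicit verification that each $\phi^{-1}(\mathcal O_d)$ is finite-dimensional via Proposition \ref{def:cOn} is a detail the paper leaves implicit, but it does not change the argument.
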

\begin{proof} Adapting the proof of Theorem \ref{thm:nat} and using Propositions \ref{prop:back}, \ref{lem:dd} we find that the composition $\varphi \circ \phi^{-1}$ is an isomorphism of vector spaces. The map $\phi$ is
an algebra isomorphism and hence an isomorphism of vector spaces. The result follows by linear algebra.
\end{proof}

\noindent {\it Proof of Theorem \ref{thm:m2}.} (i) By Lemma \ref{lem:iota}.
\\ \noindent (ii), (iii).
For notational convenience, we identify $\mathcal O^\vee_q$ with $\mathcal O_q$ via the bijection $\natural$. From this point of view, we revisit a few previous results.
Lemma \ref{lem:vphi} gives 
 an algebra homomorphism $\flat: O_q \to \mathcal O_q$ that coincides with the homomorphism $\iota$ from
 Lemma \ref{lem:iota}.
 Lemma \ref{lem:ztoO} gives  an algebra homomorphism $\sharp : \mathbb F \lbrack z_1, z_2, \ldots \rbrack \to \mathcal O_q$ that sends
$z_n \mapsto \mathcal {\tilde G}_n$ for $n\geq 1$.
Lemma \ref{lem:fs} gives an $\mathbb F$-linear map $\varphi: O_q \otimes \mathbb F \lbrack z_1, z_2, \ldots \rbrack \to \mathcal O_q$ that sends $w \otimes z \mapsto \flat(w) \sharp(z)$ for all $w \in O_q$ and $z \in 
\mathbb F \lbrack z_1, z_2, \ldots \rbrack$.
The map $\varphi$ is bijective by Proposition 
\ref{prop:bij}, and this implies (ii), (iii).
\\
\noindent  (iv)  Apply the antiautomorphism $\tau$ everywhere in (iii).
 \hfill $\Box$ \\

\noindent  We finish this section with a comment. Theorem \ref{thm:m1Int} shows that the relations 
\eqref{eq:3p1}--\eqref{eq:3p11}  are redundant in the following sense.
\begin{proposition}
\label{prop:mingen} \rm
The relations \eqref{eq:3p1}--\eqref{eq:3p11} 
are implied by the relations listed in (i)--(iii) below:
\begin{enumerate}
\item[\rm (i)]  \eqref{eq:3p1}--\eqref{eq:3p3};
\item[\rm (ii)]  \eqref{eq:3p4} with $\ell=0$;
\item[\rm (iii)] the equations on the right in \eqref{eq:3p10}.
\end{enumerate}
\end{proposition}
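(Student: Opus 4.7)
Define $\mathcal B$ to be the algebra given by the generators \eqref{eq:4gens} together with only the relations listed in (i)--(iii) of the proposition. The identity map on generators induces a surjective algebra homomorphism $\pi : \mathcal B \to \mathcal O_q$, and the proposition is equivalent to the assertion that $\pi$ is injective. My plan is to construct an inverse by routing through $\mathcal O^\vee_q$. Specifically, I will exhibit an algebra homomorphism $\phi : \mathcal O^\vee_q \to \mathcal B$ sending $\mathcal W_0, \mathcal W_1, \tilde{\mathcal G}_{k+1}$ to themselves, observe that $\pi \circ \phi$ coincides with the isomorphism $\natural$ of Theorem \ref{thm:nat}, and note that $\phi$ is surjective because the relations \eqref{eq:3p1}--\eqref{eq:3p3} (which lie in our reduced set) express every $\mathcal W_{-k}$, $\mathcal W_{k+1}$, $\mathcal G_{k+1}$ as a polynomial in the essential generators, exactly as in Section 5. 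Together these conditions force $\phi$ to be bijective, whence $\pi$ is an isomorphism.

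The heart of the argument is verifying that the seven defining relations of $\mathcal O^\vee_q$ from Definition \ref{thm:m1} hold in $\mathcal B$. Item (vii) is precisely the right half of \eqref{eq:3p10}, which is assumed. Items (iii) and (v) are \eqref{eq:r1} and \eqref{eq:r3}; inspection of the proofs of Lemmas \ref{lem:newrels1} and \ref{lem:newrels} shows that they invoke only \eqref{eq:3p1}--\eqref{eq:3p3} together with the $\ell=0$ case of \eqref{eq:3p4}, each of which is in our reduced set. Item (i), the first $q$-Dolan/Grady relation \eqref{eq:qOns1p}, is derived in Lemma \ref{eq:DG} from \eqref{eq:3p2} at $k=0$, \eqref{eq:3p4} at $(k,\ell)=(1,0)$, and the just-established \eqref{eq:r1}. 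To obtain items (ii), (iv), and (vi) I will use that they are $\tau$-images of (i), (iii), and (v) respectively; the key technical step is therefore to show that the antiautomorphism $\tau$ of Definition \ref{def:tauA} extends to $\mathcal B$. This amounts to checking that $\tau$ preserves each relation in our reduced set: $\tau$ swaps the two equations in \eqref{eq:3p1}, interchanges the pair \eqref{eq:3p2}/\eqref{eq:3p3}, swaps the two equations in \eqref{eq:3p4} at $\ell=0$, and fixes the right equation of \eqref{eq:3p10}.

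The main obstacle is bypassing the use of $\sigma$. In Lemma \ref{eq:DG}, the second $q$-Dolan/Grady relation \eqref{eq:qOns2p} was obtained from \eqref{eq:qOns1p} by applying $\sigma$, but $\sigma$ does not obviously descend to $\mathcal B$: it would carry the right half of \eqref{eq:3p10} onto the left half, which is precisely the relation we are trying to establish as redundant. In contrast, $\tau$ fixes $\tilde{\mathcal G}_{k+1}$ and $\mathcal G_{k+1}$ individually, so it does preserve our reduced set. The cost is that $\tau$-transporting \eqref{eq:qOns1p} does not immediately produce the displayed form of \eqref{eq:qOns2p}; a short calculation using the identity $[[[Y,X]_q,X]_{q^{-1}},X] = -[X,[X,[X,Y]_q]_{q^{-1}}]$ is needed to bring it into the required shape. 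Once $\tau$ is in place on $\mathcal B$, all the $\tau$-based arguments already present in the paper go through verbatim, completing the verification and hence the proof.
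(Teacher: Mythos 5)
Your proposal is correct and follows essentially the same route as the paper's proof: derive the compact relations of Theorem \ref{thm:m1Int} from the reduced set of relations, then invoke the presentation result to recover all of \eqref{eq:3p1}--\eqref{eq:3p11}. The paper's own proof is a two-sentence sketch that silently reuses the symmetry arguments from the proofs of Lemmas \ref{lem:newrels1}, \ref{eq:DG}, \ref{lem:newrels}; you correctly flag that $\sigma$ need not descend to the intermediate algebra $\mathcal B$ (its image of the right half of \eqref{eq:3p10} is the left half, which is not among the assumed relations), and your substitution of $\tau$ together with the identity $[[[Y,X]_q,X]_{q^{-1}},X]=-[X,[X,[X,Y]_q]_{q^{-1}}]$ repairs this cleanly. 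This is a genuine refinement that the paper leaves implicit; an equally valid alternative is to observe that the $\sigma$-image of every relation actually used in deriving \eqref{eq:qOns1p} already lies in the reduced set, so the mirror derivation of \eqref{eq:qOns2p} can be carried out directly in $\mathcal B$ without needing a globally defined $\sigma$.
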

\begin{proof} Examining the proof
Lemmas \ref{lem:newrels1}, \ref{eq:DG}, \ref{lem:newrels} we find that the
relations listed in (i)--(iii) above are the only ones used to obtain the relations (i)--(vii) in Theorem \ref{thm:m1Int}. The relations (i)--(vii) in Theorem \ref{thm:m1Int}
imply the relations \eqref{eq:3p1}--\eqref{eq:3p11}, by the meaning of Theorem \ref{thm:m1Int}. The result follows.
\end{proof}

\section{The elements $\lbrace \mathcal W_{-k}\rbrace_{k \in \mathbb N}$,  $\lbrace \mathcal W_{k+1}\rbrace_{k \in \mathbb N}$ revisited}

\noindent 
We return our attention to the elements $\lbrace \mathcal W_{-k}\rbrace_{k \in \mathbb N}$,  $\lbrace \mathcal W_{k+1}\rbrace_{k \in \mathbb N}$ in $\mathcal O_q$.
In  \eqref{eq:WmkA}--\eqref{eq:Wkp1B} we expressed these elements
in terms of the essential generators for $\mathcal O_q$. In this section, we give two more versions of
 \eqref{eq:WmkA}--\eqref{eq:Wkp1B} that are motivated by Theorem \ref{thm:m2}(iii),(iv) and also \cite[Proposition~8.4]{conj}. These versions are given in Propositions
 \ref{prop:WWalt}, \ref{prop:WWalta} below.
 Recall from Theorem \ref{thm:m2}(i) that the $q$-Onsager algebra $O_q$ is isomorphic to the subalgebra
 $\langle \mathcal W_0, \mathcal W_1 \rangle $  of $\mathcal O_q$.
 Following Baseilhac and Kolb \cite{BK} we define some elements in   $\langle \mathcal W_0, \mathcal W_1 \rangle $, denoted
\begin{align}
\lbrace B_{n \delta+ \alpha_0} \rbrace_{n=0}^\infty,
\qquad \quad 
\lbrace B_{n \delta+ \alpha_1} \rbrace_{n=0}^\infty,
\qquad \quad 
\lbrace B_{n \delta} \rbrace_{n=1}^\infty.
\label{eq:Upbw}
\end{align}
These elements are defined recursively as follows. Write $B_\delta  = q^{-2}\mathcal W_1 \mathcal W_0 - \mathcal W_0 \mathcal W_1$. We have
\begin{align}
&B_{\alpha_0}=\mathcal W_0,  \qquad \qquad 
B_{\delta+\alpha_0} = \mathcal W_1 + 
\frac{q \lbrack B_{\delta}, \mathcal W_0\rbrack}{(q-q^{-1})(q^2-q^{-2})},
\label{eq:line1}
\\
&
B_{n \delta+\alpha_0} = B_{(n-2)\delta+\alpha_0}
+ 
\frac{q \lbrack B_{\delta}, B_{(n-1)\delta+\alpha_0}\rbrack}{(q-q^{-1})(q^2-q^{-2})} \qquad \qquad n\geq 2
\label{eq:line2}
\end{align}
and 
\begin{align}
&B_{\alpha_1}=\mathcal W_1,  \qquad \qquad 
B_{\delta+\alpha_1} = \mathcal W_0 - 
\frac{q \lbrack B_{\delta}, \mathcal W_1\rbrack}{(q-q^{-1})(q^2-q^{-2})},
\label{eq:line3}
\\
&
B_{n \delta+\alpha_1} = B_{(n-2)\delta+\alpha_1}
- 
\frac{q \lbrack B_{\delta}, B_{(n-1)\delta+\alpha_1}\rbrack}{(q-q^{-1})(q^2-q^{-2})} \qquad \qquad n\geq 2.
\label{eq:line4}
\end{align}
Moreover for $n\geq 1$,
\begin{equation}
\label{eq:Bdelta}
B_{n \delta} = 
q^{-2}  B_{(n-1)\delta+\alpha_1} \mathcal W_0
- \mathcal W_0 B_{(n-1)\delta+\alpha_1}  + 
(q^{-2}-1)\sum_{\ell=0}^{n-2} B_{\ell \delta+\alpha_1}
B_{(n-\ell-2) \delta+\alpha_1}.
\end{equation}
By \cite[Proposition~5.12]{BK} the elements $\lbrace B_{n\delta}\rbrace_{n=1}^\infty$ mutually commute.
\medskip

\noindent We will not use the following fact, but we mention it for completeness.
\begin{lemma}
\label{prop:damiani} 
{\rm (See \cite[Theorem~4.5]{BK}.)}
Assume that $q$ is transcendental over $\mathbb F$. Then
 a PBW basis for 
  $\langle \mathcal W_0, \mathcal W_1 \rangle $
 is obtained by the elements {\rm \eqref{eq:Upbw}} in any linear
order.
\end{lemma}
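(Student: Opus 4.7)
The plan is to transport the Baseilhac--Kolb PBW theorem for $O_q$, namely \cite[Theorem~4.5]{BK}, to the subalgebra $\langle \mathcal W_0, \mathcal W_1 \rangle \subseteq \mathcal O_q$ via the algebra isomorphism $\iota\colon O_q \to \langle \mathcal W_0, \mathcal W_1 \rangle$ of Theorem \ref{thm:m2}(i), which sends $W_0 \mapsto \mathcal W_0$ and $W_1 \mapsto \mathcal W_1$.

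First, I would recall that inside $O_q$ itself (viewed, as noted earlier, as the Baseilhac--Kolb $q$-Onsager algebra specialized at $c=q^{-1}(q-q^{-1})^2$) one has root vectors $B^\circ_{n\delta+\alpha_0}$, $B^\circ_{n\delta+\alpha_1}$, $B^\circ_{n\delta}$ defined by exactly the recursions \eqref{eq:line1}--\eqref{eq:Bdelta}, but with $W_0, W_1$ playing the roles of $\mathcal W_0, \mathcal W_1$. By \cite[Theorem~4.5]{BK}, under the transcendence hypothesis on $q$ these root vectors constitute a PBW basis of $O_q$ under any linear order on the index set.

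Next, since the recursions \eqref{eq:line1}--\eqref{eq:Bdelta} involve only $q$-commutators, ordinary commutators, scalar multiples, and the two initial generators, and since $\iota$ is an algebra homomorphism with $\iota(W_0)=\mathcal W_0$ and $\iota(W_1)=\mathcal W_1$, a straightforward induction on the recursion depth yields $\iota(B^\circ_\mu) = B_\mu$ for every index $\mu$ appearing in \eqref{eq:Upbw}. Because $\iota$ is a linear bijection, it carries the PBW basis of $O_q$ to the corresponding family of ordered monomials in the elements \eqref{eq:Upbw}, which therefore form a PBW basis of $\langle \mathcal W_0, \mathcal W_1 \rangle$.

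The main (and essentially only) obstacle is bookkeeping: one must check that Baseilhac--Kolb's definitions of root vectors in their $c$-parametrized $q$-Onsager algebra, once specialized to $c=q^{-1}(q-q^{-1})^2$, match on the nose with the recursions \eqref{eq:line1}--\eqref{eq:Bdelta} reproduced above. Once that translation is confirmed, the statement is an immediate transport of structure along $\iota$, and the transcendence hypothesis on $q$ is used solely to invoke \cite[Theorem~4.5]{BK}; no further work is required inside $\mathcal O_q$ itself.
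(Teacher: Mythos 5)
Your proposal is correct and matches the paper's intent: the paper offers no proof of this lemma beyond the citation of \cite[Theorem~4.5]{BK}, and the implicit justification is exactly your transport-of-structure argument along the isomorphism $\iota\colon O_q \to \langle \mathcal W_0, \mathcal W_1\rangle$ from Theorem \ref{thm:m2}(i), since the recursions \eqref{eq:line1}--\eqref{eq:Bdelta} are defined purely in terms of the images of $W_0$, $W_1$. Your caveat about checking the specialization $c=q^{-1}(q-q^{-1})^2$ against Baseilhac--Kolb's conventions is the only bookkeeping point, and it is handled by the paper's earlier remark following Definition \ref{def:U}.
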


\noindent The elements $\lbrace B_{n\delta}\rbrace_{n=1}^\infty $
are defined using the formula 
\eqref{eq:Bdelta}. We mention another formula for
$\lbrace B_{n\delta}\rbrace_{n=1}^\infty$.
According to \cite[Section~5.2]{BK} the following
holds  for $n\geq 1$:
\begin{equation}
\label{eq:Bdel2}
B_{n \delta} = 
q^{-2} \mathcal W_1 B_{(n-1)\delta+\alpha_0} 
-  B_{(n-1)\delta+\alpha_0} \mathcal W_1 + 
(q^{-2}-1)\sum_{\ell=0}^{n-2} B_{\ell \delta+\alpha_0}
B_{(n-\ell-2) \delta+\alpha_0}.
\end{equation}

\noindent 
Recall the antiautomorphism $\tau$ of $\mathcal O_q$, from Definition \ref{def:tauA}.
\begin{lemma} \label{lem:asym2} The antiautomorphism $\tau$  sends
$B_{n\delta+\alpha_0}\leftrightarrow B_{n\delta+\alpha_1}$ for $n \in \mathbb N$, and fixes $B_{n\delta}$ for $n\geq 1$.
\end{lemma}
\begin{proof} The first assertion is verified by comparing \eqref{eq:line1}, \eqref{eq:line2} with \eqref{eq:line3}, \eqref{eq:line4}. The second assertion is verified by
comparing \eqref{eq:Bdelta}, \eqref{eq:Bdel2}.
\end{proof}

\noindent The rest of this section is  motivated by \cite[Section~8]{conj}.

\begin{lemma} \label{lem:G1}
The element $\mathcal {\tilde G}_1+ q B_{\delta}$ is central in $\mathcal O_q$.
%
\end{lemma}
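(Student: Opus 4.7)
The plan is to check centrality of $C := \mathcal{\tilde G}_1 + qB_\delta$ by verifying that $C$ commutes with each essential generator of $\mathcal O_q$, namely $\mathcal W_0$, $\mathcal W_1$, and $\mathcal{\tilde G}_{k+1}$ for $k\in \mathbb N$.

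First I would rewrite
\[
qB_\delta = q^{-1}\mathcal W_1\mathcal W_0 - q\mathcal W_0\mathcal W_1 = -[\mathcal W_0,\mathcal W_1]_q,
\]
so $C = \mathcal{\tilde G}_1 - [\mathcal W_0,\mathcal W_1]_q$. With this rewriting, relations (iii) and (iv) of Theorem \ref{thm:m1Int} translate \emph{directly} into $[\mathcal W_0, C] = 0$ and $[\mathcal W_1, C] = 0$. Since $C$ commutes with both $\mathcal W_0$ and $\mathcal W_1$, it commutes with every element of $\langle \mathcal W_0, \mathcal W_1 \rangle$, and in particular with $[\mathcal W_0,\mathcal W_1]_q$; hence $[C, \mathcal{\tilde G}_1] = [C, C + [\mathcal W_0,\mathcal W_1]_q] = 0$.

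To finish I would show $[C, \mathcal{\tilde G}_{k+1}] = 0$ for $k \geq 1$. Since $[\mathcal{\tilde G}_1, \mathcal{\tilde G}_{k+1}] = 0$ by relation (vii), this reduces to showing $[[\mathcal W_0, \mathcal W_1]_q, \mathcal{\tilde G}_{k+1}] = 0$. The $q$-Jacobi identity $[Z, [A,B]_q] = [[Z,A], B]_q + [A, [Z,B]]_q$, applied with $Z = \mathcal{\tilde G}_{k+1}$, $A = \mathcal W_0$, $B = \mathcal W_1$, together with the compact relations (v), (vi) of Theorem \ref{thm:m1Int}, reduces the problem to the identity
\[
[[\mathcal W_0, [\mathcal W_0, [\mathcal W_1, \mathcal{\tilde G}_k]_q]_q], \mathcal W_1]_q = [\mathcal W_0, [[[\mathcal{\tilde G}_k, \mathcal W_0]_q, \mathcal W_1]_q, \mathcal W_1]]_q,
\]
which no longer involves $\mathcal{\tilde G}_{k+1}$.

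The main obstacle is establishing this displayed identity, which must follow from the $q$-Dolan/Grady relations (i), (ii) on $\mathcal W_0, \mathcal W_1$ (it is linear in $\mathcal{\tilde G}_k$, so the $\mathcal{\tilde G}_k$-slot can be regarded as formal). A useful symmetry is that the antiautomorphism $\tau$ of Definition \ref{def:tauA}, which swaps $\mathcal W_0 \leftrightarrow \mathcal W_1$ and fixes each $\mathcal{\tilde G}_n$, interchanges the two sides (since $\tau([A,B]_q) = [\tau(B), \tau(A)]_q$), so the difference lies in the $-1$ eigenspace of $\tau$; combined with a careful $q$-commutator expansion using the $q$-Dolan/Grady relations, this yields the required vanishing.
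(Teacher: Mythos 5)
Your treatment of $\mathcal W_0$ and $\mathcal W_1$ is correct and agrees with the paper: with $C=\mathcal {\tilde G}_1-\lbrack \mathcal W_0,\mathcal W_1\rbrack_q$, relations (iii), (iv) of Theorem \ref{thm:m1Int} say precisely that $\lbrack \mathcal W_0,C\rbrack=0=\lbrack C,\mathcal W_1\rbrack$. The gap is in the last step. Your reduction via the $q$-Jacobi identity and relations (v), (vi) to the displayed identity is fine, but you then assert that this identity must follow from the $q$-Dolan/Grady relations alone because it is linear in $\mathcal {\tilde G}_k$ and the $\mathcal {\tilde G}_k$-slot ``can be regarded as formal.'' That is false. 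Write the difference of the two sides of your identity as $\sum_i u_i X v_i$ with $X=\mathcal {\tilde G}_k$ and $u_i, v_i$ words in $\mathcal W_0,\mathcal W_1$; the terms in which $X$ appears at the far left sum to $X\bigl(q\,\mathcal W_0\mathcal W_1^2\mathcal W_0-q^{-1}\mathcal W_1\mathcal W_0^2\mathcal W_1\bigr)$, and $q\,\mathcal W_0\mathcal W_1^2\mathcal W_0-q^{-1}\mathcal W_1\mathcal W_0^2\mathcal W_1\neq 0$ in $\langle \mathcal W_0,\mathcal W_1\rangle\cong O_q$, since neither $q$-Dolan/Grady relation involves these monomials. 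So the identity is not a formal consequence in the $X$-slot; it genuinely depends on the relations satisfied by $\mathcal {\tilde G}_k$ (and would presumably require an induction on $k$ that you do not set up). The $\tau$-symmetry observation only places the difference in the $(-1)$-eigenspace of $\tau$, which does not force it to vanish.

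The paper avoids this computation entirely: by the isomorphism $\phi$ around \eqref{eq:phiAction}, the algebra $\mathcal O_q$ is generated by $\mathcal W_0$, $\mathcal W_1$ together with the central elements $\lbrace \mathcal Z_n\rbrace_{n=1}^\infty$, so any element that commutes with $\mathcal W_0$ and $\mathcal W_1$ is automatically central. Replacing everything after your second paragraph with this observation yields a complete proof, identical to the paper's.
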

\begin{proof} 
By Theorem \ref{thm:m1Int}(iii),(iv) the element $\mathcal {\tilde G}_1 - \lbrack \mathcal W_0, \mathcal W_1 \rbrack_q$ commutes with $\mathcal W_0$ and $\mathcal W_1$.
 By our discussion of $\phi$ around \eqref{eq:phiAction}, the
 algebra $\mathcal O_q$ is generated by  $\mathcal W_0$, $\mathcal W_1$ together with the center of $\mathcal O_q$.
By these comments, the element $\mathcal {\tilde G}_1 - \lbrack \mathcal W_0, \mathcal W_1 \rbrack_q$ commutes with everything in $\mathcal O_q$.
By construction $\lbrack \mathcal W_0, \mathcal W_1 \rbrack_q = -q B_\delta$. The result follows.
\end{proof}

\begin{lemma} \label{lem:GWcom} For $k \in \mathbb N$ the following hold in $\mathcal O_q$;
\begin{enumerate}
\item[\rm (i)] $\lbrack \mathcal {\tilde G}_{k+1}, \mathcal W_0\rbrack_q = (q-q^{-1})\mathcal W_0 \mathcal {\tilde G}_{k+1} -q^2 \lbrack B_\delta, \mathcal W_{-k} \rbrack$;
\item[\rm (ii)] $\lbrack \mathcal W_1, \mathcal {\tilde G}_{k+1} \rbrack_q = (q-q^{-1})\mathcal W_1 \mathcal {\tilde G}_{k+1} +\lbrack B_\delta, \mathcal W_{k+1} \rbrack$.
\end{enumerate}
\end{lemma}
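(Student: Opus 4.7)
The plan is to reduce each of (i), (ii) to an ordinary-commutator identity and then derive that identity from the defining relations \eqref{eq:3p1}--\eqref{eq:3p4}. The key bookkeeping observation is the $q$-commutator identity $\lbrack A,B\rbrack_q - \lbrack B,A\rbrack_q = (q+q^{-1})\lbrack A,B\rbrack$, which lets one pass freely between the two forms. Applying it to the left-hand side of (i) and subtracting $(q-q^{-1})\mathcal W_0 \mathcal{\tilde G}_{k+1}$ on both sides shows that (i) is equivalent to
\begin{align*}
\lbrack \mathcal W_0, \mathcal{\tilde G}_{k+1}\rbrack = q\lbrack B_\delta, \mathcal W_{-k}\rbrack,
\end{align*}
and an identical manipulation on (ii) shows it is equivalent to
\begin{align*}
\lbrack \mathcal W_1, \mathcal{\tilde G}_{k+1}\rbrack = q\lbrack B_\delta, \mathcal W_{k+1}\rbrack.
\end{align*}

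To prove the first equivalent form, I would substitute $B_\delta = q^{-2}\mathcal W_1\mathcal W_0 - \mathcal W_0\mathcal W_1$ and use $\lbrack \mathcal W_{-k}, \mathcal W_0\rbrack=0$ from \eqref{eq:3p4} to expand the right-hand side as $q^{-1}C\mathcal W_0 - q\mathcal W_0 C$, where $C = \lbrack \mathcal W_1, \mathcal W_{-k}\rbrack = (\mathcal G_{k+1} - \mathcal{\tilde G}_{k+1})/(q+q^{-1})$ by \eqref{eq:3p1}. This equals $-\lbrack \mathcal W_0, C\rbrack_q$. Distributing and invoking \eqref{eq:3p2} to replace $\lbrack \mathcal W_0, \mathcal G_{k+1}\rbrack_q$ with $\lbrack \mathcal{\tilde G}_{k+1}, \mathcal W_0\rbrack_q$ leaves
\begin{align*}
-\lbrack \mathcal W_0, C\rbrack_q = (q+q^{-1})^{-1}\bigl(\lbrack \mathcal W_0, \mathcal{\tilde G}_{k+1}\rbrack_q - \lbrack \mathcal{\tilde G}_{k+1}, \mathcal W_0\rbrack_q\bigr)= \lbrack \mathcal W_0, \mathcal{\tilde G}_{k+1}\rbrack
\end{align*}
by the $q$-commutator identity of the first paragraph. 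This establishes (i).

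The proof of (ii) is mirror-symmetric: expand $\lbrack B_\delta, \mathcal W_{k+1}\rbrack$, use $\lbrack \mathcal W_{k+1}, \mathcal W_1\rbrack=0$ from \eqref{eq:3p4} and $\lbrack \mathcal W_0, \mathcal W_{k+1}\rbrack = (\mathcal{\tilde G}_{k+1}-\mathcal G_{k+1})/(q+q^{-1})$ from \eqref{eq:3p1}, and then apply \eqref{eq:3p3} in place of \eqref{eq:3p2}. One might hope to deduce (ii) directly by applying the antiautomorphism $\tau$ from Definition \ref{def:tauA} to (i), using $\tau(B_\delta)=B_\delta$; however, $\tau$ turns $\mathcal W_0\mathcal{\tilde G}_{k+1}$ into $\mathcal{\tilde G}_{k+1}\mathcal W_1$ (not $\mathcal W_1\mathcal{\tilde G}_{k+1}$), so the resulting identity matches (ii) only after using the rearranged form of (ii) itself to absorb the order swap. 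For this reason the parallel direct computation is the cleanest route. The whole argument is a careful bookkeeping exercise, and no substantial obstacle is anticipated beyond keeping track of $q$- versus ordinary commutators.
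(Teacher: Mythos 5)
Your proof is correct, but it takes a genuinely different route from the paper's. Both arguments begin the same way, by peeling the $q$-commutator on the left-hand side down to an ordinary commutator (your reduction of (i) to $\lbrack \mathcal W_0, \mathcal{\tilde G}_{k+1}\rbrack = q\lbrack B_\delta, \mathcal W_{-k}\rbrack$ checks out). From there the paper does not touch the definition of $B_\delta$ at all: it uses \eqref{eq:3p7} with $\ell=0$ to rewrite $\lbrack \mathcal{\tilde G}_{k+1}, \mathcal W_0\rbrack$ as $\lbrack \mathcal{\tilde G}_1, \mathcal W_{-k}\rbrack$, and then invokes Lemma \ref{lem:G1} (the centrality of $\mathcal{\tilde G}_1 + qB_\delta$) to convert this to $-q\lbrack B_\delta, \mathcal W_{-k}\rbrack$; part (ii) is handled by the parallel computation with \eqref{eq:3p9}. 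You instead expand $B_\delta = q^{-2}\mathcal W_1\mathcal W_0 - \mathcal W_0\mathcal W_1$ explicitly and work only from the defining relations \eqref{eq:3p1}--\eqref{eq:3p4}. Your route is more self-contained: it bypasses Lemma \ref{lem:G1}, whose proof rests on Theorem \ref{thm:m1Int} and the description of the center via $\phi$, so your argument could be placed much earlier in the development. The paper's route is shorter once Lemma \ref{lem:G1} is in hand, and it makes visible the index-shifting mechanism $\lbrack \mathcal{\tilde G}_{k+1}, \mathcal W_0\rbrack = \lbrack \mathcal{\tilde G}_1, \mathcal W_{-k}\rbrack$ that the surrounding section exploits. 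Your side remark about why applying $\tau$ to (i) does not immediately yield (ii) is accurate and well taken; note that the paper likewise avoids $\tau$ here and proves (ii) by the mirror computation.
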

\begin{proof} (i) Observe that
\begin{align*}
\lbrack \mathcal {\tilde G}_{k+1},\mathcal W_0 \rbrack_q = (q-q^{-1}) \mathcal W_0 \mathcal {\tilde G}_{k+1}+q \lbrack \mathcal {\tilde G}_{k+1}, \mathcal W_0\rbrack.
\end{align*}
Setting $\ell=0$ in  \eqref{eq:3p7}  and using Lemma \ref{lem:G1},
\begin{align*}
\lbrack \mathcal {\tilde G}_{k+1}, \mathcal W_0\rbrack = \lbrack \mathcal {\tilde G}_1, \mathcal W_{-k} \rbrack = -q \lbrack B_\delta, \mathcal W_{-k} \rbrack.
\end{align*}
\noindent The result follows.
\\
\noindent (ii) Observe that
\begin{align*}
\lbrack \mathcal W_1, \mathcal {\tilde G}_{k+1} \rbrack_q = (q-q^{-1}) \mathcal W_1 \mathcal {\tilde G}_{k+1}-q^{-1} \lbrack \mathcal {\tilde G}_{k+1}, \mathcal W_1\rbrack.
\end{align*}
Setting $\ell=0$ in  \eqref{eq:3p9} and using Lemma \ref{lem:G1},
\begin{align*}
\lbrack \mathcal {\tilde G}_{k+1}, \mathcal W_1\rbrack = \lbrack \mathcal {\tilde G}_1, \mathcal W_{k+1} \rbrack = -q \lbrack B_\delta, \mathcal W_{k+1} \rbrack.
\end{align*}
\noindent The result follows.
\end{proof}

\begin{lemma}\label{lem:Wind}
For $n\geq 1$ the following hold in $\mathcal O_q$:
\begin{align}
\label{eq:Wind1}
\mathcal W_{-n} &= \mathcal W_n -\frac{(q-q^{-1}) \mathcal W_0 \mathcal{\tilde G}_n}{(q^2-q^{-2})^2} + \frac{q^2 \lbrack B_\delta, \mathcal W_{1-n}\rbrack}{(q^2-q^{-2})^2};
\\
\mathcal W_{n+1} &=\mathcal W_{1-n}-\frac{(q-q^{-1}) \mathcal W_1\mathcal {\tilde G}_n}{(q^2-q^{-2})^2} -\frac{\lbrack B_\delta, \mathcal W_n\rbrack}{(q^2-q^{-2})^2}.
\label{eq:Wind2}
\end{align}
\end{lemma}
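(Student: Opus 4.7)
The two identities are both rearrangements of the defining relations \eqref{eq:3p2} and \eqref{eq:3p3}, once the right-hand sides are rewritten via Lemma \ref{lem:GWcom}. The plan is therefore to isolate the difference $\mathcal W_{-n}-\mathcal W_n$ (resp.\ $\mathcal W_{n+1}-\mathcal W_{1-n}$) from the $q$-commutator relations and then substitute the formulas of Lemma \ref{lem:GWcom}.

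For \eqref{eq:Wind1}, I would start from \eqref{eq:3p2} with $k$ replaced by $n-1$, which reads $\lbrack \mathcal{\tilde G}_n, \mathcal W_0\rbrack_q = \rho\,\mathcal W_{-n} - \rho\,\mathcal W_n$. Solving for the difference gives
\[
\mathcal W_{-n} - \mathcal W_n = \rho^{-1}\,\lbrack \mathcal{\tilde G}_n, \mathcal W_0\rbrack_q .
\]
Next, invoke Lemma \ref{lem:GWcom}(i) with $k=n-1$, which expands
\[
\lbrack \mathcal{\tilde G}_n, \mathcal W_0\rbrack_q = (q-q^{-1})\,\mathcal W_0\,\mathcal{\tilde G}_n - q^2\,\lbrack B_\delta, \mathcal W_{1-n}\rbrack .
\]
Substituting and using $\rho = -(q^2-q^{-2})^2$ immediately yields \eqref{eq:Wind1}.

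The proof of \eqref{eq:Wind2} is entirely parallel. Apply \eqref{eq:3p3} with $k$ replaced by $n-1$ to obtain
\[
\lbrack \mathcal W_1, \mathcal{\tilde G}_n\rbrack_q = \rho\,\mathcal W_{n+1} - \rho\,\mathcal W_{1-n},
\]
solve for $\mathcal W_{n+1}-\mathcal W_{1-n}$, and then expand the left-hand side using Lemma \ref{lem:GWcom}(ii) with $k=n-1$:
\[
\lbrack \mathcal W_1, \mathcal{\tilde G}_n\rbrack_q = (q-q^{-1})\,\mathcal W_1\,\mathcal{\tilde G}_n + \lbrack B_\delta, \mathcal W_n\rbrack .
\]
Dividing by $\rho$ and rearranging gives \eqref{eq:Wind2}.

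There is essentially no obstacle here — the content of Lemma \ref{lem:Wind} is already packaged into \eqref{eq:3p2}, \eqref{eq:3p3} and Lemma \ref{lem:GWcom}. The only care required is bookkeeping the sign from $\rho = -(q^2-q^{-2})^2$ and matching the index shift ($k+1 \mapsto n$, hence $k = n-1$) so that the subscripts $\mathcal W_{-k-1}$ and $\mathcal W_{k+1}$ in \eqref{eq:3p2}, and $\mathcal W_{-k}$ and $\mathcal W_{k+2}$ in \eqref{eq:3p3}, become the $\mathcal W_{1-n}, \mathcal W_{n+1}$ appearing in the claim. As a sanity check one can also verify that the two formulas are interchanged by applying the antiautomorphism $\tau$ from Definition \ref{def:tauA}, using Lemma \ref{lem:asym2}.
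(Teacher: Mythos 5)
Your proof is correct and is exactly the paper's argument: the paper's one-line proof says to use the right-hand equations in \eqref{eq:3p2}, \eqref{eq:3p3} together with Lemma \ref{lem:GWcom}, which is precisely the substitution (with $k=n-1$) and sign bookkeeping you carry out. (Your closing remark about $\tau$ interchanging the two formulas is only true after an extra commutation step, since $\tau$ reverses the product $\mathcal W_0\mathcal{\tilde G}_n$, but this is an aside and does not affect the proof.)
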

\begin{proof} Use the equations on the right in
\eqref{eq:3p2}, \eqref{eq:3p3} along with Lemma  \ref{lem:GWcom}.
\end{proof}
\noindent 
We recall some notation from \cite{BK}. For a negative integer $k$ define
\begin{align*}
B_{k \delta + \alpha_0} = B_{(-k-1)\delta + \alpha_1}, \qquad \qquad 
B_{k\delta+\alpha_1} = B_{(-k-1)\delta+\alpha_0}.
\end{align*}
We have
\begin{align}
 B_{r\delta+\alpha_0} = B_{s\delta+\alpha_1} \qquad \qquad (r,s \in \mathbb Z, \quad r+s=-1).
 \label{eq:extend}
 \end{align}

\begin{lemma} For $n\in \mathbb Z$ the following hold in $\mathcal O_q$:
\begin{align}
\label{eq:nnot1}
\frac{q\lbrack B_\delta, B_{n\delta+\alpha_0} \rbrack}{(q-q^{-1})(q^2-q^{-2})}&= B_{(n+1) \delta +\alpha_0} - B_{(n-1)\delta +\alpha_0};
\\
\label{eq:nnot2}
\frac{q\lbrack B_\delta, B_{n\delta+\alpha_1} \rbrack}{(q-q^{-1})(q^2-q^{-2})}&= B_{(n-1)\delta +\alpha_1} - B_{(n+1)\delta +\alpha_1}.
\end{align}
\end{lemma}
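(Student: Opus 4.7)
My plan is to prove \eqref{eq:nnot1} first for all $n \in \mathbb Z$ by unwinding the recursive definitions \eqref{eq:line1}, \eqref{eq:line2} together with the extension convention \eqref{eq:extend}, and then deduce \eqref{eq:nnot2} by applying the antiautomorphism $\tau$ of Lemma \ref{lem:asym2}.

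For \eqref{eq:nnot1}, I would split into three cases. For $n \geq 1$, the recursion \eqref{eq:line2} applied at index $n+1 \geq 2$ reads
\begin{equation*}
B_{(n+1)\delta+\alpha_0} = B_{(n-1)\delta+\alpha_0} + \frac{q\lbrack B_\delta, B_{n\delta+\alpha_0}\rbrack}{(q-q^{-1})(q^2-q^{-2})},
\end{equation*}
which rearranges immediately to \eqref{eq:nnot1}. For $n = 0$, we have $B_{0 \cdot \delta + \alpha_0} = B_{\alpha_0} = \mathcal W_0$, while \eqref{eq:extend} with $(r,s) = (-1,0)$ gives $B_{-\delta + \alpha_0} = B_{\alpha_1} = \mathcal W_1$; substituting these identifies \eqref{eq:nnot1} at $n = 0$ with the second equation of \eqref{eq:line1}. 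For $n \leq -1$, set $k = -n-1 \geq 0$; using \eqref{eq:extend} we have
\begin{equation*}
B_{n\delta+\alpha_0} = B_{k\delta+\alpha_1}, \qquad B_{(n+1)\delta+\alpha_0} = B_{(k-1)\delta+\alpha_1}, \qquad B_{(n-1)\delta+\alpha_0} = B_{(k+1)\delta+\alpha_1},
\end{equation*}
so the desired identity becomes precisely \eqref{eq:nnot2} at the non-negative index $k$. Hence once \eqref{eq:nnot2} is established for all $n \geq 0$, the case $n \leq -1$ of \eqref{eq:nnot1} follows.

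Now for \eqref{eq:nnot2} I would apply the antiautomorphism $\tau$ from Definition \ref{def:tauA} to \eqref{eq:nnot1}. By Lemma \ref{lem:asym2}, $\tau$ fixes $B_\delta$ and swaps $B_{n\delta+\alpha_0} \leftrightarrow B_{n\delta+\alpha_1}$ for $n \in \mathbb N$; for $n \in \mathbb Z$ negative this still holds after also invoking \eqref{eq:extend}. Since $\tau$ is an antiautomorphism, $\tau\lbrack X,Y\rbrack = -\lbrack \tau X, \tau Y\rbrack$, so applying $\tau$ to \eqref{eq:nnot1} negates the commutator on the left and swaps each $\alpha_0$ to $\alpha_1$ on both sides, giving exactly \eqref{eq:nnot2}. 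Finally, the case $n \leq -1$ of \eqref{eq:nnot2} reduces by \eqref{eq:extend} to the case $k \geq 0$ of \eqref{eq:nnot1}, which was handled in the previous paragraph.

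The argument is essentially a bookkeeping exercise: the content is contained in the defining recursions and in the symmetry under $\tau$. The main obstacle, such as it is, is organizing the case split so that \eqref{eq:nnot1} for $n \geq 0$ and \eqref{eq:nnot2} for $n \geq 0$ are proved independently (from the defining recursions and $\tau$), after which the negative-$n$ cases feed back into each other via \eqref{eq:extend} without circularity.
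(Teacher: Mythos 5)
Your proposal is correct and follows essentially the same route as the paper, whose proof is simply to combine the recursions \eqref{eq:line1}--\eqref{eq:line4} with the extension convention \eqref{eq:extend}; your case split cleanly avoids circularity between the negative-index cases. The only cosmetic difference is that you obtain \eqref{eq:nnot2} for $n\geq 0$ by applying $\tau$ to \eqref{eq:nnot1} rather than reading it off \eqref{eq:line3}, \eqref{eq:line4} directly, which amounts to the same computation.
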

\begin{proof} Use \eqref{eq:line1}--\eqref{eq:line4} and \eqref{eq:extend}.
\end{proof}

\begin{proposition}
\label{prop:WWalt} For $n \in \mathbb N$ the following hold in $\mathcal O_q$:
\begin{align}
\label{eq:recWnm1}
\mathcal W_{-n} &=-(q-q^{-1})^{-1} \sum_{k=0}^n \sum_{\ell=0}^k \binom{k}{\ell} q^{k-2 \ell} \lbrack 2 \rbrack^{-k-2}_q B_{(k-2\ell)\delta + \alpha_0} \mathcal {\tilde G}_{n-k};
\\
\mathcal W_{n+1} &=  -(q-q^{-1})^{-1}  \sum_{k=0}^n \sum_{\ell=0}^k \binom{k}{\ell} q^{2 \ell-k} \lbrack 2 \rbrack^{-k-2}_q B_{(k-2\ell)\delta + \alpha_1} \mathcal {\tilde G}_{n-k}.
\label{eq:recWnm2}
\end{align}
\end{proposition}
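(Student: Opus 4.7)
The approach will be simultaneous induction on $n$ for the two formulas \eqref{eq:recWnm1} and \eqref{eq:recWnm2}. The base case $n=0$ is a direct check: the double sum collapses to a single term, and \eqref{eq:GG0} combined with $B_{\alpha_0}=\mathcal W_0$ and $B_{\alpha_1}=\mathcal W_1$ shows that the right-hand sides reduce to $\mathcal W_0$ and $\mathcal W_1$ respectively.

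For the inductive step at $n \geq 1$, Lemma \ref{lem:Wind} expresses $\mathcal W_{-n}$ and $\mathcal W_{n+1}$ in terms of $\mathcal W_n = \mathcal W_{(n-1)+1}$, $\mathcal W_{1-n} = \mathcal W_{-(n-1)}$, a $\mathcal W_i\mathcal{\tilde G}_n$ piece, and a commutator with $B_\delta$. By the inductive hypothesis one substitutes the known formulas for $\mathcal W_n$ and $\mathcal W_{1-n}$. A useful preliminary is that $B_\delta$ commutes with every $\mathcal{\tilde G}_j$: this follows from Lemma \ref{lem:G1} (which says $\mathcal{\tilde G}_1 + qB_\delta$ is central) together with \eqref{eq:3p10} (the mutual commutativity of the $\mathcal{\tilde G}$'s). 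Consequently the commutator $[B_\delta,\mathcal W_{1-n}]$ passes through every $\mathcal{\tilde G}_{n-1-k}$ and reduces to $[B_\delta, B_{(k-2\ell)\delta+\alpha_0}]$, which is evaluated via \eqref{eq:nnot1} and produces the difference $B_{(k-2\ell+1)\delta+\alpha_0} - B_{(k-2\ell-1)\delta+\alpha_0}$. The $\mathcal W_n$ contribution, whose $B$-factors appear in $\alpha_1$ form by the inductive version of \eqref{eq:recWnm2}, must be converted to $\alpha_0$ form via \eqref{eq:extend} together with the reindexing $\ell \mapsto k-\ell$, which also flips the $q$-exponent $q^{2\ell-k}$ to $q^{k-2\ell}$.

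After these substitutions the coefficient of each monomial $B_{(k+1-2m)\delta+\alpha_0}\mathcal{\tilde G}_{n-1-k}$ becomes a sum of two terms proportional to $\binom{k}{m-1}$ and $\binom{k}{m}$, both carrying the common factor $q^{k+1-2m}$. Pascal's identity then collapses these to $\binom{k+1}{m}$, which after the shift $k \mapsto k-1$ matches the $k \geq 1$ portion of the right-hand side of \eqref{eq:recWnm1}; the remaining $k=0$ term coincides with the explicit middle summand $-(q-q^{-1})\mathcal W_0\mathcal{\tilde G}_n/(q^2-q^{-2})^2$ of Lemma \ref{lem:Wind}. Formula \eqref{eq:recWnm2} is handled in parallel, using \eqref{eq:Wind2} and \eqref{eq:nnot2} in the corresponding places. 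The main obstacle I anticipate is the combinatorial bookkeeping: keeping track of signs, of the powers of $q$ and $[2]_q$, and of the conversions between the $\alpha_0$, $\alpha_1$, and negative-index conventions, so that the two shifted contributions from the commutator combine with the reindexed $\mathcal W_n$ contribution to yield exactly the Pascal configuration $\binom{k}{m-1}+\binom{k}{m}=\binom{k+1}{m}$.
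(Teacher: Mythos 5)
Your proposal is correct and follows essentially the same route as the paper: induction on $n$, with the inductive step carried out by evaluating the right-hand sides of \eqref{eq:Wind1}, \eqref{eq:Wind2} using the inductive hypothesis together with \eqref{eq:extend}, \eqref{eq:nnot1}, \eqref{eq:nnot2}. You also correctly supply a detail the paper leaves implicit, namely that $B_\delta$ commutes with each $\mathcal{\tilde G}_j$ (via Lemma \ref{lem:G1} and \eqref{eq:3p10}), which is needed to push the commutator past the $\mathcal{\tilde G}$ factors before Pascal's identity collapses the coefficients.
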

\begin{proof} We use induction on $n$.
First assume that $n=0$. Then \eqref{eq:recWnm1}, \eqref{eq:recWnm2} hold. Next assume that $n\geq 1$.
To obtain \eqref{eq:recWnm1}, evaluate the right-hand side of 
 \eqref{eq:Wind1} 
 using induction along with \eqref{eq:extend}, \eqref{eq:nnot1}.
 To obtain \eqref{eq:recWnm2}, evaluate the right-hand side of 
 \eqref{eq:Wind2}  
  using induction along with \eqref{eq:extend}, \eqref{eq:nnot2}.
\end{proof}

\begin{proposition}
\label{prop:WWalta} For $n \in \mathbb N$ the following hold in $\mathcal O_q$:
\begin{align}
\mathcal W_{-n} &=  -(q-q^{-1})^{-1}  \sum_{k=0}^n \sum_{\ell=0}^k \binom{k}{\ell} q^{2 \ell-k} \lbrack 2 \rbrack^{-k-2}_q 
 \mathcal {\tilde G}_{n-k} B_{(k-2\ell)\delta + \alpha_0};
\label{eq:recWnm1a}
\\
\mathcal W_{n+1} &=-(q-q^{-1})^{-1} \sum_{k=0}^n \sum_{\ell=0}^k \binom{k}{\ell} q^{k-2 \ell} \lbrack 2 \rbrack^{-k-2}_q  \mathcal {\tilde G}_{n-k} B_{(k-2\ell)\delta + \alpha_1}.
\label{eq:recWnm2a}
\end{align}
\end{proposition}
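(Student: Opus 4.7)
The plan is to deduce Proposition \ref{prop:WWalta} from Proposition \ref{prop:WWalt} by applying the antiautomorphism $\tau$ of $\mathcal O_q$ (from Definition \ref{def:tauA}) to each side of \eqref{eq:recWnm1} and \eqref{eq:recWnm2}. Recall that $\tau$ sends $\mathcal W_{-k} \leftrightarrow \mathcal W_{k+1}$ and fixes each $\mathcal {\tilde G}_{k+1}$. By Lemma \ref{lem:asym2}, $\tau$ also interchanges $B_{n\delta+\alpha_0} \leftrightarrow B_{n\delta+\alpha_1}$ for $n \in \mathbb N$. Moreover, by \eqref{eq:extend}, the elements $B_{(k-2\ell)\delta+\alpha_0}$ appearing in \eqref{eq:recWnm1} lie in the set $\{B_{r\delta+\alpha_0}\}_{r \in \mathbb Z}$, and since $B_{r\delta+\alpha_0}=B_{s\delta+\alpha_1}$ whenever $r+s=-1$, the involution $\tau$ is well-defined on these extended symbols with the prescription $\tau(B_{r\delta+\alpha_0})=B_{r\delta+\alpha_1}$ for all $r \in \mathbb Z$ (and similarly with the subscripts swapped).

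First I would apply $\tau$ to the identity \eqref{eq:recWnm1}. The left-hand side becomes $\mathcal W_{n+1}$. Since $\tau$ is an \emph{anti}automorphism, it reverses the order of each product $B_{(k-2\ell)\delta+\alpha_0}\mathcal {\tilde G}_{n-k}$, and by the remarks above it sends this product to $\mathcal {\tilde G}_{n-k} B_{(k-2\ell)\delta+\alpha_1}$. The scalar coefficients are unaffected, so the resulting identity is exactly \eqref{eq:recWnm2a}. Next I would apply $\tau$ to \eqref{eq:recWnm2}: the left-hand side becomes $\mathcal W_{-n}$, and each term $B_{(k-2\ell)\delta+\alpha_1}\mathcal {\tilde G}_{n-k}$ is sent to $\mathcal {\tilde G}_{n-k} B_{(k-2\ell)\delta+\alpha_0}$. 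This produces \eqref{eq:recWnm1a}.

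The main (minor) obstacle is checking that the action of $\tau$ on the symbols $B_{r\delta+\alpha_0}$ with $r<0$ (which enter via \eqref{eq:extend}) is consistent: one must verify that $\tau(B_{r\delta+\alpha_0})=B_{r\delta+\alpha_1}$ and $\tau(B_{r\delta+\alpha_1})=B_{r\delta+\alpha_0}$ remain valid for negative $r$, which amounts to the identity $\tau(B_{(-r-1)\delta+\alpha_1})=B_{(-r-1)\delta+\alpha_0}$ supplied by Lemma \ref{lem:asym2}. Once this is confirmed, no further computation is needed and both \eqref{eq:recWnm1a} and \eqref{eq:recWnm2a} follow immediately from Proposition \ref{prop:WWalt}.
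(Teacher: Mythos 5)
Your proposal is correct and is essentially identical to the paper's own proof: the paper likewise obtains \eqref{eq:recWnm1a} (resp.\ \eqref{eq:recWnm2a}) by applying the antiautomorphism $\tau$ to \eqref{eq:recWnm2} (resp.\ \eqref{eq:recWnm1}) and evaluating via Definition \ref{def:tauA} and Lemma \ref{lem:asym2}. Your extra check that $\tau(B_{r\delta+\alpha_0})=B_{r\delta+\alpha_1}$ persists for negative $r$ via \eqref{eq:extend} is a sensible detail that the paper leaves implicit.
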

\begin{proof} To get \eqref{eq:recWnm1a} (resp. \eqref{eq:recWnm2a}), apply the antiautomorphism $\tau$ to each side of \eqref{eq:recWnm2} (resp. \eqref{eq:recWnm1})  and evaluate the
results using Definition \ref{def:tauA}
and
Lemma  \ref{lem:asym2}. 
\end{proof}

\section{A variation on the main results}

\noindent Recall the automorphism $\sigma$ of $\mathcal O_q$, from Lemma \ref{lem:autA}.
In this section we use $\sigma$ to obtain a variation on Theorems \ref{thm:m1Int}, \ref{thm:m2} and Propositions \ref{prop:WWalt}, \ref{prop:WWalta}.
\medskip

\noindent
Here is a variation on Theorem \ref{thm:m1Int}.
 \begin{proposition} \label{thm:m1com}
 The algebra $\mathcal O_q$  has a presentation by generators 
$\mathcal W_0$, $\mathcal W_1$, $\lbrace \mathcal {G}_{k+1} \rbrace_{k \in \mathbb N}$ and relations
\begin{enumerate}
\item[\rm (i)]
$\lbrack \mathcal W_0, \lbrack \mathcal W_0, \lbrack \mathcal W_0, \mathcal W_1\rbrack_q \rbrack_{q^{-1}} \rbrack =(q^2-q^{-2})^2 \lbrack \mathcal W_1, \mathcal W_0 \rbrack$;
\item[\rm (ii)]
$\lbrack \mathcal W_1, \lbrack \mathcal W_1, \lbrack \mathcal W_1, \mathcal W_0\rbrack_q \rbrack_{q^{-1}}\rbrack = (q^2-q^{-2})^2  \lbrack \mathcal W_0, \mathcal W_1 \rbrack$;
\item[\rm (iii)] 
$\lbrack \mathcal W_1, \mathcal {G}_1 \rbrack = 
\lbrack \mathcal W_1, \lbrack \mathcal W_1, \mathcal W_0 \rbrack_q \rbrack$;
\item[\rm (iv)] $\lbrack \mathcal { G}_1, \mathcal W_0 \rbrack = 
\lbrack \lbrack \mathcal W_1, \mathcal W_0 \rbrack_q, \mathcal W_0 \rbrack$;
\item[\rm (v)] 
for $k\geq 1$,
\begin{align*}
&\lbrack \mathcal {G}_{k+1}, \mathcal W_1 \rbrack = 
\frac{
\lbrack \mathcal W_1, \lbrack \mathcal W_1, \lbrack \mathcal W_0,
\mathcal {G}_k
\rbrack_q
\rbrack_q
\rbrack}{(q^2-q^{-2})^2};
\end{align*}
\item[\rm (vi)] for $k\geq 1$,
\begin{align*}
\lbrack \mathcal W_0, \mathcal {G}_{k+1}\rbrack = 
\frac{
\lbrack\lbrack \lbrack \mathcal {G}_k, \mathcal W_1 \rbrack_q, 
\mathcal W_0 \rbrack_q,
\mathcal W_0
\rbrack}{(q^2-q^{-2})^2};
\end{align*}
\item[\rm (vii)] for $k, \ell \in \mathbb N$,
\begin{align*}
\lbrack \mathcal {G}_{k+1}, \mathcal {G}_{\ell+1} \rbrack=0.
\end{align*}
\end{enumerate}
\end{proposition}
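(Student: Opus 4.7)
The plan is to deduce Proposition \ref{thm:m1com} from Theorem \ref{thm:m1Int} by transporting the compact presentation through the automorphism $\sigma$ of $\mathcal O_q$ described in Lemma \ref{lem:autA}. Recall that $\sigma$ swaps $\mathcal W_0 \leftrightarrow \mathcal W_1$ and $\mathcal G_{k+1} \leftrightarrow \mathcal {\tilde G}_{k+1}$ for each $k \in \mathbb N$, and $\sigma^2 = 1$. The key observation is a term-by-term check: each of the relations (i)--(vii) in Proposition \ref{thm:m1com} is exactly the image under $\sigma$ of the correspondingly-numbered relation in Theorem \ref{thm:m1Int}. For instance, applying $\sigma$ to relation (v) of Theorem \ref{thm:m1Int} replaces $\mathcal W_0$ by $\mathcal W_1$, $\mathcal W_1$ by $\mathcal W_0$, and $\mathcal {\tilde G}_k$ by $\mathcal G_k$, yielding exactly relation (v) of Proposition \ref{thm:m1com}; the remaining items are verified analogously, with relations (i) and (ii) being swapped with each other by $\sigma$.

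With this observation in hand, I would argue as follows. Let $\mathcal O^\wedge_q$ denote the algebra defined by the generators and relations in the statement of Proposition \ref{thm:m1com}. Because the elements $\mathcal W_0, \mathcal W_1, \{\mathcal G_{k+1}\}_{k \in \mathbb N}$ of $\mathcal O_q$ satisfy these relations (each is the $\sigma$-image of a relation known to hold in $\mathcal O_q$, by Lemma \ref{eq:DG}, Lemma \ref{lem:newrels1}, Lemma \ref{lem:newrels}, and \eqref{eq:3p10}), there is an algebra homomorphism $\natural': \mathcal O^\wedge_q \to \mathcal O_q$ sending generators to generators. We want to prove $\natural'$ is an isomorphism.

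For this, I would define an algebra homomorphism $\Phi : \mathcal O^\vee_q \to \mathcal O^\wedge_q$ on generators by $\mathcal W_0 \mapsto \mathcal W_1$, $\mathcal W_1 \mapsto \mathcal W_0$, and $\mathcal {\tilde G}_{k+1} \mapsto \mathcal G_{k+1}$. The fact that $\Phi$ is well-defined is exactly the term-by-term verification above: the defining relations of $\mathcal O^\vee_q$ (namely (i)--(vii) of Definition \ref{thm:m1}) get carried by $\Phi$ onto the defining relations of $\mathcal O^\wedge_q$. Symmetrically, there is a homomorphism $\Psi : \mathcal O^\wedge_q \to \mathcal O^\vee_q$ defined the same way, and $\Psi \circ \Phi$ and $\Phi \circ \Psi$ act as the identity on generators, hence are identity maps. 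Thus $\Phi$ is an isomorphism.

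Finally, combining with Theorem \ref{thm:nat} (which gives $\mathcal O^\vee_q \cong \mathcal O_q$ via $\natural$) and the automorphism $\sigma$ of $\mathcal O_q$, the composition
\begin{equation*}
\mathcal O^\wedge_q \;\xrightarrow{\;\Psi\;}\; \mathcal O^\vee_q \;\xrightarrow{\;\natural\;}\; \mathcal O_q \;\xrightarrow{\;\sigma\;}\; \mathcal O_q
\end{equation*}
sends $\mathcal W_0 \mapsto \mathcal W_0$, $\mathcal W_1 \mapsto \mathcal W_1$, and $\mathcal G_{k+1} \mapsto \mathcal G_{k+1}$, so it coincides with $\natural'$. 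Since all three maps in the composition are algebra isomorphisms, $\natural'$ is an isomorphism, proving Proposition \ref{thm:m1com}. There is no genuine obstacle here: the whole content is the symmetry observation, and the main step requiring care is simply the bookkeeping verification that $\sigma$ carries each of the seven relations of Theorem \ref{thm:m1Int} precisely onto the correspondingly numbered relation of Proposition \ref{thm:m1com}.
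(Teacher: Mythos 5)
Your proof is correct and is essentially the paper's own argument, which simply says to apply the automorphism $\sigma$ of Lemma \ref{lem:autA} everywhere in Theorem \ref{thm:m1Int}; your term-by-term check that $\sigma$ carries each relation of Theorem \ref{thm:m1Int} onto the correspondingly numbered relation of Proposition \ref{thm:m1com} (with (i) and (ii) interchanged) is exactly the content of that one-line proof, merely spelled out via the auxiliary presentation $\mathcal O^\wedge_q$ and the composition $\sigma\circ\natural\circ\Psi$.
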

\begin{proof} Apply the automorphism $\sigma$ everywhere in Theorem \ref{thm:m1Int}, and use Lemma \ref{lem:autA}.
\end{proof}

\noindent Next we give a variation on Theorem \ref{thm:m2}. Let $\mathcal G$ denote the subalgebra of $\mathcal O_q$ generated by $\lbrace \mathcal G_{k+1} \rbrace_{k\in \mathbb N}$.
\begin{proposition}
\label{prop:m2Com}
Theorem \ref{thm:m2} remains valid if $\mathcal {\tilde G}$ is replaced by $\mathcal G$.
\end{proposition}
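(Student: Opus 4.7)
The plan is to derive this proposition by applying the automorphism $\sigma$ of $\mathcal O_q$ from Lemma \ref{lem:autA} everywhere in Theorem \ref{thm:m2}. By Lemma \ref{lem:autA}, $\sigma$ swaps $\mathcal W_0 \leftrightarrow \mathcal W_1$ and swaps $\mathcal G_{k+1} \leftrightarrow \mathcal {\tilde G}_{k+1}$ for $k\in \mathbb N$. Consequently $\sigma$ preserves the subalgebra $\langle \mathcal W_0, \mathcal W_1 \rangle$ setwise and restricts to an algebra isomorphism $\mathcal{\tilde G} \to \mathcal G$ sending $\mathcal{\tilde G}_n \mapsto \mathcal G_n$ for $n\geq 1$.

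For part (i) there is nothing to do, since the statement involves only $\langle \mathcal W_0, \mathcal W_1 \rangle$ and is a direct quotation of Theorem \ref{thm:m2}(i). For part (ii), I would compose the algebra isomorphism $\mathbb F \lbrack z_1, z_2, \ldots \rbrack \to \mathcal{\tilde G}$ from Theorem \ref{thm:m2}(ii) with the restriction $\sigma|_{\mathcal{\tilde G}} : \mathcal{\tilde G} \to \mathcal G$; the resulting composition is an algebra isomorphism $\mathbb F \lbrack z_1, z_2, \ldots \rbrack \to \mathcal G$ sending $z_n \mapsto \mathcal G_n$ for $n \geq 1$.

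For part (iii), I would exhibit the desired multiplication map as the composition
\begin{equation*}
\langle \mathcal W_0, \mathcal W_1 \rangle \otimes \mathcal G
\xrightarrow{\sigma^{-1}\otimes \sigma^{-1}}
\langle \mathcal W_0, \mathcal W_1 \rangle \otimes \mathcal{\tilde G}
\xrightarrow{\mathrm{mult}}
\mathcal O_q
\xrightarrow{\sigma}
\mathcal O_q,
\end{equation*}
which sends $w \otimes g \mapsto \sigma\bigl(\sigma^{-1}(w)\sigma^{-1}(g)\bigr) = wg$. Each of the three factors is a vector space isomorphism: the middle one by Theorem \ref{thm:m2}(iii), and the outer two because $\sigma$ is an automorphism of $\mathcal O_q$ that restricts to bijections on each of $\langle \mathcal W_0, \mathcal W_1 \rangle$, $\mathcal{\tilde G}$, and $\mathcal G$. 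Hence the composition, which is the multiplication map of interest, is a vector space isomorphism. Part (iv) follows analogously, either by applying $\sigma$ to Theorem \ref{thm:m2}(iv), or by applying the antiautomorphism $\tau$ from Definition \ref{def:tauA} to the freshly established part (iii), using that $\tau$ fixes each $\mathcal G_{k+1}$ and preserves $\langle \mathcal W_0, \mathcal W_1 \rangle$.

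No substantive obstacle is anticipated: the content of the argument reduces to the observation that $\sigma(\mathcal{\tilde G}) = \mathcal G$ as subalgebras, which is immediate from the action of $\sigma$ on the generators $\lbrace \mathcal{\tilde G}_{k+1} \rbrace_{k \in \mathbb N}$.
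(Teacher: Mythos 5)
Your proposal is correct and takes essentially the same approach as the paper, whose entire proof is ``Apply the automorphism $\sigma$ everywhere in Theorem \ref{thm:m2}, and use Lemma \ref{lem:autA}.'' You have simply spelled out the routine details (that $\sigma$ preserves $\langle \mathcal W_0, \mathcal W_1\rangle$, carries $\mathcal{\tilde G}$ onto $\mathcal G$, and conjugates one multiplication map into the other), which the paper leaves implicit.
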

\begin{proof} Apply the automorphism $\sigma$ everywhere in Theorem \ref{thm:m2}, and use Lemma \ref{lem:autA}.
\end{proof}
\noindent Next we give a variation on Propositions  \ref{prop:WWalt}, \ref{prop:WWalta}.
For $n \in \mathbb N$ let  ${\tilde B}_{n \delta+ \alpha_0} $ (resp. ${\tilde B}_{n \delta+ \alpha_1} $) denote the $\sigma$-image of $ B_{n \delta+ \alpha_0} $ (resp. $ B_{n \delta+ \alpha_1} $).
For $n\geq 1$ let ${\tilde B}_{n\delta}$ denote the $\sigma$-image of $B_{n\delta}$. These elements satisfy the following recursion.
We have $\tilde B_\delta  = q^{-2}\mathcal W_0 \mathcal W_1 - \mathcal W_1 \mathcal W_0$. We have
\begin{align*}
&\tilde B_{\alpha_0}=\mathcal W_1,  \qquad \qquad 
\tilde B_{\delta+\alpha_0} = \mathcal W_0 + 
\frac{q \lbrack \tilde B_{\delta}, \mathcal W_1\rbrack}{(q-q^{-1})(q^2-q^{-2})},
\\
&
\tilde B_{n \delta+\alpha_0} = \tilde B_{(n-2)\delta+\alpha_0}
+ 
\frac{q \lbrack \tilde B_{\delta}, \tilde B_{(n-1)\delta+\alpha_0}\rbrack}{(q-q^{-1})(q^2-q^{-2})} \qquad \qquad n\geq 2
\end{align*}
and 
\begin{align*}
&\tilde B_{\alpha_1}=\mathcal W_0,  \qquad \qquad 
\tilde B_{\delta+\alpha_1} = \mathcal W_1 - 
\frac{q \lbrack {\tilde B}_{\delta}, \mathcal W_0 \rbrack}{(q-q^{-1})(q^2-q^{-2})},
\\
&
\tilde B_{n \delta+\alpha_1} = \tilde B_{(n-2)\delta+\alpha_1}
- 
\frac{q \lbrack \tilde B_{\delta}, \tilde B_{(n-1)\delta+\alpha_1}\rbrack}{(q-q^{-1})(q^2-q^{-2})} \qquad \qquad n\geq 2.
\end{align*}
Moreover for $n\geq 1$,
\begin{align*}
\tilde B_{n \delta} &= 
q^{-2}  \tilde B_{(n-1)\delta+\alpha_1} \mathcal W_1
- \mathcal W_1 \tilde B_{(n-1)\delta+\alpha_1}  + 
(q^{-2}-1)\sum_{\ell=0}^{n-2} \tilde B_{\ell \delta+\alpha_1}
\tilde B_{(n-\ell-2) \delta+\alpha_1}
\\
&=q^{-2} \mathcal W_0 \tilde B_{(n-1)\delta+\alpha_0} 
-  \tilde B_{(n-1)\delta+\alpha_0} \mathcal W_0 + 
(q^{-2}-1)\sum_{\ell=0}^{n-2} \tilde B_{\ell \delta+\alpha_0}
\tilde B_{(n-\ell-2) \delta+\alpha_0}.
\end{align*}

\noindent  We emphasize that the elements 
\begin{align*}
\lbrace {\tilde B}_{n \delta+ \alpha_0} \rbrace_{n=0}^\infty,
\qquad \quad 
\lbrace {\tilde B}_{n \delta+ \alpha_1} \rbrace_{n=0}^\infty,
\qquad \quad 
\lbrace {\tilde B}_{n \delta} \rbrace_{n=1}^\infty
\end{align*}
are contained in $\langle \mathcal W_0, \mathcal W_1 \rangle$.

\begin{proposition}
\label{prop:WWaltv} For $n \in \mathbb N$ the following hold in $\mathcal O_q$:
\begin{align*}
\mathcal W_{n+1} &=-(q-q^{-1})^{-1} \sum_{k=0}^n \sum_{\ell=0}^k \binom{k}{\ell} q^{k-2 \ell} \lbrack 2 \rbrack^{-k-2}_q \tilde B_{(k-2\ell)\delta + \alpha_0} \mathcal  G_{n-k};
\\
\mathcal W_{-n} &=  -(q-q^{-1})^{-1}  \sum_{k=0}^n \sum_{\ell=0}^k \binom{k}{\ell} q^{2 \ell-k} \lbrack 2 \rbrack^{-k-2}_q \tilde B_{(k-2\ell)\delta + \alpha_1} \mathcal  G_{n-k}.
\end{align*}
\end{proposition}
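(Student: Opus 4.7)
The plan is to obtain Proposition~\ref{prop:WWaltv} as a direct $\sigma$-image of Proposition~\ref{prop:WWalt}, in complete parallel with the way Proposition~\ref{thm:m1com} is obtained from Theorem~\ref{thm:m1Int} and Proposition~\ref{prop:m2Com} is obtained from Theorem~\ref{thm:m2}. First I would recall, from Lemma~\ref{lem:autA}, that $\sigma$ is an algebra automorphism of $\mathcal O_q$ satisfying $\sigma(\mathcal W_{-k}) = \mathcal W_{k+1}$, $\sigma(\mathcal W_{k+1}) = \mathcal W_{-k}$, and $\sigma(\mathcal G_{k+1}) = \mathcal{\tilde G}_{k+1}$, $\sigma(\mathcal{\tilde G}_{k+1}) = \mathcal G_{k+1}$.

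Next I would observe that by construction the elements $\tilde B_{n\delta+\alpha_0}$, $\tilde B_{n\delta+\alpha_1}$, $\tilde B_{n\delta}$ are defined precisely as the $\sigma$-images of $B_{n\delta+\alpha_0}$, $B_{n\delta+\alpha_1}$, $B_{n\delta}$. In particular, since $\sigma$ is an algebra homomorphism, the recursions \eqref{eq:line1}--\eqref{eq:line4}, \eqref{eq:Bdelta}, \eqref{eq:Bdel2} push forward to the recursions for the tilde-elements displayed just before the statement of the proposition, and $\sigma(B_{(k-2\ell)\delta+\alpha_i}) = \tilde B_{(k-2\ell)\delta+\alpha_i}$ for $i \in \{0,1\}$. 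It then remains only to apply $\sigma$ termwise to both equations of Proposition~\ref{prop:WWalt}.

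Concretely, applying $\sigma$ to each side of \eqref{eq:recWnm1} yields
\begin{align*}
\mathcal W_{n+1} &= -(q-q^{-1})^{-1} \sum_{k=0}^n \sum_{\ell=0}^k \binom{k}{\ell} q^{k-2\ell} \lbrack 2\rbrack_q^{-k-2}\, \tilde B_{(k-2\ell)\delta+\alpha_0}\, \mathcal G_{n-k},
\end{align*}
and applying $\sigma$ to each side of \eqref{eq:recWnm2} yields
\begin{align*}
\mathcal W_{-n} &= -(q-q^{-1})^{-1} \sum_{k=0}^n \sum_{\ell=0}^k \binom{k}{\ell} q^{2\ell-k} \lbrack 2\rbrack_q^{-k-2}\, \tilde B_{(k-2\ell)\delta+\alpha_1}\, \mathcal G_{n-k},
\end{align*}
which are the two identities claimed. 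The scalar coefficients and the multiplicative ordering of the two factors are preserved because $\sigma$ is $\mathbb F$-linear and multiplicative.

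Since every step is an invocation of a previously established symmetry, there is no real obstacle here; the only thing to be careful about is bookkeeping of the indices $\alpha_0$ versus $\alpha_1$ under $\sigma$, which is settled by the fact that $\sigma$ interchanges $\mathcal W_0$ and $\mathcal W_1$ (up to the sign shift on the subscripts encoded in Lemma~\ref{lem:autA}) and so sends the family $\{B_{n\delta+\alpha_i}\}$ to the family $\{\tilde B_{n\delta+\alpha_i}\}$ with the same $i$, by the very definition of the tilde elements given in the text.
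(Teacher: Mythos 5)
Your proof is correct and is essentially identical to the paper's: the paper's proof of Proposition \ref{prop:WWaltv} is simply ``Apply $\sigma$ everywhere in Proposition \ref{prop:WWalt}, and use Lemma \ref{lem:autA}.'' Your added observation that $\sigma(B_{(k-2\ell)\delta+\alpha_i})=\tilde B_{(k-2\ell)\delta+\alpha_i}$ holds by the very definition of the tilde elements (including for negative subscripts $k-2\ell$, via \eqref{eq:extend}) is exactly the bookkeeping the paper leaves implicit.
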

\begin{proof} Apply $\sigma$ everywhere in Proposition \ref{prop:WWalt}, and use Lemma \ref{lem:autA}.
\end{proof}

\begin{proposition}
\label{prop:WWaltav} For $n \in \mathbb N$ the following hold in $\mathcal O_q$:
\begin{align*}
\mathcal W_{n+1} &=  -(q-q^{-1})^{-1}  \sum_{k=0}^n \sum_{\ell=0}^k \binom{k}{\ell} q^{2 \ell-k} \lbrack 2 \rbrack^{-k-2}_q 
 \mathcal {G}_{n-k} \tilde B_{(k-2\ell)\delta + \alpha_0};
\\
\mathcal W_{-n} &=-(q-q^{-1})^{-1} \sum_{k=0}^n \sum_{\ell=0}^k \binom{k}{\ell} q^{k-2 \ell} \lbrack 2 \rbrack^{-k-2}_q  \mathcal  G_{n-k} \tilde B_{(k-2\ell)\delta + \alpha_1}.
\end{align*}
\end{proposition}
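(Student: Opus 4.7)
The plan is to derive Proposition \ref{prop:WWaltav} by applying the automorphism $\sigma$ of $\mathcal O_q$ (from Lemma \ref{lem:autA}) to both sides of the two identities in Proposition \ref{prop:WWalta}, mirroring exactly how Proposition \ref{prop:WWaltv} was obtained from Proposition \ref{prop:WWalt}. This is the natural thing to try, since the only difference between Propositions \ref{prop:WWalt}/\ref{prop:WWalta} and Propositions \ref{prop:WWaltv}/\ref{prop:WWaltav} is that $\mathcal{\tilde G}$ has been replaced by $\mathcal G$ and $B$ has been replaced by $\tilde B$, which is precisely the effect of $\sigma$.

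First, I would start with the first equation of Proposition \ref{prop:WWalta}, which expresses $\mathcal W_{-n}$ as a linear combination of products $\mathcal {\tilde G}_{n-k} B_{(k-2\ell)\delta + \alpha_0}$. Applying $\sigma$ and using that it is an algebra homomorphism fixing $\mathbb F$, each scalar coefficient is preserved, and the product becomes $\sigma(\mathcal{\tilde G}_{n-k}) \sigma(B_{(k-2\ell)\delta + \alpha_0})$. By Lemma \ref{lem:autA}, $\sigma(\mathcal W_{-n}) = \mathcal W_{n+1}$ and $\sigma(\mathcal{\tilde G}_{n-k}) = \mathcal G_{n-k}$ (with the $k=n$ case handled by \eqref{eq:GG0}, since $\mathcal{\tilde G}_0 = \mathcal G_0$ is a scalar). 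By the definition of $\tilde B_{m\delta+\alpha_0}$ as the $\sigma$-image of $B_{m\delta+\alpha_0}$, we obtain $\sigma(B_{(k-2\ell)\delta+\alpha_0}) = \tilde B_{(k-2\ell)\delta+\alpha_0}$; note this holds even for negative indices $k-2\ell < 0$, since the extension rule \eqref{eq:extend} is $\sigma$-equivariant. Putting this together yields the first equation of Proposition \ref{prop:WWaltav}. Applying $\sigma$ to the second equation of Proposition \ref{prop:WWalta} produces the second equation of Proposition \ref{prop:WWaltav} in exactly the same manner.

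There is no real obstacle here; the argument is a one-line application of $\sigma$, entirely parallel to the proof already given for Proposition \ref{prop:WWaltv}. The only bookkeeping point worth verifying is that the boundary term $k=n$ (where the factor $\mathcal {\tilde G}_{n-k}$ becomes $\mathcal {\tilde G}_0$) is handled correctly, but this is immediate from \eqref{eq:GG0}. Accordingly, the proof in the paper need only say: apply the automorphism $\sigma$ to each side of the identities in Proposition \ref{prop:WWalta}, and evaluate the results using Lemma \ref{lem:autA} together with the definitions of $\tilde B_{n\delta+\alpha_0}$, $\tilde B_{n\delta+\alpha_1}$.
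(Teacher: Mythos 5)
Your proposal is correct and is essentially identical to the paper's proof, which simply applies the automorphism $\sigma$ to each side of the identities in Proposition \ref{prop:WWalta} and evaluates using Lemma \ref{lem:autA} together with the definition of the $\tilde B$'s as $\sigma$-images. The extra bookkeeping you note (the $k=n$ boundary term via \eqref{eq:GG0} and the $\sigma$-equivariance of the negative-index convention \eqref{eq:extend}) is accurate, though the paper leaves it implicit.
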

\begin{proof} Apply $\sigma$ everywhere in Proposition \ref{prop:WWalta}, and use Lemma \ref{lem:autA}.
\end{proof}

\section{Suggestions for future research}

\noindent In this section we give some suggestions for future research. Recall the $q$-Onsager algebra $O_q$
and its alternating central extension $\mathcal O_q$.
\medskip

\begin{problem}\label{prob4}\rm
 Display an  algebra isomorphism from  $\mathcal O_q$ to a coideal subalgebra of  $U_q(\widehat{\mathfrak{gl}}_2)$.
\end{problem}

\begin{problem}\label{prob:one} \rm Assume that $\mathbb F$ is algebraically closed, and
let $V$ denote a finite-dimensional irreducible $\mathcal O_q$-module on which
$\mathcal W_0$, $\mathcal W_1$ are diagonalizable. It follows from \cite[Theorem~3.10]{qSerre} that $\mathcal W_0$, $\mathcal W_1$ act on $V$ as a tridiagonal pair.
This means that:
\begin{enumerate}
\item[\rm (i)] there exists an ordering $\lbrace V_i \rbrace_{i=0}^d$ of the eigenspaces of $\mathcal W_0$ on $V$ such that
\begin{align*}
\mathcal W_1 V_i \subseteq  V_{i-1} +V_i + V_{i+1} \qquad \quad (0 \leq i \leq d),
\end{align*}
\noindent where $V_{-1}=0$ and $V_{d+1}=0$;
\item[\rm (ii)] there exists an ordering $\lbrace V'_i \rbrace_{i=0}^d$ of the eigenspaces of $\mathcal W_1$ on $V$ such that
\begin{align*}
\mathcal W_0 V'_i \subseteq V'_{i-1} +V'_i + V'_{i+1} \qquad \quad (0 \leq i \leq d),
\end{align*}
\noindent where $V'_{-1}=0$ and $V'_{d+1}=0$.
\end{enumerate}
Show that for $k \in \mathbb N$ and $0 \leq i \leq d$,
\begin{align*}
\mathcal W_{-k} V_i &\subseteq  V_i,  \qquad \qquad \qquad \qquad \qquad
\mathcal W_{-k} V'_i \subseteq  V'_{i-1} +V'_i + V'_{i+1}, \\
\mathcal W_{k+1} V_i &\subseteq  V_{i-1} +V_i + V_{i+1},  \qquad \quad \;\;
\mathcal W_{k+1} V'_i \subseteq  V'_i, \\
\mathcal G_{k+1} V_i &\subseteq  V_{i-1} +V_i + V_{i+1},\qquad \qquad 
\mathcal G_{k+1} V'_i \subseteq  V'_{i-1} +V'_i + V'_{i+1}, \\
\mathcal {\tilde G}_{k+1} V_i &\subseteq  V_{i-1} +V_i + V_{i+1}, \qquad \qquad 
\mathcal {\tilde G}_{k+1} V'_i \subseteq  V'_{i-1} +V'_i + V'_{i+1}.
\end{align*}
\noindent See    \cite{basXXZ, BK05} along with \cite[Line~(14)]{bas4} and
\cite[Section~2.3]{basKoi} 
for explicit examples and relevant information.
\end{problem}

\begin{problem}\rm With reference to Problem \ref{prob:one}, assume that  $V_i$ and $V'_i$ have dimension one for $0 \leq i \leq d$.
Show that for $k \in \mathbb N$ the map
$\mathcal W_{-k} $ (resp. $\mathcal W_{k+1} $)
(resp. $\mathcal G_{k+1} $)
(resp. $\mathcal {\tilde G}_{k+1} $) 
acts on $V$ as a linear combination of $\mathcal W_0$ and $I$
(resp. $\mathcal W_1$ and $I$)
(resp. $\lbrack \mathcal W_1, \mathcal W_0\rbrack_q$ and $I$)
(resp. $\lbrack \mathcal W_0, \mathcal W_1\rbrack_q $ and $I$).
See \cite[Section~2.1.1]{BK05} and \cite[Section~7]{conj} for partial results.
\end{problem}

\begin{problem}\label{prob:six}\rm
 Assume that $\mathbb F$ is algebraically closed, and let $V$ denote a finite-dimensional irreducible $\mathcal O_q$-module on which
$\mathcal W_0$, $\mathcal W_1$ are diagonalizable.
Examples show that the alternating $\mathcal O_q$-generators do not necessarily act on $V$ in a diagonalizable way. Investigate
the Jordan canonical form for these actions. How does the Bockting double lowering operator 
\cite{bockting, bocktingQexp}
act on the Jordan blocks?
\end{problem}

\noindent To motivate the next problem, recall from 
\eqref{eq:3p4},
\eqref{eq:3p10}
that each of the following (i)--(iv) mutually commute:
(i) $\lbrace \mathcal W_{-k}\rbrace_{k\in \mathbb N}$;
(ii) $\lbrace \mathcal W_{k+1}\rbrace_{k\in \mathbb N}$;
(iii) $\lbrace \mathcal G_{k+1}\rbrace_{k\in \mathbb N}$;
(iv) $\lbrace \mathcal {\tilde G}_{k+1}\rbrace_{k\in \mathbb N}$.
\begin{problem}\label{prob:two} \rm Assume that $\mathbb F$ is algebraically closed, and let $V$ denote a finite-dimensional irreducible $\mathcal O_q$-module.
 Investigate the common eigenvectors for the actions of
$\lbrace \mathcal W_{-k}\rbrace_{k\in \mathbb N}$ 
(resp. $\lbrace \mathcal W_{k+1}\rbrace_{k\in \mathbb N}$)
(resp. $\lbrace \mathcal G_{k+1}\rbrace_{k\in \mathbb N}$)
(resp. $\lbrace \mathcal {\tilde G}_{k+1}\rbrace_{k\in \mathbb N}$)
on $V$. 
\end{problem}

\begin{problem} \label{prob3} \rm
Describe the subalgebra of $\mathcal O_q$ consisting of the elements that commute with $\mathcal W_0$.
\end{problem}

\begin{problem} \label{prob3a} \rm
Describe the subalgebra of $\mathcal O_q$ consisting of the elements that commute with $\lbrack \mathcal W_0, \mathcal W_1\rbrack_q$.
\end{problem}

\begin{problem}\rm Describe the subalgebra of $\mathcal O_q$ generated by $\lbrace \mathcal W_{-k}\rbrace_{k\in \mathbb N}$ and  $\lbrace \mathcal W_{k+1}\rbrace_{k\in \mathbb N}$.
\end{problem}

\begin{problem}\rm Describe the subalgebra of $\mathcal O_q$ generated by  $\lbrace \mathcal G_{k+1}\rbrace_{k\in \mathbb N}$ and  $\lbrace \mathcal {\tilde G}_{k+1}\rbrace_{k\in \mathbb N}$.
\end{problem}

\section{Acknowledgements}
The author thanks Pascal Baseilhac and Travis Scrimshaw for helpful conversations about the $q$-Onsager algebra and its alternating central extension.


%

\bigskip

\noindent Paul Terwilliger \hfil\break
\noindent Department of Mathematics \hfil\break
\noindent University of Wisconsin \hfil\break
\noindent 480 Lincoln Drive \hfil\break
\noindent Madison, WI 53706-1388 USA \hfil\break
\noindent email: {\tt terwilli@math.wisc.edu }\hfil\break

\end{document}